\author[M.~Caprio and S.~Mukherjee]{Michele Caprio and Sayan Mukherjee}
\address{PRECISE Center, Dept. of Computer and Information Science,
 University of Pennsylvania, 3330 Walnut Street, Philadelphia, PA}
\email{caprio@seas.upenn.edu}
\urladdr{\url{https://michelecaprio.wixsite.com/caprio}}   
\address{Department of Computer Science, Universität Leipzig, Paulinum
Augustusplatz 10, Leipzig, Germany 04109}
\email{sayan.mukherjee@mis.mpg.de}
\urladdr{\url{https://sayanmuk.github.io/}}
\keywords{Extended probability measures; Philosophy of probability; Foundations of probability; Foundations of statistics.}
\subjclass[2010]{60A05, 62A01.}
\title{Extended probabilities and their application to statistical inference}
\newcommand{\vertiii}[1]{{\left\vert\kern-0.25ex\left\vert\kern-0.25ex\left\vert #1 
    \right\vert\kern-0.25ex\right\vert\kern-0.25ex\right\vert}}
   \def\MR#1{}
\theoremstyle{definition} 
\let\olddefi\defi
\renewcommand{\defi}{\olddefi\normalfont}
\let\oldrmk\rmk
\renewcommand{\rmk}{\oldrmk\normalfont}
\newtheorem{theorem}{Theorem}
\newtheorem{proposition}[theorem]{Proposition}
\newtheorem{corollary}[theorem]{Corollary}
\newtheorem{definition}[theorem]{Definition}
\newtheorem{remark}[theorem]{Remark}
\newtheorem{example}[theorem]{Example}
\providecommand{\MR}[1]{}
\providecommand{\MR}{\relax\ifhmode\unskip\space\fi MR }
\providecommand{\href}[2]{#2}
\newcommand\xqed[1]{%
  \leavevmode\unskip\penalty9999 \hbox{}\nobreak\hfill
  \quad\hbox{#1}}
\newcommand\demo{\xqed{$\triangle$}}
\begin{document}

\maketitle
\thispagestyle{empty}

\begin{abstract}
We propose a new, more general definition of extended probability measures. We study their properties and provide a behavioral interpretation. We put them to use in an inference procedure, whose environment is canonically represented by the probability space $(\Omega,\mathcal{F},P)$, when both $P$ and the composition of $\Omega$ are unknown. We develop an ex ante analysis -- taking place before the statistical analysis requiring knowledge of $\Omega$ -- in which the true composition of $\Omega$ is progressively learned. We describe how to update extended probabilities in this setting, and introduce the concept of lower extended probabilities. We apply our findings to a species sampling problem and to the study of the boomerang effect (the empirical observation that sometimes persuasion yields the opposite effect: the persuaded agent moves their opinion away from the opinion of the persuading agent).
\end{abstract}

\section{Introduction}
Researchers use the terminology ``extended probabilities'' to refer to set functions whose codomain is either a superset of $[0,1]$, or defined using entirely different number types, such as $p$-adic numbers  \cite{khrennikov}. They first came up in physics (noticed by \cite{dirac} and \cite{heisenberg}), where they are still studied today \cite{ferrie,hartle2,kronz}. They then became popular in other fields too, including economics and finance \cite{burgin3, jarrow}, machine learning \cite{lowe}, stochastic processes \cite{wiewiora}, and queuing theory \cite{tijms}. Of course, they have been extensively studied in mathematics (see e.g. \cite{allen} and \cite{bartlett} for early works, and \cite{burgin2} and \cite{khrennikov} for more recent ones). A complete account is given in \cite{burgin1}. 

``Extended probabilities'' have been given differing definitions and interpretations across -- and even within -- fields of study.  In quantum theory, for example, \cite{benavoli} point out that ``negative probabilities'' do not have intrinsic meaning beyond the fact that they constitute a probabilistic model compatible with quantum events.  In \cite{gell-mann, hartle1, hartle2}, instead,  the authors interpret them as being associated with unsettleable bets (see section \ref{rel_literature}).



In this paper, we aim to give a foundational definition of extended probabilities. We give two properties 
that a set function must have in order to be called an extended probability, irrespective of the field the scholar works in. This represents a great improvement with respect to previous works on the matter, in which definitions depend on the area of study. 

We explain how extended probabilities are different from regular Kolmogorovian probabilities, and we characterize some of their more interesting properties. We also give a behavioral interpretation of extended probabilities taking on negative values that complements the frequentist one given in \cite{burgin1} to ``negative probabilities''. Finally, we relate our definition and interpretation to the ones existing in the literature, and we illustrate how these latter can be reconciled with the ones we provide.

After that, we present what is to the best of our knowledge the first application of extended probabilities to an inference procedure. Consider a generic statistical experiment; let $\Omega$ be a finite or countable space, and endow it with the sigma-algebra $\mathcal{F} = 2^\Omega$. The space usually adopted to express uncertainty around the elements of $\mathcal{F}$ is the probability space $(\Omega,\mathcal{F},P)$, for some probability measure $P:\mathcal{F} \rightarrow [0,1]$. Now, suppose we want to express further uncertainty regarding either the composition of $\Omega$, or which $P$ to consider on $(\Omega,\mathcal{F})$; we can do so by using lower probabilities. In particular, in the first case, we consider the probability space $(\Omega,\mathcal{F},P)$ and we follow the example in \cite{gong2}. There, as a consequence of survey nonresponse (that is, subjects not answering to the questions in a survey), the author is forced to consider $\check{\Omega}=2^\Omega$. In this case, probabilities cannot be computed exactly: only lower and upper bounds to precise probabilities -- lower and upper probabilities, respectively -- are available. In the second case, we consider the triple $(\Omega,\mathcal{F},\mathcal{P})$, where $\mathcal{P}$ is a set of probability measures, and we proceed e.g. as in \cite{prob.kin,prob.kin.ergo}.

To the best of our knowledge, there is no cogent way of expressing uncertainty on {both} the composition of $\Omega$ and which $P$ to consider on $(\Omega,\mathcal{F})$. In this work, we aim to fill this gap by using extended probabilities. We describe an ex ante analysis, meaning one that takes place before the actual statistical analysis that requires the knowledge of the state space. In the most general case, we start from the number type we believe we are working with (naturals, wholes, integers, or rationals), and call it $\Omega$. Then, at time $0$ we divide it into an actual space $\Omega^+_0$ that we deem a plausible state space for our experiment, and a latent one $\Omega_0^-$, which we do not know about; notationally, $\Omega=\Omega_0^- \sqcup \Omega_0^+$, where $\sqcup$ denotes a disjoint union of sets. We assign negative extended probabilities to the subsets of $\Omega_0^-$. To capture the uncertainty around which $P$ to consider, we specify a set $\mathcal{P}^{ex}$ of extended probabilities supported on the whole $\Omega$, instead of a single one. We then describe how we progressively discover the true composition of the state space associated with our experiment (which may be the whole $\Omega$ we initially specified, or a proper subset $\Omega^\prime \subsetneq \Omega$), and how to update extended probabilities accordingly. We conclude our analysis by discovering the  state space associated with our experiment. 

In addition, we show that the limiting set of the sequence $(\mathcal{P}^{ex}_t)$ of updates of the initially-specified set of extended probability measures must be one of the following. It is either a set of regular probability measures, if the state space associated with our experiment is the whole $\Omega$, or a set of extended probability measures if the state space associated with our experiment is a proper subset $\Omega^\prime$ of $\Omega$. In this latter case, the limiting set induces a set of regular probability measures on $\Omega^\prime$.

We also develop the concept of lower and upper extended probabilities. They represent the ``boundaries'' of a generic set $\mathcal{P}^{ex}$ of extended probabilities, and more in general allow for an imprecise elicitation of extended probabilities. They are extremely important because under a mild assumption, knowing lower probability $\underline{P}^{ex}$ is enough to be able to retrieve the whole set $\mathcal{P}^{ex}$. We provide bounds for the lower extended probability $\underline{P}^{ex}_t(A)$ of any element $A \in \mathcal{F}$, at any time $t$ in our ex ante analysis. For the sake of completeness, we  give the definition of an extended Choquet capacity, and we show how lower and upper extended probabilities are extended Choquet capacities.

In Example \ref{ex_species}, we provide a simple application to the field of ecology. We illustrate how the analysis we describe in the paper can be put to use in a species sampling problem, specifically to retrieve the number of birds that inhabit a certain region throughout the year.


We also provide an example -- adapted from the one in \cite{allah} -- in the field of opinion dynamics; in particular, we describe the boomerang effect. We have a persuading agent acting on a persuaded agent, but the latter perceives the former as having low credibility. This can be modeled so that the persuaded agent does not know the composition of the entire state space, while she suspects the persuading agent does: she thinks the persuading agent may be hiding something form her. As she discovers the true composition of the state space, the credibility of the persuading agent is restored.

This paper is organized as follows: in section \ref{extended} we give the foundational definition of an extended probability, its properties, and its behavioral interpretation. In section \ref{main} we use extended probabilities in an inference procedure. Section \ref{ulep} deals with lower extended probabilities, and section \ref{conclusion} concludes our work. In appendix \ref{proofs}, we give the proofs of our results. In appendix \ref{opdin} we give the opinion dynamics example, and in appendix \ref{definetti} we report two interesting quotes from \cite{definetti1}.

\begin{remark}\label{algebra}
To deal with uncertainty in the composition of $\Omega$, one could proceed as in \cite[section 4.3.3]{walley}. The agent could begin the elicitation by specifying the state space $\Omega_0$ and then, as they analyze the problem in greater detail, could realize that a refinement to a finer-grained $\Omega_1$ is needed. This corresponds to specifying what \cite{dempster} calls a multivalued mapping from $\Omega_0$ to $\Omega_1$; that is,
$$A:\Omega_0 \rightrightarrows \Omega_1, \quad \omega_0 \mapsto A(\omega_0) \subset \Omega_1.$$
So $\Omega_0$ corresponds to a partition of $\Omega_1$, and each state $\omega_0 \in \Omega_0$ can be identified with the set $A(\omega_0)$ of ``refined possibilities'' in $\Omega_1$. To illustrate the complication deriving from this approach, let $\Omega_0$ and $\Omega_1$ be finite or countable, and call $\mathcal{F}_0=2^{\Omega_0}$ and $\mathcal{F}_1=2^{\Omega_1}$. If the agent specifies a probability measure $P_0$ on $(\Omega_0,\mathcal{F}_0)$, they then need to come up with a probability measure $P_1$ on $(\Omega_1,\mathcal{F}_1)$ such that
$$P_0(\{\omega_0\})=\sum_{\omega_1 \in A(\omega_0)} P_1(\{\omega_1\})$$
holds for all $\omega_0 \in \Omega_0$. This means that a new (subjective) probability elicitation must take place once the agent refines the state space to $\Omega_1$. This can be avoided using extended probabilities, as we shall argue in section \ref{main}. 

It is worth noting that while this multivalued mapping approach is pointed out in \cite{dempster,walley}, the authors sidestep its associated complications by means of probability bounding. Furthermore, in \cite{walley_bom} too the author tries to solve the incompletely specified possibility space problem by probability bounding; this enables progressive learning of the sample space in a way that  representation invariance as well as symmetry are satisfied. We will verify whether the model that we introduce in section \ref{main} meets these conditions in future work.
\end{remark}

\section{Extended probability measures}\label{extended}

In this section, we first illustrate the philosophical reason to introduce extended probabilities, and then we dive into more technical details. We conclude with a thorough analysis on how our interpretation of extended probabilities relates to the existing literture.
\subsection{Philosophical motivation for extended probabilities}\label{philo}
We give a behavioral interpretation of extended probabilities. 
Consider a generic event $A$, and suppose we want to express our belief about the likelihood of it taking place. Suppose we can enter a bet about $A$ that gives us \$$1$ if event $A$ happens and \$$0$ if it does not happen. Then, we say that the probability we attach to $A$ is given by
$p \geq 0$, 
the supremum buying price as well as the infimum selling price for the bet about $A$ (we call it \textit{fair price}). That is, for every $\varepsilon>0$, we are willing to enter a bet which gives $1-p+\varepsilon$ if $A$ happens, and $-p+\varepsilon$ if $A$ does not happen, as well as the bet which gives $p-1+\varepsilon$ if $A$ happens and $p+\varepsilon$ if $A$ does not happen. 
This interpretation is inspired by the classical subjective probability interpretation given by de Finetti in \cite{definetti1,definetti2}. As pointed out in \cite{nau}, 
the other two fathers of subjective probability theory, Ramsey \cite{ramsey} and Savage \cite{savage} simultaneously introduced measurement schemes for utility. They tied their definitions of probability to bets in which the payoffs were effectively measured in utiles rather than dollars. In this way, they obtained probabilities that were interpretable as measures of pure belief, uncontaminated by marginal utilities for money. De Finetti later admitted that it might have been better to adopt the seemingly more general approach of Ramsey and Savage, since it leads to a theory of decision-making that does not rely on monetary values. Nevertheless, he found other reasons for preferring the money bet approach. In particular, he maintained that it would be extremely difficult to settle bets based on utiles, because the monetary sums needed to settle them would need to be adjusted to the complex variations in a unit of measure (utiles) that is unobservable.\footnote{The complete quotes from \cite{definetti1} can be found in Appendix \ref{definetti}.} This is why 
we retain the DeFinettian interpretation of (subjective) probability, and more in general why we adopt a betting scheme approach. 

Suppose now that we are not given the possibility to enter a bet like the aforementioned one, so we cannot assess a probability for $A$ as before. Instead, such a possibility is given to our doppelgänger, who tells us that their subjective assessment for the probability of $A$ is some $q \in [0,1]$. Here, it is assumed that the doppelgänger assigns the same 
probabilities as ourselves to all the events we both can enter a bet about. 
This procedure of asking the doppelgänger is equivalent to setting the probability of $A$ ourselves; we introduce the doppelgänger -- a logical artifice -- because, given our interpretation of probability, it is impossible to elicit the probability of event $A$ if we cannot enter a bet similar to the one described before. We conclude that if we were given the opportunity to enter the bet about $A$, the amount of money we would deem fair to pay would be $q$ dollars. Therefore we are prepared to lose \$$q$ in the case $A^c$ happens.\footnote{Provided that we gain \$$(1-q)$ if $A$ happens.} We express this by setting $p=-q$.


If after a while we are given the opportunity to enter a bet about an event $A$ whose extended probability we deemed to be $-q<0$, the price we consider fair to pay need not be $p=|-q|$. If $p\neq |-q|$, it means that we changed idea about how likely event $A$ is once given the possibility of entering the bet. If instead $p=|-q|$, we are obeying to Allen's {principle of conservation of knowledge} \cite{allen}. It states that, like the law of conservation of mass, knowledge is neither created nor destroyed, but only transformed; its total possible amount is constant. By having us choose the probability equal in absolute value, we comply with conservation of knowledge. As we can see, this principle allows us to mechanically retrieve positive probabilities starting from negative ones. It also allows us to work backwards to negative probabilities starting from positive ones. Suppose we have an event $A \in \mathcal{F}$ to which we attach probability $p\geq 0$. 
Then, we know that if we were not given the opportunity to enter the bet that allowed us to indicate $p$ as the probability that event $A$ happens, then we would have expressed our uncertainty by assigning $A$ probability $-p\leq 0$.

A {Dutch book} is, informally, the possibility of constructing a bet such that the bookmaker always profits, while the punter always loses money (see Remark \ref{rem_dutch} for a formal definition). In \cite{definetti3}, a {coherent} subjective probability is defined as one that does not allow for a Dutch book to be made against the punter, however a bet is made. Necessary and sufficient conditions for coherence require that subjective probabilities satisfy the Kolmogorovian axioms of probability (with only finite additivity). As we shall see in section \ref{technical}, this does not hold for extended probabilities. This is not a problem though, in light of the interpretation we give to negative probabilities. The events whose attached probabilities are negative are events for which the punter cannot enter a bet; hence, they cannot be used to build a Dutch book. We give the definition of coherence in the context of extended probabilities and the formal statement that extended probabilities are always coherent in Remark \ref{rem_dutch}. In addition, as we show in section \ref{main}, the inferential procedure we consider is such that, starting with extended probabilities, we recover -- at the end of our analysis -- regular probabilities. This also ensures that no Dutch books can be created.

\subsection{Technical definition and properties}\label{technical}
Consider a measurable space $(\Omega,\mathcal{F})$, where $\mathcal{F}\subset 2^\Omega$ is a sigma-algebra.  
An extended probability $P^{ex}:\mathcal{F} \rightarrow \mathbb{R}$ is a set function on $\mathcal{F}$ such that
\begin{itemize}
\item[(i*)] $P^{ex}(A) \in [-1,1]$, for all $A \in \mathcal{F}$;
\item[(ii*)] if $\{A_j\}_{j \in I}$ is a countable collection of disjoint events such that $\cup_{j \in I} A_j \in \mathcal{F}$, then
$$P^{ex} \left( \bigcup\limits_{j \in I} A_j \right) = \sum\limits_{j \in I} P^{ex}(A_j);$$
\item[(iii*)] given any partition $\mathcal{E}=\{E\}$ of $\Omega$ such that $E\in\mathcal{F}$, for all $E\in\mathcal{E}$, $\sum_{E\in\mathcal{E}}|P^{ex}(E)|=1$.
\end{itemize}
Condition (iii*) is needed in light of the interpretation we gave in section \ref{philo} to extended probabilities: if (iii*) were not to hold, the doppelgänger's opinion would be represented by a signed measure, not a probability measure.

Let $\mathcal{F}\subset 2^\Omega$ be the same sigma-algebra as before. The Kolmogorovian axioms for any regular probability measure $P:\mathcal{F} \rightarrow \mathbb{R}$ are the following
\begin{itemize}
\item[(K1)] $P(A) \in [0,1]$, for all $A \in \mathcal{F}$;
\item[(K2)] if $\{A_j\}_{j \in I}$ is a countable collection of disjoint events such that $\cup_{j \in I} A_j \in \mathcal{F}$, then $P ( \cup_{j \in I} A_j ) = \sum_{j \in I} P(A_j)$;\footnote{We consider the countably additive version of Kolmogorov axioms.}
\item[(K3)] $P(\Omega)=1$.
\end{itemize}
It is easy to see, then, how conditions (i*) and (iii*) are more general than (K1) and (K3), respectively. To this extent, extended probabilities are a generalization of the concept of regular probabilities. From its definition, we see that an extended probability is a finite signed measure. We call the triple $(\Omega,\mathcal{F},P^{ex})$ an \textit{extended probability space}.

Because we do not require $P^{ex}(\Omega)=1$, the extended probability of the complement of an event $A$ is given by
\begin{align}\label{eq1}
P^{ex}(A^c)=P^{ex}(\Omega \setminus A)=P^{ex}(\Omega)-P^{ex}(A).
\end{align}
Equation (\ref{eq1}) comes from (ii*); indeed, consider the disjoint sets $\Omega \setminus A$ and $\Omega \cap A=A$. Then, 
\begin{align*}
P^{ex}(\Omega)&=P^{ex}\left( \left[\Omega \setminus A \right]\sqcup A \right) =P^{ex}(\Omega \setminus A)+P^{ex}(A)\\ &\iff P^{ex}(\Omega \setminus A)=P^{ex}(\Omega)-P^{ex}(A).
\end{align*}

Equation (\ref{eq1}) ensures us that $P^{ex}(\emptyset)= P^{ex}(\Omega^c) = P^{ex}(\Omega)-P^{ex}(\Omega)=0$. We also have that $P^{ex}(\Omega)=\sum_{E_j \in \mathcal{E}} P^{ex}(E_j)$, where $\mathcal{E}=\{E_j\}$ is any finite or countable partition of $\Omega$. 

\begin{proposition}\label{prop1}
Extended probabilities have the adequacy property: for all $A,B \in \mathcal{F}$ such that $A=B$, then $P^{ex}(A)=P^{ex}(B)$. 
\end{proposition}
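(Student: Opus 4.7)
The plan is essentially to observe that $P^{ex}$ is defined as a set function on $\mathcal{F}$, so functional equality $P^{ex}(A)=P^{ex}(B)$ whenever $A=B$ is almost immediate. However, to make the argument non-vacuous and to tie the property to the axioms (i*)--(iii*) rather than to the bare definition of ``function,'' I would invoke countable additivity (ii*) together with the identity $P^{ex}(\emptyset)=0$ already derived from equation (\ref{eq1}).

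Concretely, I would first record the consequence of (ii*) and (\ref{eq1}) that $P^{ex}(\emptyset)=0$, which is available before the statement of the proposition. Then, assuming $A,B\in\mathcal{F}$ with $A=B$, I would write the trivial decomposition
\begin{equation*}
A = B \sqcup (A\setminus B) = B \sqcup \emptyset,
\end{equation*}
which is a disjoint union in $\mathcal{F}$ since $A=B$ forces $A\setminus B=\emptyset$. Applying (ii*) to this countable (in fact finite) disjoint union gives $P^{ex}(A)=P^{ex}(B)+P^{ex}(\emptyset)=P^{ex}(B)$, as desired.

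There is no real obstacle here; the result is a sanity check that the axioms (i*)--(iii*) are consistent with viewing $P^{ex}$ as a well-defined set function on $\mathcal{F}$. The only point requiring mild care is that $P^{ex}(\emptyset)=0$ must be established first, since without it one cannot conclude from (ii*) alone that adding an empty set to a disjoint union leaves the extended probability unchanged. That fact, however, is already in hand via equation (\ref{eq1}), so the argument is complete.
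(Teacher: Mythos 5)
Your proof is correct, and it reaches the conclusion by a direct computation where the paper argues by contradiction. You decompose $A=B\sqcup(A\setminus B)=B\sqcup\emptyset$ and apply (ii*) together with the previously established fact $P^{ex}(\emptyset)=0$; the paper instead assumes $P^{ex}(A)\neq P^{ex}(B)$, uses the decompositions $A=(A\setminus B)\sqcup(A\cap B)$ and $B=(B\setminus B)\sqcup(B\cap B)$ to deduce $P^{ex}(A\setminus B)\neq P^{ex}(B\setminus B)=P^{ex}(\emptyset)$, and then derives the contradiction $P^{ex}(\emptyset)\neq P^{ex}(\emptyset)$ from $A\setminus B=\emptyset$. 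The ingredients are identical -- finite additivity applied to a decomposition whose ``difference'' piece is empty, plus $P^{ex}(\emptyset)=0$ -- so the two arguments are really the same computation run forwards versus backwards; yours is shorter and avoids the detour through a negated hypothesis. One caveat applies equally to both: the step that replaces $P^{ex}(B\sqcup\emptyset)$ by $P^{ex}(A)$ (in your version) or $P^{ex}(A\setminus B)$ by $P^{ex}(\emptyset)$ (in the paper's) is itself an instance of the adequacy property, so neither proof is free of the tautological character you correctly flag in your opening remark; the proposition is ultimately a well-definedness sanity check, and your write-up is honest about that. No gap.
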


We also have that extended probabilities retain a version of the monotonic property of regular probabilities.

\begin{proposition}\label{prop2}
For all $A,B \in \mathcal{F}$ such that $A \subset B$, if $P^{ex}(A) \geq 0$, $P^{ex}(B) \geq 0$, and $P^{ex}(B\cap A^c) \geq 0$, then $P^{ex}(A) \leq P^{ex}(B)$. If instead $P^{ex}(A) \leq 0$, $P^{ex}(B) \leq 0$, and $P^{ex}(B \cap A^c) \leq 0$, then $P^{ex}(A) \geq P^{ex}(B)$.
\end{proposition}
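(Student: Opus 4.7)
The plan is to mimic the classical proof of monotonicity for Kolmogorovian probabilities, using only the countable additivity axiom (ii*). Since $A \subset B$, we have the disjoint decomposition $B = A \sqcup (B \cap A^c)$, where both $A$ and $B \cap A^c$ lie in $\mathcal{F}$ (the latter because $\mathcal{F}$ is a $\sigma$-algebra and $B \cap A^c = B \setminus A$). Applying (ii*) to this finite disjoint union gives
\begin{equation*}
P^{ex}(B) = P^{ex}(A) + P^{ex}(B \cap A^c).
\end{equation*}

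From here the two claims follow by a one-line sign argument. In the first case, the hypothesis $P^{ex}(B \cap A^c) \geq 0$ immediately yields $P^{ex}(B) \geq P^{ex}(A)$, i.e.\ $P^{ex}(A) \leq P^{ex}(B)$. In the second case, the hypothesis $P^{ex}(B \cap A^c) \leq 0$ yields $P^{ex}(B) \leq P^{ex}(A)$, i.e.\ $P^{ex}(A) \geq P^{ex}(B)$.

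There is essentially no obstacle: the proof is purely algebraic, relying only on (ii*). The sign assumptions on $P^{ex}(A)$ and $P^{ex}(B)$ themselves are not actually used in the derivation of the inequalities; they appear in the statement to ensure the comparison is \emph{meaningful} under the behavioral interpretation of Section \ref{philo}, so that one is comparing two quantities of the same sign (two genuine probabilities, or two reflections of probabilities assessed by the doppelgänger) rather than inadvertently drawing an inequality between a positive and a negative extended probability, which would be interpretively incongruous. Thus the only care required is to note that the disjoint decomposition $B = A \sqcup (B \cap A^c)$ is legitimate and that (ii*) applies to finite (in particular, two-element) disjoint families, which it does since finite additivity is a special case of countable additivity by appending empty sets (each of which has extended probability $0$, as observed right after equation (\ref{eq1})).
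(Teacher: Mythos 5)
Your proof is correct and follows exactly the paper's own argument: decompose $B=A\sqcup(B\cap A^c)$, apply (ii*) to get $P^{ex}(B)=P^{ex}(A)+P^{ex}(B\cap A^c)$, and conclude from the sign of $P^{ex}(B\cap A^c)$. Your added observation that only the sign of $P^{ex}(B\cap A^c)$ is actually used is accurate and a nice clarification, but the route is the same as the paper's.
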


This version of the monotonic property implies a version of the continuity property enjoyed by regular probabilities.
\begin{corollary}\label{cor2.1}
If we have a collection $\{A_j\}$ of elements of $\mathcal{F}$ such that
\begin{itemize}
\item[\textbullet] it is nested (i.e. $A_1 \supset A_2 \supset \cdots \supset A_n \supset \cdots$),
\item[\textbullet] $\cap_j A_j=\emptyset$,
\item[\textbullet] $P^{ex}(A_j)\geq 0$, for all $A_j$,
\end{itemize}
then $\lim_{j \rightarrow \infty} P^{ex}(A_j)=P^{ex}(\cap_j A_j)=0$. 
\end{corollary}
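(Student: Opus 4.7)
The plan is to reduce the claim to the countable additivity axiom (ii*) via a standard telescoping argument. Set $B_j := A_j \setminus A_{j+1}$ for $j \geq 1$. The nesting $A_1 \supset A_2 \supset \cdots$ makes the $B_j$'s pairwise disjoint, and the assumption $\cap_j A_j = \emptyset$ yields, for every $k \geq 1$,
\[
A_k \;=\; \bigsqcup_{j \geq k} B_j \;\in\; \mathcal{F}.
\]

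Applying (ii*) to $A_1$ gives $P^{ex}(A_1) = \sum_{j \geq 1} P^{ex}(B_j)$, so the series on the right converges to a finite real number in $[-1,1]$. Its tails therefore vanish:
\[
P^{ex}(A_k) \;=\; \sum_{j \geq k} P^{ex}(B_j) \;=\; P^{ex}(A_1) \;-\; \sum_{j=1}^{k-1} P^{ex}(B_j) \;\xrightarrow[k\to\infty]{}\; 0.
\]
The second equality claimed by the corollary is immediate from the remark right after equation (\ref{eq1}): $P^{ex}(\cap_j A_j) = P^{ex}(\emptyset) = 0$.

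The delicate point is to justify \emph{why} the above series converges. Since (ii*) must produce the same value irrespective of how one enumerates the disjoint pieces $B_j$, the sum $\sum_j P^{ex}(B_j)$ converges unconditionally, hence absolutely in $\mathbb{R}$; once this is in hand, the tail-to-zero step is elementary. I would flag to the reader that the hypotheses $P^{ex}(A_j) \geq 0$ are not actually used in this argument; they are precisely the conditions of Proposition \ref{prop2}, under which the sequence $\bigl(P^{ex}(A_j)\bigr)_j$ is additionally monotone nonincreasing, which is consistent with the identification of its limit as $0$.
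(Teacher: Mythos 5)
Your argument is correct, but it is genuinely different from the route the paper indicates. The paper offers no written proof of Corollary \ref{cor2.1}; it merely asserts that the statement is ``implied'' by the monotonicity result of Proposition \ref{prop2}. Your proof instead runs the standard continuity-from-above argument for (signed) measures directly off the countable additivity axiom (ii*): decompose $A_k=\bigsqcup_{j\geq k}(A_j\setminus A_{j+1})$, invoke (ii*), and let the convergent series' tails vanish. This buys two things the paper's framing does not. First, it actually identifies the limit as $0$: monotonicity alone (even granting it) only yields that $\bigl(P^{ex}(A_j)\bigr)_j$ is nonincreasing and bounded, hence convergent, but says nothing about the value of the limit --- some appeal to countable additivity is unavoidable, and you supply it. Second, you correctly observe that the sign hypotheses are never used; this is a sharper statement than the corollary itself, and it also quietly exposes that Proposition \ref{prop2} could not have been applied as stated anyway, since the corollary does not assume $P^{ex}(A_j\cap A_{j+1}^c)\geq 0$, which Proposition \ref{prop2} requires. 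Your remark on unconditional (hence absolute) convergence of $\sum_j P^{ex}(B_j)$ is the right way to make (ii*) well posed, though for the tail-to-zero step plain convergence of the series already suffices. In short: the proposal is a complete and correct proof, more self-contained than what the paper provides.
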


Extended probabilities satisfy the inclusion-exclusion principle.
\begin{proposition}\label{prop3ext}
The following is true: $$P^{ex}(A \cup B)=P^{ex}(A)+P^{ex}(B)-P^{ex}(A\cap B),$$
for all $A,B \in \mathcal{F}$.
\end{proposition}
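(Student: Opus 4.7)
The plan is to mimic the standard proof of inclusion-exclusion for regular probability measures, which relies only on additivity over disjoint unions; since axiom (ii*) gives extended probabilities exactly this kind of (countable, and therefore also finite) additivity, the argument goes through verbatim, with no appeal to positivity or to $P^{ex}(\Omega) = 1$.

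Concretely, first I would decompose $A \cup B$ into two disjoint pieces, $A \cup B = A \sqcup (B \cap A^c)$, both of which lie in $\mathcal{F}$ since $\mathcal{F}$ is a $\sigma$-algebra. Applying (ii*) to this disjoint union yields
\begin{equation*}
P^{ex}(A \cup B) = P^{ex}(A) + P^{ex}(B \cap A^c).
\end{equation*}
Next I would write $B$ itself as a disjoint union, $B = (A \cap B) \sqcup (B \cap A^c)$, and again apply (ii*) to get
\begin{equation*}
P^{ex}(B) = P^{ex}(A \cap B) + P^{ex}(B \cap A^c),
\end{equation*}
so that $P^{ex}(B \cap A^c) = P^{ex}(B) - P^{ex}(A \cap B)$. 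Substituting this into the first display gives the claimed identity.

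There is essentially no obstacle: the only thing worth checking is that (ii*), as stated for countable collections, implies finite additivity. This is immediate by padding a finite disjoint family with copies of $\emptyset$ and using that $P^{ex}(\emptyset) = 0$ (already established from (ii*) and (iii*) earlier in the excerpt via equation (\ref{eq1})). No sign considerations enter, because the derivation is purely algebraic and never uses that the summands are nonnegative — which is the whole point of observing that inclusion-exclusion survives the extension from Kolmogorovian to extended probabilities.
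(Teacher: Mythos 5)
Your proof is correct and follows essentially the same route as the paper's: both arguments rest purely on the finite additivity supplied by (ii*), applied to disjoint decompositions of $A \cup B$ and of $B$ (the paper splits $A \cup B$ into the three pieces $A \setminus B$, $A \cap B$, $B \setminus A$, while you use the equivalent two-piece split $A \sqcup (B \cap A^c)$). Your closing remark that no positivity or normalization is used is exactly the point, and your padding-with-$\emptyset$ justification of finite additivity is a detail the paper leaves implicit.
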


This implies immediately that, for any $A,B,C \in \mathcal{F}$, 
\begin{align*}
P^{ex}((A \cup B)\cap C)&=P^{ex}((A \cap C)\cup (B \cap C))\\&=P^{ex}(A\cap C)+P^{ex}(B\cap C)-P^{ex}(A\cap B\cap C),
\end{align*}
by Proposition \ref{prop3ext} and the De Morgan laws.

We define the extended conditional probability of $A$ given $B$ to be the extended counterpart of regular conditional probabilities,
\begin{align}\label{eq13}
P^{ex}(A \mid B):=\frac{P^{ex}(A\cap B)}{P^{ex}(B)} \in [-1,1],
\end{align}
for all $A,B \in \mathcal{F}$ such that $P^{ex}(B) \neq 0$.\footnote{Notice that in de Finetti's approach \cite{definetti1,definetti2} a similar formula follows from the ``called off bet'' interpretation of (regular) conditional probability. In the future, we plan to provide a direct behavioral motivation for \eqref{eq13}, in the spirit of de Finetti's work.} 

To avoid confusion arising from the sign, we say that $A,B \in \mathcal{F}$ are independent if and only if
$$|P^{ex}(A\cap B)|=|P^{ex}(A)\times P^{ex}(B)|.$$
This way of describing independence corresponds to the one given in \cite[section 2]{allen}.

\begin{remark}\label{rem_dutch}
Consider any extended probability space $(\Omega,\mathcal{F},P^{ex})$. Call $\mathscr{B}$ the sigma-algebra generated by the events $B$ in $\Omega$ that we can enter a bet about, that is, the sigma-algebra generated by the collection $\{B \subset \Omega: P^{ex}(B)\geq 0 \text{ and } B\cap C = \emptyset \text{, } \forall C \in\mathcal{F} \text{, } P^{ex}(C)<0\}$. Then, a \textit{Dutch book} is a finite collection $\{B_j\}_{j=1}^n \subset \mathscr{B}$ along with numbers $\{s_j\}_{j=1}^n \subset \mathbb{R}$ such that, 
\begin{align}\label{eq27}
\sup_{\omega\in\Omega} \mathfrak{f}(\omega):=\sup_{\omega\in\Omega} \left\lbrace{\sum_{j=1}^n s_j \left[ \mathbbm{1}_{B_j}(\omega)-P^{ex}(B_j) \right]}\right\rbrace<0.
\end{align}
Notice that the $s_j$'s are the payout for a winning bet on $B_j$, and $s_jP^{ex}(B_j)$ is the bet's fair buy-in. The inequality in \eqref{eq27} suggests that a bet can be built such that the bookmaker always gets a profit, while the punter always loses money. 
\begin{definition}\label{coherence}
Extended probability $P^{ex}$ is coherent if no Dutch books can be made against the punter.
\end{definition}
Then, we have the following important result.
\begin{theorem}\label{alw_coh}
$P^{ex}$ is always coherent.
\end{theorem}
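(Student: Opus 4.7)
The plan is to reduce Theorem \ref{alw_coh} to the classical de Finetti coherence argument by showing that on the bettable events $P^{ex}$ behaves like a genuine Kolmogorov probability. First I would observe that every generator of $\mathscr{B}$---any $B$ with $P^{ex}(B)\geq 0$ disjoint from every $C\in\mathcal{F}$ with $P^{ex}(C)<0$---is a \emph{positive set} for the signed measure $P^{ex}$: were there a measurable $A\subset B$ with $P^{ex}(A)<0$, the intersection $B\cap A = A$ would be nonempty, contradicting the disjointness clause in the definition of the generating family. Hence $P^{ex}$ is non-negative on every subset of any such $B$, and its restriction to unions of generators is a bona fide non-negative measure.

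Second, I would normalize using property (iii*). Applied to the trivial partition $\{\Omega\}$ it forces $|P^{ex}(\Omega)|=1$, while applied to the Hahn decomposition $\{\Omega^+,\Omega^-\}$ of $P^{ex}$, together with the first step, it yields $|P^{ex}|(\Omega)=1$. Thus the total-variation measure $\mu:=|P^{ex}|$ is a genuine Kolmogorov probability measure on $(\Omega,\mathcal{F})$ that coincides with $P^{ex}$ on every generator of $\mathscr{B}$. With these two ingredients in hand, the usual de Finetti argument closes the proof: if $(\{B_j\}_{j=1}^n,\{s_j\}_{j=1}^n)$ formed a Dutch book, then integrating $\mathfrak{f}$ against $\mu$ would give
\begin{align*}
\int_\Omega \mathfrak{f}\, d\mu \;=\; \sum_{j=1}^n s_j\bigl[\mu(B_j) - P^{ex}(B_j)\bigr] \;=\; 0,
\end{align*}
since $\mu(B_j)=P^{ex}(B_j)$ on each bettable $B_j$, while on the other hand $\mathfrak{f}(\omega)<0$ pointwise and $\mu$ a probability measure force $\int\mathfrak{f}\,d\mu<0$, a contradiction.

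The main obstacle is propagating the identification $\mu(B)=P^{ex}(B)$ from the generators of $\mathscr{B}$ to arbitrary $B_j\in\mathscr{B}$, since $\mathscr{B}$ is closed under complements and countable unions and the resulting events need not themselves be positive sets for $P^{ex}$. I would use countable additivity (ii*) to disjointify each $B_j$ into pieces contained in positive-set generators (on which $\mu$ and $P^{ex}$ coincide) plus pieces that carry zero mass for both $\mu$ and $P^{ex}$; this disjointification is the technical heart of the argument and is what makes the de Finetti-style integration legitimate despite the signed nature of $P^{ex}$ on the full $\sigma$-algebra $\mathcal{F}$.
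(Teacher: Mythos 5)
Your first two steps are sound and in places sharper than what the paper records: the observation that every generator of $\mathscr{B}$ is a positive set for $P^{ex}$ is correct (a measurable $A\subset B$ with $P^{ex}(A)<0$ would itself be a $C$ violating the disjointness clause), and passing to the total variation measure $\mu=|P^{ex}|$, normalized to a genuine probability measure via (iii*) applied to a partition refining the Hahn decomposition, is a legitimate construction. (Your side remark that (iii*) applied to the trivial partition forces $|P^{ex}(\Omega)|=1$ is in tension with the paper's own later usage, where $P^{ex}_t(\Omega)$ is explicitly allowed to be $0$ or negative; but you do not need that remark.)

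The gap is exactly where you located it, and the disjointification you propose cannot repair it. The identity $\mu(B)=P^{ex}(B)$ holds on generators but fails on $\mathscr{B}$ itself, because $\mathscr{B}$ is a sigma-algebra and therefore contains $\Omega$ and the complements of generators, and these meet the negative part of $\Omega$. Concretely, let $N$ be the negative set of the Hahn decomposition: whenever $P^{ex}$ takes any negative value one has $\mu(N)=P^{ex}_-(N)>0$ while $P^{ex}(N)=-P^{ex}_-(N)<0$, so $N$ is not a null piece for either measure, and $\mu(\Omega)=1$ while $P^{ex}(\Omega)=1-2P^{ex}_-(\Omega)<1$. Hence a general $B_j\in\mathscr{B}$ does not decompose into generator pieces plus jointly-null pieces, and $\int_\Omega\mathfrak{f}\,d\mu=\sum_{j}s_j\left[\mu(B_j)-P^{ex}(B_j)\right]$ need not vanish. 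The paper's own proof sidesteps this by integrating $\mathfrak{f}$ against the signed measure $P^{ex}$ itself, for which $\int_\Omega\mathbbm{1}_{B_j}\,dP^{ex}=P^{ex}(B_j)$ is a tautology, so the computation $\int_\Omega\mathfrak{f}\,dP^{ex}=0$ goes through for every $B_j\in\mathscr{B}$; the cost is that the concluding inference (pointwise $\mathfrak{f}<0$ forces a strictly negative integral) then requires the integrator to be a positive measure of positive mass, which is precisely the property you were trying to manufacture and which $P^{ex}$ does not have on all of $\mathscr{B}$. Your argument would close correctly if the Dutch book were required to use only sets from the generating collection $\{B:P^{ex}(B)\geq 0,\ B\cap C=\emptyset\ \forall C\ \text{with}\ P^{ex}(C)<0\}$ rather than from the sigma-algebra it generates; for the full $\mathscr{B}$ as defined, the key identification fails.
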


In our definition of Dutch book, we do not take into account the events $C \in \mathcal{F}$ for which $P^{ex}(C) < 0$ since those are events for which the punter cannot enter a bet. Hence, they cannot be used to build a Dutch book. 
\end{remark}

\subsection{Related literature}\label{rel_literature}
Let us now inspect how the definition and the interpretation of extended probabilities we have given so far relates to the existing literature.

Our framework is related to the one studied in \cite{burgin1}. Burgin studies a static environment (as opposed to the dynamic one we inspect in this work in the next section) where the state space $\Omega$ can be divided in two irreducible parts $\Omega^+$ and $\Omega^-$ such that $\#\Omega^+=\#\Omega^-$, where $\#$ denotes the cardinality operator. The elements of $\Omega^-$ are called anti-events, and they are usually connected to negative objects: encountering a negative object is a negative event. The author then states that an example of a negative object is given by antiparticles, the antimatter counterpart of quantum particles. Anti-events are given negative probabilities. If we require that the following conditions hold, we obtain a framework very similar to Burgin's one:
\begin{itemize}
\item[\textbullet] extended probabilities are {finitely} additive (instead of countably additive);
\item[\textbullet] our state space can be divided in two irreducible parts $\Omega^+$ and $\Omega^-$;
\item[\textbullet] $P^{ex}(A) \geq 0$ if and only if $A \subset \Omega^+$;
\item[\textbullet] there exists a function $\alpha:\Omega \rightarrow \Omega$ such that $\alpha(\omega)=-\omega$ and $\alpha^2(\omega)=\omega$;
\item[\textbullet] $P^{ex}(\Omega^+)=1$;
\item[\textbullet] $\{v_i,\omega,-\omega: v_i,\omega \in \Omega \text{, } i \in I\} = \{v_i: v_i \in \Omega \text{, } i \in I\}$, for all $\omega \in \Omega$ and all set of indices $I$.
\end{itemize}
The main difference is that in \cite{burgin1} $P^{ex}(\Omega^+)+|P^{ex}(\Omega^-)|=2$. Our interpretation of negative probabilities reconciles with Burgin's one if we consider anti-events as events for which we cannot enter a bet, which seems a reasonable assumption. He also gives a frequentist interpretation of negative probabilities, that complements our interpretation of negative extended probabilities.

In a subsequent paper \cite{burgin2}, Burgin generalizes his own setup, mainly by not requiring $\#\Omega^+=\#\Omega^-$ and by allowing any event to have either positive or negative probability, depending on external conditions. If we require that the following conditions hold, we obtain Burgin's generalized framework:
\begin{itemize}
\item[\textbullet] extended probabilities are finitely additive;
\item[\textbullet] $A \in \mathcal{F}$ implies $-A:=\{-\omega:\omega\in A\} \in \mathcal{F}$;
\item[\textbullet] for all $A \in \mathcal{F}$, $P^{ex}(-A)=-P^{ex}(A)$.
\end{itemize}
Notice that this last condition is very similar to Allen's principle of conservation of knowledge. The interpretation he gives for negative probabilities is similar to the one given in his previous work, with the peculiarity that now negative probabilities are not assigned exclusively to anti-events. Our interpretation can be seen as being a step behind Burgin's one: we give a specific reason for why an event $\tilde{A}$ is assigned a negative probability, namely that we cannot enter a bet on it. Then, Burgin states that the anti-event of $\tilde{A}$, $-\tilde{A}$, will have a positive (regular) probability. Of course, the vice versa holds: if an event is assigned a positive probability (because we can enter the bet), then according to Burgin its anti-event will have a negative probability.

Another interesting interpretation is given by Székely in \cite{szekeley}. The author proves that we can encounter a negative probability if we work with a random variable having a signed distribution. In addition, if $X$ has a signed distribution, then there exist two random variables $Y,Z$ having an ordinary (not signed) distribution such that $X+Y=Z$ in distribution. Therefore $X$ can be seen as a ``difference'' of two ordinary random variables $Z$ and $Y$. We reconcile our interpretation and Székely's one as follows. A signed distribution $P^s$ is simply an extended probability on the space of outcomes of a random variable. A pullback argument can then be used to define an extended probability $P^{ex}$ on $\Omega$: $P^{ex}(X^{-1}(I))=P^s(I)$, for all subsets $I$ of the outcome space of random variable $X$. So there are going to be events in $\Omega$ having negative probabilities: those are events we cannot enter a bet about.

The interpretation Kronz \cite{kronz} gives of negative probabilities is the following: he calls negative probabilities {inferred probabilities}, that can only be obtained indirectly by inference from operational (regular) probabilities. The associated events (that is, events having negative probabilities) are called {virtual events} in that they are non-operational, and so do not give rise to directly accessible relative frequencies. Actual events have non-negligible effect on them, and the reverse is also true. As it appears clear, virtual events are equivalent to latent events e.g. in the psychology \cite{bollen}, economics \cite{hu} and medicine \cite{rabe} literatures: Kronz assigns negative probabilities to latent events. These latter are events we do not observe, so it is fair to think we cannot enter a bet involving them. 

In \cite{wigner}, Wigner allows probabilities to go negative to sidestep the uncertainty principle of quantum mechanics. This latter states that given a particle that moves in one dimension, there is a limit to the precision with which position $x$ and momentum $p$ can be determined simultaneously in a given state. The observer can know the distribution of $x$ and that of $p$, but there is no joint probability distribution of $(x,p)$ with the correct marginals of $x$ and $p$. To overcome this shortcoming, Wigner came up with a quasiprobability distribution $W_{\psi}(x,p)$, that is, a countably additive function on the measurable subsets of state space $\Omega=\mathbb{R}^2$ such that $W_{\psi}(\Omega)=1$. Wigner's function $W_{\psi}(x,p)$ is a signed probability distribution; all values of  $W_{\psi}(x,p)$ are real, but some values may be negative. In \cite{blass}, the authors show that Wigner’s quasiprobability distribution is the unique signed probability distribution yielding the correct marginal distributions for position and momentum and all their linear combinations, and in \cite{vovk}, the authors show that Wigner's function can be tested using the canonical approach of  game-theoretic probability. We can give Wigner's function two interpretations. The first one is purely ``utilitarian'': because it is the unique quasiprobability distribution that gives the correct marginal distributions for $x$ and $p$, scientists should use it even if it allows for probabilities being negative-valued. The second one can be reconciled with the interpretation we give in the present work to negative extended probabilities. Because a scientist cannot observe $x$ and $p$ simultaneously, it is fair to think that they cannot enter a bet about events involving both position and momentum of a particle; hence, they resort to negative probabilities.

In \cite{benavoli2}, the authors study bounds on the algorithmic capabilities of a mathematical theory and analyze their implications. In particular, they require that the theory is logically consistent -- that is, it has to be based on a few axioms and rules from which mathematical truths can be unambiguously derived -- and that inferences in the theory should be computable in polynomial time -- that is, there should be an efficient way to execute the theory. The postulates of consistency and computation are apparently in conflict with each other: intuitively, if only polynomial time computations are allowed, the theory is consistent only up to what polynomial calculus allows, an instance that the authors call external-internal clash. They formalise such a clash via a ``weirdness theorem'', which shows that any theory obeying their two postulates necessarily departs in a very peculiar way from the probabilistic point of view. In particular, the theorem proves that all models compatible with the theory will present some negative probabilities (quasiprobabilities à la Wigner). They also show that quantum paradoxes are a consequence of the weirdness theorem. To develop their results, the authors rely on a characterization of probability in terms of lotteries (or gambles, a particular type of bets). They then provide
a subjective foundation, à la de Finetti, of so-called generalised probability theories. Negative probabilities arise when an agent would like to enter a given bet involving an event, but computational limitations related to the event prevent them from doing so. This interpretation coincides with the one we give in the present paper: the authors give a reason for which an agent is denied the possibility of entering a bet.

Finally, the interpretation of negative probabilities in \cite{gell-mann} and \cite{hartle1,hartle2} is very similar -- although not identical -- to  the one in the present work. We give the summary of their way of interpreting negative probabilities as reported in \cite{feintzeig}. First, they point out that the probabilities dictated by a physical theory instruct (rational) agents on how to bet on the outcomes of phenomena. Standard arguments from subjective Bayesian probability theory -- the Dutch book arguments -- demand that the Kolmogorovian axioms for classical probability theory must hold for the degrees of belief of any rational agent, which determine which bets the agent regard as fair. However, they point out that these arguments only apply to bets which are settleable, that is, bets about events the agent will certainly know at some point. They then argue that only bets on sufficiently coarse-grained alternative histories will be settleable, where this coarse-graining guarantees that the alternatives’ probabilities will lie in the unit interval. The main difference between their interpretation and the one presented in this paper is that for Gell-Mann and Hartle the bets on events that have negative probabilities cannot be settled, whereas we deem those bets to be settlable. The agent simply cannot enter them at the moment, but they may be given the opportunity in the future.

\section{Extended probabilities in statistical inference}\label{main}
In this section, we are going to consider an ex ante analysis in which we progressively learn the composition of the state space. It is ex ante in that it takes place before the actual statistical analysis. As time goes by, we collect new observations via a learning procedure specified in advance, e.g. an urn with or without replacement. 

Consider a measurable space $(\Omega,\mathcal{F})$ with $\Omega$ at most countable and $\mathcal{F}=2^\Omega$. At any time $t$, we have $\Omega=\Omega^-_t \sqcup \Omega^+_t$. For all $t \in \mathbb{N}_0:=\mathbb{N} \cup \{0\}$, $\Omega^-_t$ represents the ``latent'' part of $\Omega$ at time $t$, that is, the part that we have not yet observed. 
$\Omega^+_t$ represents the ``actual'' part of $\Omega$, that is, the portion of $\Omega$ that we have observed at time $t$ (at time $t=0$, $\Omega^+_0$ is the portion of $\Omega$ that we know ex ante, which is assumed nonempty). This approach is similar to the ``ex ante humility'' one introduced in \cite[section 1]{allen}, where latent and actual portions of the state space are first introduced, and a dynamic is described. A graphical representation of $\Omega=\Omega^-_t \sqcup \Omega^+_t$ is given in Figure \ref{fig3}. 
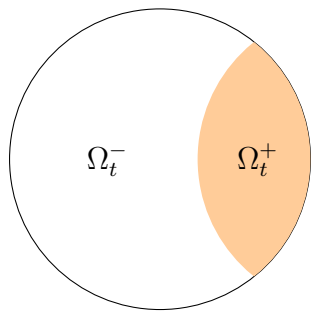
\begin{figure}[h!]
\centering
\begin{tikzpicture}
\draw[clip] (0,0) circle (2cm);
\fill[orange!40] (2.5,0) circle (2cm);
\node at (-.7,0) {$\Omega^-_t$};  
\node at (1.3,0) {$\Omega^+_t$};
\end{tikzpicture}
\caption{Graphical representation of $\Omega=\Omega^-_t \sqcup \Omega^+_t$.} 
\label{fig3}
\centering
\end{figure}
At time $t=0$, we specify the finest possible partition of $\Omega$, $\mathcal{E}=\{\{\omega\}\}_{\omega\in\Omega}$. It is such that $\mathcal{E}=\mathcal{E}_0^- \sqcup \mathcal{E}_0^+$; in this notation, $\mathcal{E}_0^-$ partitions $\Omega^-_0$, and $\mathcal{E}_0^+$ partitions $\Omega^+_0$. The most general way of proceeding is by specifying the number type (natural numbers $\mathbb{N}$, whole numbers $\mathbb{N}_0$, integers $\mathbb{Z}$, or rationals $\mathbb{Q}$) we subjectively believe we are working with, and setting it to be $\Omega$. To this extent, we give to $\Omega$ the {apparently possible} interpretation of  \cite[section 2.1.2]{walley}: $\Omega$ is the space of apparently possible states if it contains all the states $\omega$ that we believe are logically consistent with our available information. To give an example, a space of apparently possible states associated with a coin toss is 
\begin{align*}
\Omega=\{&\text{heads, tails, coin landing on its edge, coin breaking into pieces on landing,}\\ &\text{coin disappearing down a crack in the floor}\}.
\end{align*}
Cosidering the space of apparently possible states, then, amounts to considering the broadest state space associated with the statistical experiment of interest.

After that, we subjectively specify what we initially think the state space is, and we set it to be $\Omega_0^+$; for example, we may have previous information coming from similar (but not equal) experiments.         

Given the sequences $(\Omega_t^+)$ and $(\Omega_t^-)$, we require that, for all $t$,
$$\Omega^+_{t} \subset \Omega^+_{t+1} \quad \text{and} \quad \Omega^-_{t} \supset \Omega^-_{t+1}.$$
This means that $(\Omega_t^+)$ is monotone nondecreasing, and $(\Omega_t^-)$ is monotone nonincreasing, which implies that the limits of both exist and are well defined. In particular, we have that
$$ \lim_{t \rightarrow \infty} \Omega^+_t= \bigcup_{t \in \mathbb{N}_0} \Omega^+_t \quad \text{and} \quad \lim_{t \rightarrow \infty} \Omega^-_t= \bigcap_{t \in \mathbb{N}_0} \Omega^-_t.$$
We then have two possible scenarios. In the first one, $\bigcup_{t \in \mathbb{N}_0} \Omega^+_t=\Omega$, which implies that $\bigcap_{t \in \mathbb{N}_0} \Omega^-_t=\emptyset$. In the second one, $\bigcup_{t \in \mathbb{N}_0} \Omega^+_t \equiv \Omega^\prime \subsetneq \Omega$, which implies that $\bigcap_{t \in \mathbb{N}_0} \Omega^-_t \equiv \Omega^{\prime \prime}:=\Omega \setminus \Omega^\prime\neq\emptyset$.

As we collect more and more observations, we progressively discover the composition of our sample space. In the first scenario, in the limit we discover that the sample space associated with our experiment corresponds to the whole set $\Omega$ we initially specified. In this scenario, at time $t+1$ when an element $\tilde{\omega}$ is observed that belongs to $\Omega^-_t$, the element $E^-_{t,\tilde{\omega}}=\{\tilde{\omega}\}$ of the partition $\mathcal{E}^-_t$ consistent with $\tilde{\omega}$ becomes an element of $\mathcal{E}^+_{t+1}$. We use the following notation: $E^+_{t+1,\tilde{\omega}}=E^-_{t,\tilde{\omega}}$. So the partition of the latent space loses an element, while the partition of the actual space gains one. This means that, for all $t$, $\mathcal{E}^+_{t} \subset \mathcal{E}^+_{t+1}$ and $\mathcal{E}^-_{t} \supset \mathcal{E}^-_{t+1}$. We abuse notation: we write $\mathcal{E}^+_{t} \subset \mathcal{E}^+_{t+1}$ to indicate that $\#\mathcal{E}^+_{t} \leq \#\mathcal{E}^+_{t+1}$, where $\#\mathcal{E}^+_{t}$ denotes the number of elements of the partition $\mathcal{E}^+_{t}$, for all $t$, and we write $\mathcal{E}^-_{t} \supset \mathcal{E}^-_{t+1}$ to indicate that $\#\mathcal{E}^-_{t} \geq \#\mathcal{E}^-_{t+1}$. Hence, in the limit, the partition $\mathcal{E}_t^+$ of the ``actual'' space coincides with the partition $\mathcal{E}$ of the whole set $\Omega$. This mirrors the behavior of $\Omega_t^+$, that converges to $\Omega$.

In the second scenario $\Omega^\prime$ can be finite or countable. In the former case, there exists a $T \in \mathbb{N}$ after which the observations we collect at time $T+i$ already belong to $\Omega_T^+$, for all $i \in \mathbb{N}$. If that is the case, we write
\begin{align}\label{eq3}
\Omega^\prime \equiv \Omega^+_T.
\end{align}
This corresponds to discovering that the actual sample space is finite and smaller than the whole set $\Omega$ that we specified at the beginning of our analysis. If $\Omega^\prime$ is countable, we can only say that it is a proper subset of $\Omega$, and that we discover its composition in the limit. This may happen, for example, if we begin our analysis by setting $\Omega=\mathbb{N}_0$, but then realize that the state space associated with our experiment is actually $\Omega^\prime=\mathbb{N}$.

In general, when the second scenario takes place, we have $\Omega_t^+ \uparrow \Omega^\prime$, and $\Omega^-_t \downarrow \Omega^{\prime\prime}:=\Omega \setminus \Omega^\prime$. We also have that, in the limit, the partition $\mathcal{E}_t^+$ of the ``actual'' space coincides with the (finest possible) partition $\mathcal{E}^\prime$ of set $\Omega^\prime$. 
We call $\mathcal{E}^{\prime \prime}$ the partition whose elements belong to $\mathcal{E}$ but not to $\mathcal{E}^\prime$; again abusing notation, we write $\mathcal{E}^{\prime\prime}:=\mathcal{E} \setminus \mathcal{E}^\prime$. We call $\mathcal{F}^\prime=2^{\Omega^\prime}$ and $\mathcal{F}^{\prime\prime}=2^{\Omega^{\prime\prime}}$.

\begin{remark}\label{benevolent}
In this paper, our ex ante analysis is concluded by finding the true composition of the state space. We can interpret this using the concept of a benevolent bookmaker. While our doppelgänger is always able to enter a bet on all the events in the space $\Omega$ of apparently possible states, we end up only able to bet on the events of $\Omega$ that are crucial to the statistical analysis taking place after our ex ante analysis. This is akin to a benevolent bookmaker preventing us from entering bets on irrelevant events, that, if we were to bet on, would certainly make us lose money.\footnote{A more correct expression would be ``almost benevolent'' since we are considering only one-way bets.}

However, there may be cases in which the state space does not get fully discovered. For example, this may happen if we have an urn with a timer attached; once the time runs out,  the urn is sealed, so that we do not discover its entire composition. The following statistical analysis, which would require the use of extended probabilities, is explored in a future work. This case corresponds to the existence of a malevolent bookmaker that does not allow us to enter bets that are crucial to the statistical analysis.\footnote{Here ``malevolence'' has to be understood in terms of restricting the menu of available options to the agent.} This setting is especially important when studying events about which we will never be able to enter a bet, for example latent events.
\end{remark}

\begin{remark}\label{rem_on_omega}
If at any time $t$ we collect an observation $\breve{\omega}$ that does not belong to $\Omega$, this means that the space $\Omega$ we initially specified is not rich enough. We have then to specify a richer, larger set $\breve{\Omega} \supset \Omega$, and start our analysis over. We can either consider $\breve{\Omega}=\Omega \cup \{\breve{\omega}\}$, or define $\breve{\Omega}$ as a larger number type. Let us give an example. Suppose we begin our analysis by setting $\Omega=\mathbb{N}$, and after a while we observe $\breve{\omega}=1/2$. Then, we have to restart our analysis and we can either let $\breve{\Omega}$ be $\mathbb{N}\cup\{1/2\}$ or $\breve{\Omega}=\mathbb{Q}$.

It may also happen that our true sample space is $\breve{\Omega} \subsetneq \Omega_0^+$. In this case, equation (\ref{eq3}) holds with $T=0$, and our analysis would still be valid. The drawback is that, if that happens, we are not respecting one of the conditions listed in \cite{tsitsiklis} for a sample space to be valid. In particular, that the sample space $\Omega$ must have the right granularity depending on what we are interested in. This means that we must remove irrelevant information from the sample space. In other words, we must choose the right abstraction and forget irrelevant information.
This issue can be avoided by initially specifying $\Omega_0^+$ so that it has the fewest possible elements.
\end{remark}

On top of dealing with uncertainty on the composition of the sample space, we also address the problem of not being able to specify a unique (extended) probability measure on $\Omega$. That is why we are going to work with sets of extended probability measures. We call this approach \textit{extended sensitivity analysis}, since it corresponds to the extended probabilities counterpart of Bayesian sensitivity analysis \cite{berger}. We begin by considering a set $\mathcal{P}^{ex} \equiv \mathcal{P}^{ex}_0$ of extended probabilities that represent the agent's initial beliefs, and we update it as described in section \ref{update}. We denote the sequence of successive updates of $\mathcal{P}^{ex}_0$ as $(\mathcal{P}^{ex}_t)_{t \in \mathbb{N}}$. By working with sets of extended probability measures, we represent the condition of a researcher facing a decision under ambiguity \cite{ellsberg}. As we shall see in section \ref{update}, sequence $(\mathcal{P}^{ex}_t)_{t \in \mathbb{N}_0}$ converges in the Hausdorff metric.

Now, fix any $t \in \mathbb{N}_0$, and consider any $P^{ex}_t \in \mathcal{P}^{ex}_t$. We require the following
\begin{itemize}
\item[(i)] $P^{ex}_t(A) \in [0,1]$ if $A \subset \Omega_t^+$;
\item[(ii)] $P^{ex}_t(A) \in [-1,0]$ if $A \subset \Omega_t^-$;
\item[(iii)] $P^{ex}_t(A) \in [-1,1]$ if $A \cap \Omega_t^+ \neq \emptyset \neq A \cap \Omega_t^-$.
\end{itemize}
In particular, we compute the latter as follows.
\begin{proposition}\label{prop4}
The following is true.
\begin{align}\label{eq14}
\begin{split}
P^{ex}_t(A) &=\sum\limits_{E_{t,j}^+ \in \mathcal{E}_t^+ : P^{ex}_t (E_{t,j}^+)\neq 0} P^{ex}_t ( A \mid E_{t,j}^+ )P^{ex}_t(E_{t,j}^+)\\ &+ \sum\limits_{E_{t,j}^- \in \mathcal{E}_t^- : P^{ex}_t (E_{t,j}^-)\neq 0} P^{ex}_t ( A \mid E_{t,j}^- )P^{ex}_t(E_{t,j}^-).
\end{split}
\end{align}
\end{proposition}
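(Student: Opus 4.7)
The plan is to derive the claim as an extended-probability analogue of the law of total probability, using countable additivity (property (ii*)) together with the definition of extended conditional probability given in \eqref{eq13}. The key observation is that at time $t$ we have the finest possible partition $\mathcal{E}_t = \mathcal{E}_t^+ \sqcup \mathcal{E}_t^-$ of $\Omega$, whose elements are singletons by construction.

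First, I would fix an arbitrary $A \in \mathcal{F}$ and decompose it along this partition. Since $\mathcal{E}_t$ partitions $\Omega$, the sets $\{A \cap E_{t,j}\}$ with $E_{t,j}$ ranging over $\mathcal{E}_t^+ \cup \mathcal{E}_t^-$ form a countable collection of pairwise disjoint events whose union is $A$. Applying property (ii*) of an extended probability measure gives
\begin{equation*}
P^{ex}_t(A) = \sum_{E_{t,j}^+ \in \mathcal{E}_t^+} P^{ex}_t(A \cap E_{t,j}^+) + \sum_{E_{t,j}^- \in \mathcal{E}_t^-} P^{ex}_t(A \cap E_{t,j}^-).
\end{equation*}
Note that countable additivity holds here even though individual terms may be negative, because $P^{ex}_t$ is a finite signed measure.

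Next, I would rewrite each summand in terms of extended conditional probabilities. For every partition element $E_{t,j}$ with $P^{ex}_t(E_{t,j}) \neq 0$, definition \eqref{eq13} yields
\begin{equation*}
P^{ex}_t(A \cap E_{t,j}) = P^{ex}_t(A \mid E_{t,j})\, P^{ex}_t(E_{t,j}).
\end{equation*}
For the remaining partition elements, i.e.\ those $E_{t,j} = \{\omega\}$ with $P^{ex}_t(\{\omega\}) = 0$, the intersection $A \cap \{\omega\}$ is either $\emptyset$ or $\{\omega\}$ itself; in the former case $P^{ex}_t(A \cap E_{t,j}) = P^{ex}_t(\emptyset) = 0$, and in the latter $P^{ex}_t(A \cap E_{t,j}) = P^{ex}_t(\{\omega\}) = 0$. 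Hence the zero-probability partition elements contribute nothing to either sum, and one may restrict each sum to indices with $P^{ex}_t(E_{t,j}^\pm) \neq 0$, yielding exactly \eqref{eq14}.

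The only mildly delicate point is justifying that the zero-probability partition cells really can be dropped; this is clean here precisely because $\mathcal{E}$ is the finest partition $\{\{\omega\}\}_{\omega \in \Omega}$, so $A \cap E_{t,j}$ is either empty or coincides with $E_{t,j}$ itself. If a coarser partition were used, one would have to invoke Proposition \ref{prop2} or the inclusion-exclusion principle of Proposition \ref{prop3ext} to control the sign of $P^{ex}_t(A \cap E_{t,j})$ when $P^{ex}_t(E_{t,j}) = 0$, which could complicate matters. With the singleton partition this subtlety disappears, and the argument reduces to countable additivity plus the definition of extended conditional probability.
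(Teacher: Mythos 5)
Your proof is correct and follows essentially the same route as the paper's: decompose $A$ along the finest partition, apply countable additivity (ii*), and then rewrite each summand via the definition \eqref{eq13} of extended conditional probability. Your explicit justification for dropping the zero-probability cells (using that $A \cap E_{t,j}$ is either $\emptyset$ or $E_{t,j}$ for singleton cells) is a point the paper leaves implicit, and it is a welcome addition.
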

Notice that we need to consider the elements of $\mathcal{E}_t^+$ and $\mathcal{E}_t^-$ whose extended probabilities are not $0$ otherwise the conditional extended probabilities  $P^{ex}_t ( A \mid E_{t,j}^+ )$ and $P^{ex}_t ( A \mid E_{t,j}^-)$ may result in an indeterminate form of the $\frac{0}{0}$ kind. This requirement yields no loss of generality since if an element of a partition is assigned extended probability $0$, then it does not convey any information around event $A$. Notice also that, for the elements of $\mathcal{E}_t^+$ and $\mathcal{E}_t^-$ whose extended probabilities are not $0$, $P^{ex}_t ( A \mid E_{t,j}^+ )=\frac{P^{ex}_t ( A \cap E_{t,j}^+ )}{P^{ex}_t ( E_{t,j}^+ )} \geq 0$ by (i), and $P^{ex}_t ( A \mid E_{t,j}^- )=\frac{P^{ex}_t ( A \cap E_{t,j}^- )}{P^{ex}_t ( E_{t,j}^- )} \geq 0$ because it is the ratio of two negative quantities; once multiplied by $P^{ex}_t(E_{t,j}^-)$, which is negative by (ii), it gives us a negative value. So the sign of $P^{ex}_t(A)$ is not predetermined when $A \cap \Omega_t^+ \neq \emptyset \neq A \cap \Omega_t^-$. The interpretation of these conditions is straightforward: we assign negative extended probabilities to events that belong to the latent space at time $t$ (meaning that at time $t$ we cannot enter a bet about them), while we assign positive extended probabilities to events that are in the actual, observed space (meaning that at time $t$ we can enter a bet about them). If a given event is only partially known, its extended probability has not a predetermined sign: it will depend on whether we know enough about it (then the probability will be positive), or not (vice versa). This means that we can enter a bet about ``sub-event'' $A \cap \Omega_t^+$, but not about $A \cap \Omega_t^-$. For example, let $A=\{\text{tomorrow there will be a thunderstorm}\}$. Then, for some $t$, suppose that
\begin{align*}
A \cap \Omega_t^+&=\{\text{tomorrow will rain}\},\\
A \cap \Omega_t^-&=\{\text{tomorrow there will be a dry thunderstorm}\}.
\end{align*}
Then we can place a bet on $A \cap \Omega_t^+$ but not on $A \cap \Omega_t^-$, so the extended probability we assign to $A$ does not have a predetermined sign.

Notice also that we can define a set of events $\mathscr{C}_{P^{ex}_t}:=\{ A \in \mathcal{F} : P^{ex}_t(A \cap \Omega^-_t)=-P^{ex}_t(A \cap \Omega^+_t) \}$, which we call {critical events according to $P^{ex}_t$}, with the property that $P^{ex}_t(A)=0$, for all $A \in \mathscr{C}_{P^{ex}_t}$ (immediate from the definition). We assign the sub-event we can enter a bet about the same probability that our doppelgänger assigns to the sub-event that we are not allowed to bet on. This means that we deem their ``actual'' portion (the one we know/we have observed so far) to be just as likely than their ``latent'' portion (the one we do not know/we have not yet observed). The set of critical events according to the whole set of extended probability measures $\mathcal{P}^{ex}_t$ is given by
$$\mathscr{C}_{t}:= \bigcap\limits_{P^{ex}_t \in \mathcal{P}^{ex}_t}\mathscr{C}_{P^{ex}_t}.$$
\subsection{Properties of this environment}
We now give some results concerning the environment we depicted so far. 

\begin{proposition}\label{prop7}
Let $A^+_t:=A \cap \Omega_t^+$, $A^-_t:=A \cap \Omega_t^-$, $B^+_t:=B \cap \Omega_t^+$, and $B^-_t:=B \cap \Omega_t^-$. Then, $A \cup B=(A^+_t \cup B^+_t)\cup (A^-_t \cup B^-_t)$ and $A \cap B=(A^+_t \cap B^+_t)\cup (A^-_t \cap B^-_t)$, for all $t$. Also, $A \setminus B=(A^+_t \setminus B^+_t) \cup (A^-_t \setminus B^-_t)$, for all $t$.
\end{proposition}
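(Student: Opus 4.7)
The plan is to exploit the single structural fact driving all three identities: since $\Omega = \Omega_t^+ \sqcup \Omega_t^-$ is a disjoint decomposition at every time $t$, intersecting any set with $\Omega$ yields a canonical disjoint decomposition $S = S_t^+ \sqcup S_t^-$, where $S_t^{\pm} = S \cap \Omega_t^{\pm}$. The three claims are then routine instances of the distributive laws and the fact that $\Omega_t^+ \cap \Omega_t^- = \emptyset$.

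For the union identity, I would just write $A = A_t^+ \cup A_t^-$ and $B = B_t^+ \cup B_t^-$ (both trivially, since $\Omega_t^+ \cup \Omega_t^- = \Omega$ covers $A$ and $B$), and rearrange using commutativity and associativity of $\cup$. For the intersection, I would apply the distributive law: $A \cap B = (A \cap B) \cap (\Omega_t^+ \cup \Omega_t^-) = \bigl((A \cap B) \cap \Omega_t^+\bigr) \cup \bigl((A \cap B) \cap \Omega_t^-\bigr)$, and then observe that the ``cross terms'' $A_t^+ \cap B_t^-$ and $A_t^- \cap B_t^+$ vanish because $\Omega_t^+ \cap \Omega_t^- = \emptyset$, leaving exactly $(A_t^+ \cap B_t^+) \cup (A_t^- \cap B_t^-)$.

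For the set difference, I would rewrite $A \setminus B = A \cap B^c$ and argue by element-chasing: an element $x \in A_t^+ \setminus B_t^+$ must satisfy $x \in A \cap \Omega_t^+$ and $x \notin B \cap \Omega_t^+$; since $x \in \Omega_t^+$, this forces $x \notin B$, so $A_t^+ \setminus B_t^+ = (A \setminus B) \cap \Omega_t^+$. The analogous statement holds with $-$ in place of $+$, and the union of the two recovers $(A \setminus B) \cap \Omega = A \setminus B$.

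There is no real obstacle here; the proposition is a bookkeeping statement that sets up the notation used later (for instance in the ``weather'' example and in the analysis of $\mathscr{C}_{P_t^{ex}}$). The only thing to be careful about is to explicitly invoke disjointness of $\Omega_t^+$ and $\Omega_t^-$ when killing the cross terms in the intersection identity, and to distinguish $B_t^+ = B \cap \Omega_t^+$ from $(B \cap \Omega_t^+)^c$ when handling the difference, so that the complementation is taken inside $\Omega_t^+$ rather than inside $\Omega$.
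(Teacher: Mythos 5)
Your proposal is correct and follows essentially the same route as the paper: the paper also decomposes via $\Omega=\Omega_t^+\sqcup\Omega_t^-$ and the distributive laws for the union and intersection identities, and handles the set difference by the same element-chasing argument showing $A_t^+\backslash B_t^+=(A\backslash B)\cap\Omega_t^+$ (and likewise for the latent part). No gaps.
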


Recall now that a set ring is a system of sets $\mathbb{B}$ such that $A,B \in \mathbb{B}$ implies $A\cap B \in \mathbb{B}$ and $(A\setminus B) \cup (B\setminus A) =: A \triangle B\in \mathbb{B}$. A set ring $\mathbb{B}$ with a unit element, i.e. $E \in \mathbb{B}$ such that for all $A \in \mathbb{B}$, $A \cap E=A$, is called a set algebra. Let $\mathcal{F}^+_t=2^{\Omega^+_t}$ and $\mathcal{F}^-_t=2^{\Omega^-_t}$.
\begin{proposition}\label{prop8}
$\mathcal{F}_t^+$ and $\mathcal{F}_t^-$ are set algebras, for all $t$.
\end{proposition}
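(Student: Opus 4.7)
The plan is to verify the three defining properties of a set algebra directly from the fact that $\mathcal{F}_t^+ = 2^{\Omega_t^+}$ and $\mathcal{F}_t^- = 2^{\Omega_t^-}$ are, by definition, the full power sets of $\Omega_t^+$ and $\Omega_t^-$ respectively. Since power sets are closed under every Boolean operation, the argument is essentially an immediate verification.

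First, for closure under intersection, I would take $A, B \in \mathcal{F}_t^+$, note that $A, B \subset \Omega_t^+$, and conclude that $A \cap B \subset \Omega_t^+$, hence $A \cap B \in 2^{\Omega_t^+} = \mathcal{F}_t^+$. Next, for closure under symmetric difference, I would observe that $A \triangle B = (A \setminus B) \cup (B \setminus A) \subset A \cup B \subset \Omega_t^+$, so again $A \triangle B \in \mathcal{F}_t^+$. This establishes that $\mathcal{F}_t^+$ is a set ring.

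To promote the ring to an algebra, I would exhibit a unit element: since $\Omega_t^+ \subset \Omega_t^+$, we have $\Omega_t^+ \in \mathcal{F}_t^+$, and for every $A \in \mathcal{F}_t^+$ we have $A \subset \Omega_t^+$ so that $A \cap \Omega_t^+ = A$. Thus $\Omega_t^+$ serves as the unit. The identical argument, with $\Omega_t^+$ replaced by $\Omega_t^-$, shows that $\mathcal{F}_t^-$ is a set algebra with unit $\Omega_t^-$.

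There is no real obstacle here: the statement is a direct unpacking of the definitions, and the only thing one needs to invoke is that the power set of any set is closed under arbitrary Boolean operations. The proposition is probably stated because set-algebra status of $\mathcal{F}_t^\pm$ will be used later (e.g. for defining the extended measures piecewise on latent and actual components); the proof itself requires nothing beyond the above checks.
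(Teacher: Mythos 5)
Your proof is correct and proceeds, like the paper's, by directly verifying closure under intersection and symmetric difference and exhibiting $\Omega_t^{\pm}$ as the unit; the only cosmetic difference is that you justify closure from the power-set structure of $\mathcal{F}_t^{\pm}=2^{\Omega_t^{\pm}}$ (any subset of $\Omega_t^{\pm}$ automatically belongs), whereas the paper invokes the sigma-algebra closure properties (complements and countable intersections) and handles $A\setminus B$ by cases. Your route is, if anything, slightly more streamlined, and there is no gap.
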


We also point out that if $A=\{\omega_1,\ldots,\omega_k\}$ and $\omega_1,\ldots,\omega_k \in  \Omega$, then 
$$P^{ex}(A)=\sum_{j=1}^k P^{ex}(\{\omega_j\}).$$
This is immediate from the countable additivity of extended probabilities.

Another property is the following. Fix any $t$ and let $A,B \in\mathcal{F}_t^+$ such that $A \subset B$. Then, $P^{ex}_t(A) \leq P^{ex}_t(B)$. Let then $C,D \in\mathcal{F}_t^-$ such that $C \subset D$. Then, $P^{ex}_t(C) \geq P^{ex}_t(D)$. Both these results come from Proposition \ref{prop2}.

The following is also interesting.
\begin{proposition}\label{prop9}
Consider  $A \subset \cup_{j \in \mathbb{N}_0} A_j$, where $A_j \in \mathcal{F}$ for all $j$, and also $A \in \mathcal{F}$. Then $P^{ex}_t(A) \leq \sum_{j \in \mathbb{N}_0} P^{ex}_t(A_j)$ if $\cup_{j \in \mathbb{N}_0} A_j \in\mathcal{F}_t^+$, and $P^{ex}_t(A) \geq \sum_{j \in \mathbb{N}_0} P^{ex}_t(A_j)$ if $\cup_{j \in \mathbb{N}_0} A_j \in\mathcal{F}_t^-$. If instead some of the $A_j$'s are in $\mathcal{F}_t^+$ and some are in $\mathcal{F}_t^-$, then $P^{ex}_t(A) \gtreqless \sum_{j \in \mathbb{N}_0} P^{ex}(A_j)$, that is, $P^{ex}_t(A)$ can be larger, smaller, or equal to $\sum_{j \in \mathbb{N}_0} P^{ex}(A_j)$.
\end{proposition}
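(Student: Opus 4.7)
The plan is to reduce to the classical proof of countable subadditivity for finite measures and then verify that the sign hypotheses needed for the monotonicity result of Proposition \ref{prop2} are automatically satisfied in cases one and two. First I would disjointify by setting $B_0 := A_0$ and $B_j := A_j \setminus \bigcup_{i<j} A_i$ for $j \geq 1$. The $B_j$'s are pairwise disjoint, $B_j \subset A_j$, and $\bigcup_j B_j = \bigcup_j A_j$. By countable additivity (ii*) one then has
\[
P^{ex}_t\Bigl(\bigcup_{j \in \mathbb{N}_0} A_j\Bigr) = \sum_{j \in \mathbb{N}_0} P^{ex}_t(B_j).
\]

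Next I would handle the first case, where $\bigcup_j A_j \in \mathcal{F}_t^+$. Because $\mathcal{F}_t^+ = 2^{\Omega_t^+}$, every $A_j$, every $B_j$, every $A_j \setminus B_j$, and the sets $A$ and $(\bigcup_j A_j) \setminus A$ all sit inside $\Omega_t^+$, so by condition (i) their $P^{ex}_t$ values are non-negative. Proposition \ref{prop2} applied to $B_j \subset A_j$ then yields $P^{ex}_t(B_j) \leq P^{ex}_t(A_j)$, and applied to $A \subset \bigcup_j A_j$ yields $P^{ex}_t(A) \leq P^{ex}_t(\bigcup_j A_j)$. Chaining these with the equality above gives $P^{ex}_t(A) \leq \sum_j P^{ex}_t(A_j)$. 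The second case, $\bigcup_j A_j \in \mathcal{F}_t^-$, is strictly symmetric: every relevant set lies in $\Omega_t^-$, every relevant extended probability is non-positive by (ii), and the second half of Proposition \ref{prop2} reverses both inequalities to produce $P^{ex}_t(A) \geq \sum_j P^{ex}_t(A_j)$.

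The mixed case is where no determinate inequality can hold, and I would establish it by exhibiting two toy examples inside a common $(\Omega,\mathcal{F})$ with partitioned state space. In one example I would arrange the signs of the $P^{ex}_t(A_j)$ so that the actual-portion contribution dominates, yielding $P^{ex}_t(A) < \sum_j P^{ex}_t(A_j)$; in the other I would arrange the latent-portion contribution to dominate, yielding the reverse strict inequality. Together these show that the relation can go either way, which is exactly what the symbol $\gtreqless$ records.

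The main obstacle I expect is purely bookkeeping rather than conceptual: one must verify the three sign conditions in Proposition \ref{prop2} at each invocation (for both $A_j$ and $B_j$, and for $A$ and $\bigcup_j A_j$). This is automatic in cases one and two because the ambient set inclusion into $\Omega_t^+$ or $\Omega_t^-$ forces all involved sets to belong to a single signed ``zone'' via condition (i) or (ii). In the mixed case the signs genuinely fail to be controlled, which is precisely why no definite inequality survives, and constructing the two examples is the cleanest way to make that failure explicit.
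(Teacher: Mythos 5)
Your argument is correct, and it diverges from the paper's proof in one substantive place. Both proofs obtain the first inequality $P^{ex}_t(A) \leq P^{ex}_t(\bigcup_j A_j)$ (resp.\ $\geq$) identically, by writing $\bigcup_j A_j = A \sqcup ((\bigcup_j A_j) \cap A^c)$ and invoking the signed monotonicity of Proposition \ref{prop2}, with the sign hypotheses guaranteed exactly as you say by $\bigcup_j A_j \subset \Omega_t^+$ (resp.\ $\Omega_t^-$) together with condition (i) (resp.\ (ii)). The difference is in the second inequality $P^{ex}_t(\bigcup_j A_j) \leq \sum_j P^{ex}_t(A_j)$: the paper cites the two-set inclusion--exclusion identity of Proposition \ref{prop3ext}, which strictly speaking only yields finite subadditivity after iteration and leaves the passage to a countable union implicit, whereas your disjointification $B_j := A_j \setminus \bigcup_{i<j} A_i$ handles the countable case in one step via countable additivity (ii*) plus a term-by-term application of Proposition \ref{prop2} to $B_j \subset A_j$. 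Your route is the more self-contained of the two for $I$ infinite; the paper's is shorter but relies on an unstated limiting argument. For the mixed case, the paper simply asserts that nothing general can be said, so your plan of exhibiting two explicit examples with opposite strict inequalities would actually be more informative than what the paper provides --- though note that you have only sketched, not constructed, those examples, and you should write them down (a three-point $\Omega$ with one latent point suffices) if you want that part to count as a proof rather than a claim.
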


In the setting we have outlined so far, there is a way of operationalizing equation (\ref{eq1}). Pick any $A \in \mathcal{F}$; we have
\begin{align}\label{eq2}
P^{ex}_t(A^c)=P^{ex}_t \left( \Omega_t^+ \setminus [\Omega^+_t \cap A] \right) + P^{ex}_t \left( \Omega_t^- \setminus [\Omega^-_t \cap A] \right)
\end{align}
We retain the fact that $P^{ex}_t(\emptyset)=0$; indeed
\begin{align*}
P^{ex}_t(\Omega^c)&=P^{ex}_t(\emptyset)=P^{ex}_t \left( \Omega_t^+ \setminus [\Omega^+_t \cap \Omega] \right) + P^{ex}_t \left( \Omega_t^- \setminus [\Omega^-_t \cap \Omega] \right)\\
&=P^{ex}_t(\emptyset)+P^{ex}_t(\emptyset)\\
&\iff P^{ex}_t(\emptyset)=0.
\end{align*}
We can also write $P^{ex}_t(A^c)=P^{ex}_t(A^c \cap \Omega^+_t)+P^{ex}_t(A^c \cap \Omega^-_t)$, because, as we know, $A^c=(A^c \cap \Omega^+_t) \sqcup (A^c \cap \Omega^-_t)$.
\subsection{Interpretation and updating procedure}\label{update}
Let us now discuss the probability assigned to the whole sample space $\Omega$. From (ii*), we know that since, for all $t$, $\Omega_t^+ \sqcup \Omega_t^-=\Omega$, then $P^{ex}_t(\Omega)=P^{ex}_t(\Omega_t^+)+P^{ex}_t(\Omega_t^-)$. Also, from (i) and (ii) we know that $P^{ex}_t(\Omega_t^+) \geq 0$ and $P^{ex}_t(\Omega_t^-) \leq 0$. So
\begin{align*}
P^{ex}_t(\Omega)=
\begin{cases}
      p>0 & \text{ if } P^{ex}_t(\Omega_t^+)>|P^{ex}_t(\Omega_t^-)|\\
      p=0 & \text{ if } P^{ex}_t(\Omega_t^+)=|P^{ex}_t(\Omega_t^-)|\\
      p<0 & \text{ if } P^{ex}_t(\Omega_t^+)<|P^{ex}_t(\Omega_t^-)|
    \end{cases}.
\end{align*}
What does it mean, then, for $P^{ex}_t(\Omega)$ to be equal to $0$? And to be negative? And to be positive, but not $1$? Given the benevolent bookmaker interpretation, we have the following. If $P^{ex}_t(\Omega)=0$, it means that we have no sufficient information to say whether the sample space associated with our experiment is in fact $\Omega$. If $P^{ex}_t(\Omega)<0$, it means that, for the time being, the sample space associated with our experiment appears to be some $\breve{\Omega} \subsetneq \Omega$. If $P^{ex}_t(\Omega) \in (0,1)$, it means that there is evidence that the sample space associated with our experiment could be in fact $\Omega$, but we cannot state it with certainty. 

The natural question that one might ask now is, for some $t \in \mathbb{N}_0$, how do we come up with negative numbers to assign to events that belong to the latent space $\Omega^-_t$. Or, for that matter, how do we come up with positive numbers to assign to events that belong to the actual space $\Omega^+_t$. Call $\Delta(\Omega,\mathcal{F})$ the set of probability measures on $(\Omega,\mathcal{F})$, and $\Delta^{ex}(\Omega,\mathcal{F})$ the set of extended probability measures on $(\Omega,\mathcal{F})$. The latter is a linear space, as shown in \cite{rao}. In a future work, we will argue that it is a Dedekind complete Banach lattice with respect to the norm induced by the total variation of an element $P^{ex}$ of $\Delta^{ex}(\Omega,\mathcal{F})$. We will also show that if $\Omega$ is a compact separable space, then the subset $\Delta^{ex}_{Baire}(\Omega,\mathcal{F}) \subset \Delta^{ex}(\Omega,\mathcal{F})$ of Baire extended probability measures is the dual of the real Banach space of all continuous real-valued functions on $\Omega$.

Consider any $P \in \Delta(\Omega,\mathcal{F})$ such that, for all $\tilde{A} \in \mathcal{F}^+_0$, $P(\tilde{A})=p\in [0,1]$ is the amount we deem fair to pay to enter a bet about $\tilde{A}$; we call it the \textit{oracle probability measure}.
Then, for a generic $A \in \mathcal{F}$ we have that
\begin{equation}\label{guidance_p}
P^{ex}_0(A\cap\Omega^+_0)=P(A\cap\Omega^+_0)
\end{equation}
and
\begin{equation}\label{guidance_p2}
P^{ex}_0(A\cap\Omega^-_0)=-P(A\cap\Omega^-_0).
\end{equation}

It is easy to see that $P^{ex}_0$ satisfies (i*) and (ii*). Notice also that
\begin{align}\label{eq35}
\sum_{E \in \mathcal{E}} \left| P^{ex}_0(E) \right|=\sum_{E_0^+ \in \mathcal{E}_0^+} P^{ex}_0(E_0^+) + \sum_{E_0^- \in \mathcal{E}_0^-} \left| P^{ex}_0(E_0^-) \right|=1,
\end{align}
so $P^{ex}_0$ satisfies (iii*) as well. It is hence a properly defined extended probability measure. Clearly, \eqref{eq35} holds for all $t \in \mathbb{N}_0$, not just for $t=0$.

This way of assessing initial extended probabilites well reconciles with the interpretation we gave in general for extended probabilities: we cannot enter bets about events $A \not\in \mathcal{F}^+_0$ (in this case, because we cannot observe them for the time being). So we assess the probabilites of the events $A \in \mathcal{F}^+_0$ as specified in section \ref{extended}, and then we ask our doppelgänger the probabilities $P_{0,{D}}(B) \geq 0$ they assign to the elements $B \in \mathcal{F}^-_0$.\footnote{Subscript $D$ stands for ``doppelgänger''.} Then, we flip the sign to those, that is, $P^{ex}_0(B)=-P_{0,D}(B)$, for all $B \in \mathcal{F}^-_0$. In this notation, $P_{0,D}(B)$ is the probability the doppelgänger assigns to event $B$ at time $t=0$.

Because the agent faces ambiguity, they need to specify a set $\mathcal{P} \subset \Delta(\Omega,\mathcal{F})$ of probability measures. Every element $P \in \mathcal{P}$ induces an extended probability $P_0^{ex}$ as we just described. In this way, we build the set $\mathcal{P}_0^{ex}$.

Let us now discuss  how to update extended probabilities, that is, how to update $P^{ex}_t(A)$ to $P^{ex}_{t+1}(A)$, for all $A \in \mathcal{F}$, for all $P^{ex}_t \in \mathcal{P}^{ex}_t$, for all $t \in \mathbb{N}_0$.\footnote{It is worth mentioning that in \cite{walley_bom} too the author discusses a process of learning the state space combined with learning about the probability of the elements of $\mathcal{F}$.} We first consider a procedure to discover the components of the sample space that is equivalent to an urn without replacement. That is, after specifying $\Omega$ and $\Omega_0^+$, we start our analysis with an urn whose content is unknown and possibly countable; it represents the true sample space associated with our experiment. At any time point $t$, we extract a ball (an element $\omega$ of the sample space). Once we learn about that element, our knowledge about the composition of the urn increases. We do not put the ball back into the urn; we discuss the case in which the discover procedure is equivalent to an urn with replacement later in the paper.

Let us begin with the updating procedure from $P_0^{ex} \in \mathcal{P}_0^{ex}$ to $P_1^{ex}\in \mathcal{P}_1^{ex}$. 

We collect a new observation $\omega \in \Omega$. If $\omega \in\Omega_0^-$, this means that, at time $t=1$, we learn a new element of the true sample space. Notice that $\omega$ is consistent with an element $E^-_{0,\omega}$ of $\mathcal{E}_0^-$ (that is, $E^-_{0,\omega}=\{\omega\}$), which -- given that we observe such $\omega$ -- at time $t=1$ becomes an element of $\mathcal{E}_1^+$; in formulas, $E^-_{0,\omega}=E^+_{1,\omega}$. Then, $\mathcal{E}_1^+ \supset \mathcal{E}_0^+$, $\mathcal{E}_1^- \subset \mathcal{E}_0^-$, and we update the extended probability assigned to $E^-_{0,\omega}=E^+_{1,\omega}$ as follows
\begin{align}\label{eq4_1}
P^{ex}_1(E^+_{1,\omega})=\left| P^{ex}_0(E^-_{0,\omega}) \right|.
\end{align}
The extended probabilities assigned to the other elements of $\mathcal{E}$ (the finest possible partition of the whole $\Omega$) are held constant. From \eqref{eq14}, the updated extended probability associated with event $A$ is given by
\begin{align}\label{eq5_1}
\begin{split}
P^{ex}_1(A)=&\sum\limits_{E^+_{1,j} \in \mathcal{E}^+_1 : P^{ex}_1(E^+_{1,j}) \neq 0} P^{ex}_1(A \mid E^+_{1,j}) P^{ex}_1(E^+_{1,j})\\ &+ \sum\limits_{E^-_{1,j} \in \mathcal{E}^-_1 : P^{ex}_1(E^-_{1,j}) \neq 0} P^{ex}_1(A \mid E^-_{1,j}) P^{ex}_1(E^-_{1,j}),
\end{split}
\end{align}
where
$$P^{ex}_1(A \mid E^+_{1,j})=\frac{P^{ex}_1(A \cap E^+_{1,j})}{P^{ex}_1(E^+_{1,j})}$$and$$P^{ex}_1(A \mid E^-_{1,j})=\frac{P^{ex}_1(A \cap E^-_{1,j})}{P^{ex}_1(E^-_{1,j})}.$$
Notice that we are working with the finest possible partition of $\Omega$, that is, $\mathcal{E}=\{\{\omega\}\}_{\omega\in \Omega}$. Then, for any $A \in \mathcal{F}$ and any $E \in\mathcal{E}$, $A \cap E=A \cap \{\omega\}$, which is equal to the empty set if $\omega \not\in A$, and it is equal to $\{\omega\}=E$ if $\omega \in A$. Hence, for all $E \in \mathcal{E}$ such that $P_1^{ex}(E)\neq 0$,
$$P^{ex}_1(A \mid E)=\frac{P^{ex}_1(A \cap E)}{P^{ex}_1(E)}=\begin{cases}
\frac{P^{ex}_1(\emptyset)}{P^{ex}_1(E)}=0 & \text{if } A \cap E= \emptyset\\
\frac{P^{ex}_1(E)}{P^{ex}_1(E)}=1 & \text{if } A \cap E= \{\omega\}
\end{cases},$$
for all $E \in \mathcal{E}$.\\
So, equation \eqref{eq5_1} can be rewritten as
\begin{align}\label{eq40}
\begin{split}
P^{ex}_1(A)=&\sum\limits_{E^+_{1,j} \in \mathcal{E}^+_1: A \cap E^+_{1,j} \neq \emptyset \text{, } P^{ex}_1(E^+_{1,j}) \neq 0} P^{ex}_1(E^+_{1,j})\\ 
&+ \sum\limits_{E^-_{1,j} \in \mathcal{E}^-_1: A \cap E^-_{1,j} \neq \emptyset \text{,  }P^{ex}_1(E^-_{1,j}) \neq 0} P^{ex}_1(E^-_{1,j}).
\end{split}
\end{align}

Of course this holds for all $P^{ex}_1 \in \mathcal{P}^{ex}_1$. Notice that we are implicitly assuming Allen's principle of conservation of knowledge. Indeed, suppose at time $\mathbf{t}$ the event $A$ is entirely in the latent portion of $\Omega$, that is, $A \cap \Omega_\mathbf{t}^-=A$, and at time $\mathbf{t}+k$ it is entirely in the actual portion of $\Omega$, that is, $A \cap \Omega_{\mathbf{t}+k}^+=A$. Then, by \eqref{eq4_1} and \eqref{eq40}, we have that $P^{ex}_{\mathbf{t}+k}(A)=|P^{ex}_{\mathbf{t}}(A)|$. This has the practical advantage of making the updating procedure mechanical (we do not need to reassess any subjective extended probability when new observations become available), and of preserving the initial opinion of the researcher. The importance of this last point is discussed in Remark \ref{rem21}.

If instead we observe $\omega \in\Omega_0^+$, this means that we made a good job in specifying $\Omega_0^+$, and so we keep the extended probability constant
\begin{align}\label{eq41}
P^{ex}_1(E^+_{1,\omega})=P^{ex}_0(E^+_{0,\omega}) \geq 0.
\end{align}
Of course, the extended probabilities assigned to the other elements of $\mathcal{E}$ remain constant as well. 

We follow the procedures in \eqref{eq4_1} and \eqref{eq41} to update $P^{ex}_t$ to $P^{ex}_{t+1}$ for all $t \in \mathbb{N}_0$, not just from $t=0$ to $t=1$. 
We call $(P^{ex}_t)$ the sequence of updates of initial extended probability $P^{ex}_0$. Now consider the extended probability $P^{ex}_\infty \in \Delta^{ex}(\Omega,\mathcal{F})$ such that
\begin{equation*}
P^{ex}_\infty \left( A\cap \bigcup_{t \in \mathbb{N}_0}\Omega^+_t \right)=P \left( A\cap \bigcup_{t \in \mathbb{N}_0}\Omega^+_t \right)
\end{equation*}
and
\begin{equation*}
P^{ex}_\infty \left( A\cap \bigcap_{t \in \mathbb{N}_0}\Omega^-_t \right)=-P \left( A\cap \bigcap_{t \in \mathbb{N}_0}\Omega^-_t \right),
\end{equation*}
for all $A \in \mathcal{F}$, where $P$ is the oracle probability measure we used in \eqref{guidance_p} and \eqref{guidance_p2} to specify $P^{ex}_0$. Let us denote by $d_{ETV}$ the extended total variation distance, 
$$d_{ETV}(P^{ex},Q^{ex}):=\sup_{A \in \mathcal{F}} \left| P^{ex}(A)-Q^{ex}(A) \right|.$$
It is routine to check that $d_{ETV}$ is a metric: the proof goes along the lines of showing that the total variation distance is a metric. Then, the following holds.
\begin{proposition}\label{prop11}
${P}^{ex}_t \rightarrow {P}^{ex}_\infty$ as $t \rightarrow \infty$ in the extended total variation metric.
\end{proposition}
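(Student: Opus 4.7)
The plan is to express $P^{ex}_t(A)$ explicitly in terms of the oracle measure $P$ and the decomposition $\Omega = \Omega_t^+ \sqcup \Omega_t^-$, derive an analogous closed form for $P^{ex}_\infty(A)$, and then obtain a bound on $|P^{ex}_t(A) - P^{ex}_\infty(A)|$ that is uniform in $A \in \mathcal{F}$. Taking the supremum and applying continuity from above of $P$ will then deliver convergence in $d_{ETV}$.

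First, I would track how the updating rule \eqref{eq4_1}--\eqref{eq41} acts on singletons. At $t=0$, the construction in \eqref{guidance_p}--\eqref{guidance_p2} gives $P^{ex}_0(\{\omega\}) = P(\{\omega\})$ if $\omega \in \Omega_0^+$ and $P^{ex}_0(\{\omega\}) = -P(\{\omega\})$ if $\omega \in \Omega_0^-$. Each update either leaves $P^{ex}_t(\{\omega\})$ unchanged, or (when the observed element is $\omega$) flips its sign by \eqref{eq4_1}. Thus for every $t$,
\begin{equation*}
P^{ex}_t(\{\omega\}) = \begin{cases} P(\{\omega\}) & \text{if } \omega \in \Omega_t^+,\\ -P(\{\omega\}) & \text{if } \omega \in \Omega_t^-. \end{cases}
\end{equation*}
Countable additivity (axiom (ii*)) applied to $A = \bigsqcup_{\omega \in A}\{\omega\}$ then yields
\begin{equation*}
P^{ex}_t(A) = P(A \cap \Omega_t^+) - P(A \cap \Omega_t^-),
\end{equation*}
and the same argument, applied to the limiting partition $\Omega = \Omega^\prime \sqcup \Omega^{\prime\prime}$ with $\Omega^\prime = \bigcup_t \Omega_t^+$ and $\Omega^{\prime\prime} = \bigcap_t \Omega_t^-$, gives $P^{ex}_\infty(A) = P(A \cap \Omega^\prime) - P(A \cap \Omega^{\prime\prime})$.

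Subtracting and using the triangle inequality,
\begin{equation*}
|P^{ex}_t(A) - P^{ex}_\infty(A)| \leq P\bigl(A \cap (\Omega^\prime \setminus \Omega_t^+)\bigr) + P\bigl(A \cap (\Omega_t^- \setminus \Omega^{\prime\prime})\bigr) \leq 2 P(\Omega^\prime \setminus \Omega_t^+),
\end{equation*}
where the last step uses the set-theoretic identity $\Omega^\prime \setminus \Omega_t^+ = \Omega_t^- \setminus \Omega^{\prime\prime}$, which follows from the two disjoint decompositions of $\Omega$ together with $\Omega_t^+ \subset \Omega^\prime$ and $\Omega^{\prime\prime} \subset \Omega_t^-$. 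The bound on the right-hand side is independent of $A$, so $d_{ETV}(P^{ex}_t, P^{ex}_\infty) \leq 2 P(\Omega^\prime \setminus \Omega_t^+)$. Since $\Omega_t^+ \uparrow \Omega^\prime$ implies $\Omega^\prime \setminus \Omega_t^+ \downarrow \emptyset$, continuity from above of the (finite) measure $P$ gives $P(\Omega^\prime \setminus \Omega_t^+) \to 0$, completing the proof.

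The only nontrivial step is the closed-form expression for $P^{ex}_t(A)$: once one verifies by induction on $t$ that the updates preserve the absolute value of singletons while matching the sign to whether $\omega$ currently sits in the actual or latent portion, everything else reduces to additivity and the standard continuity properties of probability measures. I therefore expect the inductive singleton analysis, together with justifying the identity $\Omega^\prime \setminus \Omega_t^+ = \Omega_t^- \setminus \Omega^{\prime\prime}$, to be the main (though still mild) bookkeeping obstacle.
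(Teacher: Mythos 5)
Your proposal is correct and follows essentially the same route as the paper's proof: both express $P^{ex}_t$ and $P^{ex}_\infty$ through the oracle measure $P$ via the decompositions $\Omega=\Omega_t^+\sqcup\Omega_t^-$ and $\Omega=\Omega'\sqcup\Omega''$, reduce the difference $|P^{ex}_t(A)-P^{ex}_\infty(A)|$ to $2P\bigl(A\cap(\Omega'\setminus\Omega_t^+)\bigr)$ (the paper writes this as $2P(\mathbf{A}^+\cap\Omega_t^-)$, the same set), and conclude by continuity of $P$ along $\Omega'\setminus\Omega_t^+\downarrow\emptyset$. Your version is, if anything, slightly tidier: you give a bound uniform in $A$ before taking the supremum, whereas the paper informally treats the supremum as attained at some maximizing event $\mathbf{A}$.
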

The following claim is especially important.
\begin{proposition}\label{prop10}
If $\Omega^+_t \uparrow \Omega$, then $P^{ex}_\infty(\Omega)=1$.
\end{proposition}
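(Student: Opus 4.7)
The plan is to unpack the definition of $P^{ex}_\infty$ on the full space $\Omega$ and use countable additivity (axiom (ii*)) to split the computation into the two pieces the definition naturally addresses.

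First I would observe that, since $\Omega = \Omega^+_t \sqcup \Omega^-_t$ for every $t$ with $\Omega^+_t$ nondecreasing and $\Omega^-_t$ nonincreasing, passing to the limit yields the disjoint decomposition
\[
\Omega \;=\; \Bigl( \bigcup_{t \in \mathbb{N}_0} \Omega^+_t \Bigr) \sqcup \Bigl( \bigcap_{t \in \mathbb{N}_0} \Omega^-_t \Bigr),
\]
since $\bigcap_t \Omega^-_t = \Omega \setminus \bigcup_t \Omega^+_t$. Under the hypothesis $\Omega^+_t \uparrow \Omega$, the first set equals $\Omega$ and the second set is empty.

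Next I would apply (ii*) to this disjoint decomposition with $A = \Omega$, which gives
\[
P^{ex}_\infty(\Omega) \;=\; P^{ex}_\infty\Bigl( \Omega \cap \bigcup_{t \in \mathbb{N}_0} \Omega^+_t \Bigr) + P^{ex}_\infty\Bigl( \Omega \cap \bigcap_{t \in \mathbb{N}_0} \Omega^-_t \Bigr).
\]
By the definition of $P^{ex}_\infty$ given just before the statement, the first term equals $P\bigl(\Omega \cap \bigcup_t \Omega^+_t\bigr) = P(\Omega) = 1$, since $P$ is the oracle probability measure on $(\Omega,\mathcal{F})$ and $\bigcup_t \Omega^+_t = \Omega$. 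The second term equals $-P\bigl(\Omega \cap \bigcap_t \Omega^-_t\bigr) = -P(\emptyset) = 0$.

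Combining these yields $P^{ex}_\infty(\Omega) = 1 + 0 = 1$, as required. I do not expect any real obstacle here: the hypothesis $\Omega^+_t \uparrow \Omega$ is precisely what reduces the defining formulas for $P^{ex}_\infty$ to the regular probability $P$ on $\Omega$ and a vacuous contribution from the latent part. The only subtlety worth flagging in the write-up is the justification that the two pieces of $\Omega$ appearing in the definition of $P^{ex}_\infty$ are indeed disjoint and exhaust $\Omega$, which is what legitimizes the use of (ii*).
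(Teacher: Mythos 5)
Your proof is correct, but it takes a genuinely different route from the paper's. The paper argues by contradiction: assuming $P^{ex}_\infty(\Omega)\neq 1$ forces $P^{ex}_\infty(\Omega)<1$ by (i*), from which the authors infer the existence of a set $A\in\mathcal{F}$ with $P^{ex}_\infty(A)\leq 0$; such a set must lie in $\Omega^-_\infty$, which is empty when $\Omega^+_t\uparrow\Omega$, giving the contradiction. You instead compute $P^{ex}_\infty(\Omega)$ directly from the defining formulas $P^{ex}_\infty(A\cap\bigcup_t\Omega^+_t)=P(A\cap\bigcup_t\Omega^+_t)$ and $P^{ex}_\infty(A\cap\bigcap_t\Omega^-_t)=-P(A\cap\bigcap_t\Omega^-_t)$ with $A=\Omega$, reducing the claim to $P(\Omega)=1$ for the oracle measure. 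Your approach is arguably tighter: the paper's step from ``$P^{ex}_\infty(\Omega)<1$'' to ``there exists $A$ with $P^{ex}_\infty(A)\leq 0$'' is left implicit (it rests on (iii*) applied to the partition $\mathcal{E}$), whereas your argument needs nothing beyond the explicit definition of $P^{ex}_\infty$ and the observation that $\bigcap_t\Omega^-_t=\emptyset$ under the hypothesis. In fact the additivity step in your write-up is not even strictly necessary, since with $\bigcup_t\Omega^+_t=\Omega$ the first defining formula already reads $P^{ex}_\infty(\Omega)=P(\Omega)=1$. What the paper's contradiction argument buys in exchange is that it exposes the structural reason the result holds --- negative mass can only live on the latent residue $\Omega^-_\infty$ --- without appealing to the particular oracle-based construction of $P^{ex}_\infty$.
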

This result implies that if $\Omega^+_t \uparrow \Omega$, then $P^{ex}_\infty$ is a regular probability measure. Indeed, it is easy to see that it satisfies the (countably additive) Kolmogorovian axioms for regular probability measures.


Now call $\mathcal{P}^{ex}_\infty \subset \Delta^{ex}(\Omega,\mathcal{F})$ the following set
$$\mathcal{P}^{ex}_\infty:=\left\lbrace{P^{ex}_\infty \in \Delta^{ex}(\Omega,\mathcal{F}) : d_{ETV}(P^{ex}_t\text,P^{ex}_\infty)\xrightarrow[t \rightarrow \infty]{}0 \text{, } P^{ex}_t \in \mathcal{P}^{ex}_t}\right\rbrace.$$
That is, $\mathcal{P}^{ex}_\infty$ is the set of limits (in the extended total variation metric) of the sequences $(P^{ex}_t)$ whose elements $P^{ex}_t$ belong to $\mathcal{P}^{ex}_t$. 

The Hausdorff distance between an element of $(\mathcal{P}^{ex}_t)_{t \in \mathbb{N}_0}$ and $\mathcal{P}^{ex}_\infty$ is given by
\begin{align}\label{eq26}
\begin{split}
{d}_H\left( \mathcal{P}^{ex}_{t},\mathcal{P}^{ex}_\infty \right)= \max \bigg\{ &\sup_{P^{ex}_t \in \mathcal{P}^{ex}_t} \inf_{P^{ex}_\infty \in \mathcal{P}^{ex}_\infty}  d_{ETV}\left( P_t^{ex},P_\infty^{ex} \right), \\& \sup_{P^{ex}_\infty \in \mathcal{P}^{ex}_\infty} \inf_{P^{ex}_t \in \mathcal{P}^{ex}_t} d_{ETV}\left( P_t^{ex},P_\infty^{ex} \right) \bigg\}.
\end{split}
\end{align}
Then, the sequence $(\mathcal{P}^{ex}_t)_{t \in \mathbb{N}_0}$ of successive updates of set $\mathcal{P}^{ex}_0$ representing the initial beliefs of the agent facing ambiguity converges in the Hausdorff distance to $\mathcal{P}^{ex}_\infty$. This result is an immediate consequence of Proposition \ref{prop11}: every element of $\mathcal{P}^{ex}_t$ converges to an element of $\mathcal{P}^{ex}_\infty$, so the distance between the ``borders'' of these two sets -- measured by the Hausdorff metric -- converges to $0$.

Now, there are two possible scenarios: one in which we continue discovering the elements of the state space until we retrieve the full $\Omega$ we specified ex ante (this corresponds to $\Omega_t^+ \uparrow \Omega$), and another one in which we discover that the actual sample space associated with our experiment is $\Omega^\prime \subsetneq \Omega$. 
In the first scenario, any $P^{ex}_\infty \in \mathcal{P}^{ex}_\infty$ is a regular probability measure, a consequence of Proposition \ref{prop10}. So, after discovering the composition of the state space, we have that $(\Omega,\mathcal{F},{P}^{ex}_\infty)$ is a regular probability space, for all ${P}^{ex}_\infty \in \mathcal{P}^{ex}_\infty$.


In the second scenario, $\Omega^\prime$ can be finite or countable. In the former case, we have that, for some $T \in \mathbb{N}$, $\Omega_T^+ \equiv \Omega^\prime \subsetneq \Omega$, so $\sum_{E_T^+ \in \mathcal{E}^+_T} P^{ex}_T(E_T^+)=q<1$ because, by \eqref{eq35} and \eqref{eq4_1},
$$\sum_{E_T^+ \in \mathcal{E}^+_T} P^{ex}_T(E_T^+) + \sum_{E_T^- \in \mathcal{E}^-_T} \left| P^{ex}_T(E_T^-) \right|=1.$$
This may seem problematic: $\mathcal{P}^{ex}_t$ converges to $\mathcal{P}^{ex}_T$ (that is, $\mathcal{P}^{ex}_\infty$ coincides with $\mathcal{P}^{ex}_T$), which  is not a set of regular probability measures. To solve this issue, we need to describe the regular probability measure induced by every $P^{ex}_T \in \mathcal{P}^{ex}_T$. It would be desirable to find $\tilde{P}$ such that $\tilde{P}(A)=cP^{ex}_T(A)$, for all $A \in \mathcal{F}^+_T$. This because such a regular probability measure preserves the ratios between extended probabilities of the elements of $\mathcal{F}^+_T$, that is,
$$\frac{\tilde{P}(A)}{\tilde{P}(B)}=\frac{cP^{ex}_T(A)}{cP^{ex}_T(B)}=\frac{P^{ex}_T(A)}{P^{ex}_T(B)},$$
for all $A,B \in \mathcal{F}^+_T$ such that $P^{ex}_T(B) \neq 0$.\footnote{Notice that if the ratio is greater than $1$, we deem $A$ more likely than $B$, and vice versa if the ratio is smaller than $1$. We deem $A$ and $B$ equally likely if the ratio is exactly $1$. Because we want to preserve this behavioral interpretation when we move to the regular probabilities induced by the elements of $\mathcal{P}^{ex}_T$, we want the ratio to be maintained.} To find such a $c$, the following needs to hold
$$\sum_{E_T^+ \in \mathcal{E}^+_T} cP^{ex}_T(E_T^+)=1,$$
which happens if and only if
$$c=\frac{1}{\sum_{E_T^+ \in \mathcal{E}^+_T} P^{ex}_T(E_T^+)}=\frac{1}{P^{ex}_T(\Omega_T^+)}.$$
Hence, $\tilde{P}=cP^{ex}_T$ is the regular probability measure induced by $P^{ex}_T$ that preserves the ratios between extended probabilities of elements of $\mathcal{F}^+_T$. Clearly, this holds for all $P^{ex}_T \in \mathcal{P}^{ex}_T$, so $(\Omega^+_T, \mathcal{F}^+_T, \tilde{P})$ is a regular probability space, for all $\tilde{P} \in \tilde{\mathcal{P}}$, where $\tilde{\mathcal{P}}$ is the set of regular probabilities induced by the elements of $\mathcal{P}^{ex}_T$. 

If $\Omega^\prime$ is countable, we proceed in a similar way. We still want to preserve the ratios between extended probabilities of the elements of $\mathcal{F}^\prime$, so we have to find $c$ such that $\sum_{E \in \mathcal{E}^\prime}cP^{ex}_\infty(E)=1$. This happens when 
$$c=\frac{1}{\sum_{E \in \mathcal{E}^\prime}P^{ex}_\infty(E)}=\frac{1}{P^{ex}_\infty(\Omega^\prime)},$$
so $\tilde{P}=cP^{ex}_\infty$ is the regular probability measure we were looking for. This holds for all $P^{ex}_\infty \in \mathcal{P}^{ex}_\infty$, so $(\Omega^\prime,\mathcal{F}^\prime,\tilde{P})$ is a regular probability space, for all $\tilde{P} \in \tilde{\mathcal{P}}$.

\begin{remark}\label{rem21}
Notice that, in the first scenario ($\Omega_t^+ \uparrow \Omega$), every element $P^{ex}_\infty \in \mathcal{P}^{ex}_\infty$ coincides with its corresponding oracle probability measure $P \in \mathcal{P}$ we expressed ex ante on the whole $\Omega$. Indeed, we have that $P^{ex}_\infty(E)=|P^{ex}_\infty(E)|$, for all $E \in \mathcal{E}$, which implies $P^{ex}_\infty(A)=|P^{ex}_\infty(A)|$, for all $A \in\mathcal{F}$. But then, by \eqref{eq35}, we know that $\sum_{E \in \mathcal{E}} P^{ex}_\infty(E)=1$, and that $P^{ex}_\infty(A)=P(A)$, for all $A \in\mathcal{F}$. This should not surprise: the updating procedure we described earlier is not based on collecting new data like the Bayesian one, nor on repeating the experiment many times like the frequentist one. Rather, it is based on discovering the true composition of the state space associated with our inference procedure. Then, it is natural to retrieve the opinion we expressed ex ante on the state space $\Omega$ once we get the confirmation that the true state space is indeed $\Omega$. This also reconciles well with the interpretation we gave to negative extended probabilities: once we are given the possibility to enter the bets we were denied before, we tend to agree with our doppelgänger who had the opportunity to bet on those events in the first place (Allen's principle of conservation of knowledge holds).

Notice also that $\tilde{P}$, albeit not equal, is proportional to $P^{ex}_\infty$, $\tilde{P}=cP^{ex}_\infty$, for every $\tilde{P} \in \tilde{\mathcal{P}}$ and its corresponding $P^{ex}_\infty \in \mathcal{P}^{ex}_\infty$. The interpretation is immediate: we maintain our opinion on the elements of the sample space $\Omega^\prime \subsetneq \Omega$ that pertains to our experiment.
\end{remark}
If the procedure for discovering the composition of the state space is equivalent to an urn with replacement, everything we discussed so far still holds with just two differences:
\begin{itemize}
\item[\textbullet] if we extract twice or more times the same element, its extended probability flips sign the first time, and then stays constant;
\item[\textbullet] the convergence may be slower, because we may need more extractions from the urn to learn the true composition (because we can extract twice or more times the same element).
\end{itemize}

Let us give a simple example that illustrates one of the possible situations in which the analysis depicted so far can be put to use.

\begin{example}\label{ex_species}
We consider a species sampling problem in the  field of ecology. Suppose we want to know the number of bird species that inhabit a certain region throughout the year. What we can do is to start our analysis by letting $\Omega=\mathbb{N}$ and $\Omega_0^+=\{1,\ldots,n\}$, where $n$ is the number of species that  inhabit a region similar to the one of interest throughout the year. Then, we specify a set $\mathcal{P}$ of probability measures on $\mathbb{N}$, e.g. a collection $\{\text{Geom}(p)\}_{p\in [0,1]}$ of geometric distributions having parameter $p\in [0,1]$. We specify a set  of probabilities because we are not able to express our initial opinion via a unique probability measure (we face ambiguity). We specify the probability measures on the whole number field $\mathbb{N}$ because we do not know exactly the composition of our state space. Then, after eliciting the set $\mathcal{P}^{ex}_0$ of extended probability measures induced by (the elements of) $\mathcal{P}$, we begin the ex ante analysis described in the present work. After collecting observations for an entire year, we end up discovering that the state space associated with our experiment is $\Omega^\prime=\{1,\ldots,m\} \subsetneq \mathbb{N}$, where $m \geq n$ is the number of species we counted during the year. We recover also the set $\tilde{\mathcal{P}}$ of regular probability measures induced by $\mathcal{P}^{ex}_\infty$. Now, the ``real'' statistical analysis can take place. Indeed, we know that $m$ is the {maximum} number of species that live in the region during the year, but it does not take into account migrations to or from the region itself. Hence,  the number of bird species that inhabit the region throughout the year may  well be smaller than $m$. So, every $\tilde{P} \in \tilde{\mathcal{P}}$ will be such that $\tilde{P}(\{\omega_k\})\in [0,1]$, and $\sum_{k=1}^m \tilde{P}(\{\omega_k\})=1$, where $\omega_k=k$, for all $k \in \{1,\ldots,m\}$. Gathering data $y_1,\ldots,y_\ell$ and updating these $\tilde{P}$'s via Bayesian conditioning, we obtain the set $\{\tilde{P}(\cdot \mid y_1,\ldots,y_\ell)\}_{\tilde{P} \in \tilde{\mathcal{P}}}$ of posterior (regular) probability measures. This gives a robust analysis: for all $\omega_k \in \Omega^\prime$, the posterior probability of $\omega_k$ being the correct number of species belongs to the interval 
$$\left[ \underline{\tilde{P}}(\{\omega_k\} \mid y_1,\ldots,y_\ell),\overline{\tilde{P}}(\{\omega_k\} \mid y_1,\ldots,y_\ell) \right],$$
where the lower bound is a lower (regular) probability, and the upper bound is an upper (regular) probability. The narrower the interval, the less imprecise our beliefs resulting from the analysis.
\demo
\end{example}

A criticism that can be made of our ex ante analysis is the following: why should the scholar be concerned with the exact composition of the state space? It should be enough to specify it as richly as possible, and then proceed to a regular statistical analysis. There are two responses to such a critique. First, as pointed out in Remark \ref{rem_on_omega}, in doing so the scholar would not be respecting the condition listed in \cite{tsitsiklis} that the state space must have the right granularity depending on the statistical experiment they are interested in. Second, in the case that the state space associated with our statistical experiment is $\Omega^\prime \subsetneq \Omega$, working with probability measures supported on the whole space $\Omega$ of apparently possible states may be computationally costly. Our ex ante analysis allows the scholar to focus on the ``minimal'' state space -- the one containing only the necessary states -- that is more meaningful to the analysis.

\section{Upper and lower extended probabilities}\label{ulep}
Fix any $t \in \mathbb{N}_0$, and consider the ``boundary elements'' of the set $\mathcal{P}^{ex}_t$,
\begin{align}\label{elp}
\underline{P}^{ex}_t(A)=\inf\limits_{P^{ex} \in \mathcal{P}^{ex}_t} P^{ex}(A)
\end{align}
and
\begin{align}\label{eq7}
\overline{P}^{ex}_t(A)=\sup\limits_{P^{ex} \in \mathcal{P}^{ex}_t}{P}^{ex}(\Omega)-\underline{P}^{ex}_t(A^c)=\sup\limits_{P^{ex} \in \mathcal{P}^{ex}_t} P^{ex}(A),
\end{align} 
for all $A \in \mathcal{F}$. They are not extended probabilities; we call $\underline{P}^{ex}_t(A)$ a {lower extended probability measure}, and $\overline{P}^{ex}_t(A)$ an {upper extended probability measure}. Notice that upper extended probability measures differ from upper regular probability measures. These latter are defined as $1$ minus the lower regular probability of the complement of the event we are interested in. In \eqref{eq7} we give a similar conjugate type of definition, but we cannot write $1$, because we do not require that the lower extended probability of $\Omega$ is $1$.

Given a generic lower extended probability $\underline{P}^{ex}$ and a generic event $A \in\mathcal{F}$, we interpret $\underline{P}^{ex}(A)$ as follows. If $\underline{P}^{ex}(A)=p>0$, then $p$ represents the supremum price that we are willing to pay to enter a bet on $A$ that gives us \$$1$ if $A$ takes place and \$$0$ otherwise. If $\underline{P}^{ex}(A)=0$, then in the most conservative of our mental states we deem $A$ impossible to take place. Finally, if $\underline{P}^{ex}(A)=q<0$, then $|q|$ represents the infimum selling price at which our doppelgänger takes bets on $A$ that pay \$$1$ if $A$ takes place and \$$0$ otherwise. 
As we can see, this interpretation captures the ideas of worst case scenario and of prudent behavior. It can be seen as a betting scheme analogous to the one in \cite[section 2.3.1]{walley}, but with monetary instead of utiles outcomes, and extended to probabilities that can take on negative values as well.

Both lower and upper extended probabilities are {extended Choquet capacities}. A generic extended Choquet capacity is defined as a set function $\nu^{ex}:\mathcal{F} \rightarrow \mathbb{R}$ such that
\begin{itemize}
\item[(EC1)] $\nu^{ex}(\emptyset)=0$,
\item[(EC2)] $\nu^{ex}(A) \in [-1,1]$, for all $A \in \mathcal{F}$, 
\item[(EC3)] for any $A, B \in \mathcal{F}$ such that $A \subset B$, if $\nu^{ex}(A) \geq 0$, $\nu^{ex}(B) \geq 0$, and $\nu^{ex}(B \cap A^c) \geq 0$, then $\nu^{ex}(A) \leq \nu^{ex}(B)$. If instead $\nu^{ex}(A) \leq 0$, $\nu^{ex}(B) \leq 0$, and $\nu^{ex}(B \cap A^c) \leq 0$, then $\nu^{ex}(A) \geq \nu^{ex}(B)$.
\end{itemize}
As we can see, we do not require countable additivity (ii*) to hold for extended capacities. An extended probability measure is an additive extended capacity.

It is easy to see that upper and lower extended probability measures satisfy (EC1)-(EC3); in addition, lower extended probabilities are superadditive, while upper extended probabilities are subadditive. That is, for all $A,B \in \mathcal{F}$,
\begin{equation}\label{eq55}
\underline{P}^{ex}_t(A \sqcup B) \geq \underline{P}^{ex}_t(A)+\underline{P}^{ex}_t(B)
\end{equation}
and
\begin{equation}\label{eq56}
\overline{P}^{ex}_t(A \sqcup B) \leq \overline{P}^{ex}_t(A)+\overline{P}^{ex}_t(B).
\end{equation}
These inequalities come immediately from the properties of the infimum and supremum operators. 

\begin{remark}\label{lower_coherence}
Notice that the behavioral interpreatation that we gave to lower extended probabilities entails that they can be specified even without eliciting a set of extended probability measures first. To this extent, our behavioral interpretation can be called minimal, similarly to \cite[section 2.3.1]{walley}. An immediate question the reader may ask is: ``If we were to specify a lower extended probability without resorting to a set of extended probabilities, are we sure it is subadditive?''. The answer is yes, under a mild assumption: Example \ref{ex_coh} -- based on the example in \cite[section 1.6.4]{walley} -- shows that if lower extended probabilities avoid sure loss, then they are superadditive, and Theorem \ref{lepc} shows that if $\underline{P}^{ex}$ can be obtained as the infimum of a set of extended probabilities, then it avoids sure loss.

We first define \textit{sure loss} for lower extended probabilities. It is the immediate lower counterpart of Definition \ref{coherence}.
\begin{definition}\label{lower_coh_def}
Lower extended probability $\underline{P}^{ex}$ avoids sure loss if no Dutch books can be made against the punter, that is, if we cannot find a finite collection $\{B_j\}_{j=1}^n \subset \mathscr{B}^\prime$ along with numbers $\{s_j\}_{j=1}^n \subset \mathbb{R}_+$ such that
\begin{align}\label{eq27_2}
\sup_{\omega\in\Omega}\left\lbrace{\sum_{j=1}^n s_j \left[ \mathbbm{1}_{B_j}(\omega)-\underline{P}^{ex}(B_j) \right]}\right\rbrace<0,
\end{align}
where $\mathscr{B}^\prime$ is the sigma-algebra generated by the collection $\{B \subset \Omega: \underline{P}^{ex}(B)\geq 0 \text{ and } B\cap C = \emptyset \text{, } \forall C \in\mathcal{F} \text{, } \underline{P}^{ex}(C)<0\}$.

\end{definition}
We have the following interesting result, that is a version of the lower envelope theorem in  \cite[Corollary 2.8.6]{walley}.\footnote{Notice that the lower envelope theorem in  \cite[Corollary 2.8.6]{walley} comprises an envelope of probabilities that are merely finitely additive, and it states a necessary and sufficient condition. We plan to prove the opposite direction of Theorem \ref{lepc} in future work.}
\begin{theorem}\label{lepc}
Consider a generic lower extended probability $\underline{P}^{ex}$ defined on a measurable space $(\Omega,\mathcal{F})$. If there exists a nonempty set $\mathcal{P}^{ex}$ of extended probabilities on $(\Omega,\mathcal{F})$ such that $\underline{P}^{ex}(A)=\inf_{P^{ex} \in \mathcal{P}^{ex}}P^{ex}(A)$, for all $A \in \mathcal{F}$, then $\underline{P}^{ex}$ avoids sure loss.
\end{theorem}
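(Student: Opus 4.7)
The plan is to proceed by contradiction, mirroring the standard lower envelope argument for coherent lower previsions (cf.\ Walley's Corollary 2.8.6), and to leverage Theorem \ref{alw_coh}, which guarantees that every single extended probability in $\mathcal{P}^{ex}$ is already coherent.

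First, I would suppose $\underline{P}^{ex}$ is not coherent. By Definition \ref{lower_coh_def}, there exist a finite collection $\{B_j\}_{j=1}^n \subset \mathscr{B}'$ and coefficients $\{s_j\}_{j=1}^n \subset \mathbb{R}$ such that
\begin{equation*}
\mathfrak{f}(\omega) := \sum_{j=1}^n s_j \bigl[\mathbbm{1}_{B_j}(\omega) - \underline{P}^{ex}(B_j)\bigr] < 0 \quad \text{for all } \omega \in \Omega.
\end{equation*}
Because $\mathfrak{f}$ is a finite linear combination of indicators, it is a simple function taking at most $2^n$ distinct values. Strict negativity therefore upgrades to a uniform gap: $\sup_{\omega \in \Omega} \mathfrak{f}(\omega) \leq -\delta$ for some $\delta > 0$. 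This ``lifting'' from strict to uniform strict inequality is what will allow the contradiction in the next step to survive a small perturbation.

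Second, I would reparametrise the Dutch book relative to an arbitrary $P^{ex} \in \mathcal{P}^{ex}$ by writing
\begin{equation*}
\mathfrak{f}(\omega) = \sum_{j=1}^n s_j \bigl[\mathbbm{1}_{B_j}(\omega) - P^{ex}(B_j)\bigr] + \sum_{j=1}^n s_j \bigl[P^{ex}(B_j) - \underline{P}^{ex}(B_j)\bigr].
\end{equation*}
The lower envelope hypothesis $\underline{P}^{ex}(B_j) \leq P^{ex}(B_j)$ makes the correction term sign-definite coordinate by coordinate. When all $s_j \geq 0$, the correction is non-negative, so $\sum_j s_j[\mathbbm{1}_{B_j}(\omega) - P^{ex}(B_j)] \leq \mathfrak{f}(\omega) \leq -\delta < 0$ for all $\omega$, which is a Dutch book against $P^{ex}$ and contradicts Theorem \ref{alw_coh}.

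Third, to absorb indices with $s_j < 0$, I would use the infimum characterisation: for every $\epsilon > 0$ and every such $j$, there exists $P^{ex}_j \in \mathcal{P}^{ex}$ with $P^{ex}_j(B_j) - \underline{P}^{ex}(B_j) < \epsilon$. Choosing $\epsilon$ small enough that $\sum_{j:\,s_j<0} |s_j|\,\epsilon < \delta/2$, and producing a single $P^{ex}_\star \in \mathcal{P}^{ex}$ that $\epsilon$-approximates $\underline{P}^{ex}(B_j)$ simultaneously on all these coordinates, yields $\sum_j s_j[\mathbbm{1}_{B_j}(\omega) - P^{ex}_\star(B_j)] \leq -\delta/2 < 0$ for all $\omega$, again contradicting the coherence of $P^{ex}_\star$ guaranteed by Theorem \ref{alw_coh}.

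The principal obstacle is precisely the last step: producing one element of $\mathcal{P}^{ex}$ that approximates $\underline{P}^{ex}(B_j)$ uniformly over the negatively-weighted indices. This is automatic when $\mathcal{P}^{ex}$ is closed under convex combinations or weakly compact, but in general must be handled via a separation (Hahn--Banach) or minimax argument to transfer the Dutch book from $\underline{P}^{ex}$ to a concrete element of $\mathcal{P}^{ex}$. In the simpler and more standard ``avoiding sure loss'' formulation (only $s_j \geq 0$), the second step alone closes the argument and the obstacle disappears.
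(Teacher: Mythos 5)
Your overall architecture (contrapositive, uniform negative gap on the simple function $\mathfrak{f}$, transfer of the Dutch book from $\underline{P}^{ex}$ to a concrete element of $\mathcal{P}^{ex}$) is sound, and your second step is in substance what the paper does. The paper argues directly rather than by contradiction: from the coherence of each $P^{ex}\in\mathcal{P}^{ex}$ (Theorem \ref{alw_coh}) it asserts $\sum_{j} s_j \mathbbm{1}_{B_j}(\omega) \geq \sum_{j} s_j P^{ex}(B_j)$, takes the infimum over $\mathcal{P}^{ex}$ on the right, and then uses
\[
\inf_{P^{ex}\in\mathcal{P}^{ex}} \sum_{j=1}^n s_j P^{ex}(B_j) \;\geq\; \sum_{j=1}^n s_j \inf_{P^{ex}\in\mathcal{P}^{ex}} P^{ex}(B_j)\;=\;\sum_{j=1}^n s_j\,\underline{P}^{ex}(B_j).
\]
That interchange of infimum and sum is valid precisely when every $s_j\geq 0$ --- exactly the case your second step already closes. (Your reading of coherence as $\sup_{\omega}\mathfrak{f}(\omega)\geq 0$ is also the correct one; the paper's pointwise ``for all $\omega$'' is stronger than what Theorem \ref{alw_coh} delivers, but your contrapositive-with-gap formulation sidesteps this.)

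Your third step, however, is a genuine gap, and no separation or minimax argument will repair it. First, simultaneous $\epsilon$-attainment of the infimum fails in general: with $\mathcal{P}^{ex}=\{P_1,P_2\}$ on $\Omega=\{1,2\}$, $P_1(\{1\})=0.3=P_2(\{2\})$ and $P_1(\{2\})=0.7=P_2(\{1\})$, we get $\underline{P}^{ex}(\{1\})=\underline{P}^{ex}(\{2\})=0.3$, yet no single element of $\mathcal{P}^{ex}$ is within any $\epsilon<0.4$ of both infima. Second, and more fundamentally, with negative stakes permitted the statement you are trying to transfer is false in that same example: taking $B_1=\{1\}$, $B_2=\{2\}$ (both lie in $\mathscr{B}^\prime$ here) and $s_1=s_2=-1$ gives $\sum_j s_j[\mathbbm{1}_{B_j}(\omega)-\underline{P}^{ex}(B_j)]=-1+0.6=-0.4<0$ for every $\omega$, i.e.\ a Dutch book in the sense of Definition \ref{lower_coh_def} against the lower envelope of two coherent probability measures. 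This is the familiar fact that a lower prevision is only a supremum buying price, so ``selling'' at it (negative stakes) incurs sure loss. The theorem is therefore only provable under the restriction $s_j\geq 0$ in the definition of a Dutch book against $\underline{P}^{ex}$; under that restriction your second step suffices and coincides with the paper's argument, while the paper's written proof silently relies on the same restriction at the $\inf$--$\sum$ interchange displayed above.
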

The following example shows that if $\underline{P}^{ex}$ avoids sure loss, then it is superadditive.
\begin{example}\label{ex_coh}
Pick two mutually exclusive events $A$ and $B$. By our behavioral interpretation, the highest amount we are willing to pay to get \$$1$ if $A$ occurs (or the highest amount we would deem reasonable to lose if we were given the possibility to enter the bet) is $\underline{P}^{ex}(A)$. The same holds for $\underline{P}^{ex}(B)$. Because we want to avoid sure loss, the net outcome is equivalent to paying $\underline{P}^{ex}(A) + \underline{P}^{ex}(B)$ to get \$$1$ if $A \sqcup B$ occurs. Now, $\underline{P}^{ex}(A \sqcup B)$ is the highest price we are willing to pay to obtain \$$1$ if $A \sqcup B$ occurs. So we recover the superadditivity constraint
$$\underline{P}^{ex}(A \sqcup B) \geq \underline{P}^{ex}(A) + \underline{P}^{ex}(B).$$
Similarly, an upper lower probability $\overline{P}^{ex}$ should satisfy the subadditivity constraint $\overline{P}^{ex}(A \sqcup B) \leq \overline{P}^{ex}(A) + \overline{P}^{ex}(B)$.
\demo
\end{example}
\end{remark}

An important concept worth introducing is the {core} of a lower extended probability measure.
\begin{definition}\label{core}
Given a generic lower extended probability $\underline{P}^{ex}$, we call {core} of $\underline{P}^{ex}$ the set
$$\text{core}(\underline{P}^{ex}):=\left\lbrace{P^{ex} \in \Delta^{ex}(\Omega,\mathcal{F}) : P^{ex}(A) \geq \underline{P}^{ex}(A) \text{, } \forall A \in \mathcal{F} \text{ and } P^{ex}(\Omega) = \underline{P}^{ex}(\Omega) }\right\rbrace.$$
\end{definition}
This definition tells us that the core of $\underline{P}^{ex}$ is the set of all suitably normalized extended probabilities that setwise dominate $\underline{P}^{ex}$. Notice that in general it may be empty, and that
\begin{align*}
\text{core}(\underline{P}^{ex}) &= \left\lbrace{ P^{ex} \in \Delta^{ex}(\Omega,\mathcal{F}) : \underline{P}^{ex} \leq {P}^{ex} \leq \overline{P}^{ex} }\right\rbrace\\
&= \left\lbrace{ P^{ex} \in \Delta^{ex}(\Omega,\mathcal{F}) : P^{ex}(A) \leq \overline{P}^{ex}(A) \text{, } \forall A \in \mathcal{F} \text{ and } P^{ex}(\Omega) = \underline{P}^{ex}(\Omega) }\right\rbrace,
\end{align*}
so the core can be seen as the set of extended probabilities ``sandwiched'' between lower extended probability $\underline{P}^{ex}$ and upper extended probability $\overline{P}^{ex}$, as well as the set of extended probabilities setwise dominated by $\overline{P}^{ex}$. The following is a corollary to Theorem \ref{lepc}.
\begin{corollary}\label{cor_coh}
If $\text{core}(\underline{P}^{ex})\neq \emptyset$, then $\underline{P}^{ex}$ avoids sure loss.
\end{corollary}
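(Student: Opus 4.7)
The plan is to deduce coherence of $\underline{P}^{ex}$ directly from a Dutch-book argument, by transferring any hypothetical Dutch book against $\underline{P}^{ex}$ to one against an arbitrary element of its core, which is itself coherent by Theorem \ref{alw_coh}. First I would pick any $P^{ex}_\star \in \text{core}(\underline{P}^{ex})$, which exists by hypothesis. Next I would verify that the sigma-algebra $\mathscr{B}^\prime$ of Definition \ref{lower_coh_def} is contained in the sigma-algebra $\mathscr{B}$ associated with $P^{ex}_\star$ in Remark \ref{rem_dutch}: if $B$ is a generator of $\mathscr{B}^\prime$ then $\underline{P}^{ex}(B) \geq 0$, so $P^{ex}_\star(B) \geq \underline{P}^{ex}(B) \geq 0$; moreover every $C$ with $P^{ex}_\star(C) < 0$ satisfies $\underline{P}^{ex}(C) \leq P^{ex}_\star(C) < 0$, so $B \cap C = \emptyset$ by the defining property of the $\mathscr{B}^\prime$-generators. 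Hence every $\mathscr{B}^\prime$-generator is a $\mathscr{B}$-generator, and $\mathscr{B}^\prime \subset \mathscr{B}$.

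Suppose then for contradiction that a Dutch book $(\{B_j\}_{j=1}^n, \{s_j\}_{j=1}^n)$ exists against $\underline{P}^{ex}$ with $B_j \in \mathscr{B}^\prime$ and non-negative stakes $s_j$. Multiplying the inequality $P^{ex}_\star(B_j) \geq \underline{P}^{ex}(B_j)$ by $-s_j \leq 0$ and adding $s_j \mathbbm{1}_{B_j}(\omega)$ termwise gives, for every $\omega \in \Omega$,
\begin{equation*}
\sum_{j=1}^n s_j \bigl[ \mathbbm{1}_{B_j}(\omega) - P^{ex}_\star(B_j) \bigr] \leq \sum_{j=1}^n s_j \bigl[ \mathbbm{1}_{B_j}(\omega) - \underline{P}^{ex}(B_j) \bigr] < 0.
\end{equation*}
Since $\{B_j\}_{j=1}^n \subset \mathscr{B}^\prime \subset \mathscr{B}$, the left-hand side witnesses a Dutch book against $P^{ex}_\star$, contradicting Theorem \ref{alw_coh}. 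Therefore no Dutch book against $\underline{P}^{ex}$ exists, and by Definition \ref{lower_coh_def} $\underline{P}^{ex}$ is coherent.

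The main obstacle is a convention issue about the sign of the stakes $s_j$. Definition \ref{lower_coh_def} writes $\{s_j\} \subset \mathbb{R}$, yet the pointwise comparison above genuinely needs $s_j \geq 0$, which is consistent with interpreting $s_j$ as the payout on a bet that the punter \emph{buys} at the lower-probability price. Negative stakes correspond to \emph{selling} bets, whose fair price is the upper rather than the lower extended probability; such terms should either be excluded by convention or decomposed via the conjugacy relation \eqref{eq7}, using $\overline{P}^{ex}(B_j) \geq P^{ex}_\star(B_j)$ on the sign-negative summands and recombining. A slicker-looking alternative would be to apply Theorem \ref{lepc} to the set $\text{core}(\underline{P}^{ex})$, but this route demands the non-automatic identity $\underline{P}^{ex}(A) = \inf_{P^{ex} \in \text{core}(\underline{P}^{ex})} P^{ex}(A)$ for every $A \in \mathcal{F}$, whose verification would itself require a Hahn--Banach style separation argument and is strictly harder than the direct Dutch-book reduction sketched above.
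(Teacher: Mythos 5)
Your proof is correct where the statement can be true, and it takes a genuinely different route from the paper's. The paper's proof of Corollary \ref{cor_coh} is a one-line appeal to Theorem \ref{lepc} with $\mathcal{P}^{ex}=\text{core}(\underline{P}^{ex})$, which -- exactly as you observe -- tacitly requires the identity $\underline{P}^{ex}(A)=\inf_{P^{ex}\in\text{core}(\underline{P}^{ex})}P^{ex}(A)$ for all $A\in\mathcal{F}$; non-emptiness of the core only yields $\inf_{P^{ex}\in\text{core}(\underline{P}^{ex})}P^{ex}(A)\geq\underline{P}^{ex}(A)$, not the reverse inequality, so the paper's citation is not literally licensed. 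Your argument sidesteps this entirely: by transferring any putative Dutch book to a single dominating core element $P^{ex}_\star$ and invoking Theorem \ref{alw_coh}, you need only the one-sided bound $P^{ex}_\star\geq\underline{P}^{ex}$, and your verification that $\mathscr{B}^\prime\subset\mathscr{B}$ supplies a detail the paper never addresses. What each approach buys: the paper's is shorter but rests on an unproved envelope identity; yours is self-contained and works under strictly weaker information about the core. The one restriction you impose -- non-negative stakes $s_j$ -- is unavoidable rather than a gap in your reasoning. With unrestricted real stakes the corollary is false even for lower envelopes of ordinary probability measures: on $\Omega=\{a,b\}$ with the two measures $(0.3,0.7)$ and $(0.7,0.3)$, so that $\underline{P}^{ex}(\{a\})=\underline{P}^{ex}(\{b\})=0.3$, the choice $B_1=\{a\}$, $B_2=\{b\}$, $s_1=s_2=-1$ gives $\mathfrak{f}(\omega)=-0.4<0$ for every $\omega$. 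Moreover, the paper's own proof of Theorem \ref{lepc} silently assumes $s_j\geq 0$ in the step $\inf_{P^{ex}}\sum_{j}s_jP^{ex}(B_j)\geq\sum_{j}s_j\inf_{P^{ex}}P^{ex}(B_j)$, which fails for negative $s_j$. The sign convention is therefore a defect of Definition \ref{lower_coh_def} shared by the paper, not a flaw introduced by your proof, and your diagnosis of it (selling prices should be governed by $\overline{P}^{ex}$ via \eqref{eq7}) is the right one.
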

A crucial property of the core is the following.
\begin{proposition}\label{core_cpct}
Given a generic lower extended probability $\underline{P}^{ex}$, its core is convex and weak$^\star$-compact.
\end{proposition}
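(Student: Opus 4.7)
The plan is to verify convexity directly from the linear structure of $\Delta^{ex}(\Omega,\mathcal{F})$ and to derive weak$^\star$-compactness from the Banach--Alaoglu theorem together with the weak$^\star$-closedness of the conditions defining the core.

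For convexity, I would fix $P_1^{ex}, P_2^{ex} \in \text{core}(\underline{P}^{ex})$ and $\lambda \in [0,1]$, and set $P_\lambda^{ex} := \lambda P_1^{ex} + (1-\lambda)P_2^{ex}$. Since $\Delta^{ex}(\Omega,\mathcal{F})$ is a linear space (as noted in the excerpt citing \cite{rao}), $P_\lambda^{ex}$ is itself an extended probability. For any $A \in \mathcal{F}$,
$$P_\lambda^{ex}(A) = \lambda P_1^{ex}(A) + (1-\lambda) P_2^{ex}(A) \geq \lambda \underline{P}^{ex}(A) + (1-\lambda)\underline{P}^{ex}(A) = \underline{P}^{ex}(A),$$
and taking $A=\Omega$ yields $P_\lambda^{ex}(\Omega)=\underline{P}^{ex}(\Omega)$, so $P_\lambda^{ex} \in \text{core}(\underline{P}^{ex})$.

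For weak$^\star$-compactness, I would view $\Delta^{ex}(\Omega,\mathcal{F})$ as a subset of the topological dual of an appropriate Banach space of test functions -- the space of bounded $\mathcal{F}$-measurable simple functions under the supremum norm, or, when $\Omega$ is a compact separable space, $C(\Omega)$ (in line with the remark following the definition of $\Delta^{ex}_{Baire}$). By (i$^\star$), $|P^{ex}(A)|\leq 1$ for every $A\in\mathcal{F}$ and every $P^{ex}\in\text{core}(\underline{P}^{ex})$, so the core is contained in the closed unit ball of this dual, which is weak$^\star$-compact by Banach--Alaoglu. It therefore suffices to show that the core is weak$^\star$-closed. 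For each fixed $A$ the evaluation $P^{ex}\mapsto P^{ex}(A)$ is weak$^\star$-continuous, so each half-space $\{P^{ex}:P^{ex}(A)\geq\underline{P}^{ex}(A)\}$ and the hyperplane $\{P^{ex}:P^{ex}(\Omega)=\underline{P}^{ex}(\Omega)\}$ are weak$^\star$-closed, and intersecting over all $A \in \mathcal{F}$ preserves this.

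The main obstacle is verifying that $\Delta^{ex}(\Omega,\mathcal{F})$ is itself weak$^\star$-closed in the dual ball, since condition (iii$^\star$) is nonlinear because of the absolute values. The plan is to argue that countable additivity (ii$^\star$) amounts to equalities between weak$^\star$-continuous evaluation functionals on each partition and hence passes to weak$^\star$-limits, and that (iii$^\star$) survives weak$^\star$-limits because the total-variation norm is weak$^\star$-lower-semicontinuous while the pinning $P^{ex}(\Omega)=\underline{P}^{ex}(\Omega)$ rules out a strict drop in total variation. Assembling these observations, $\text{core}(\underline{P}^{ex})$ becomes the intersection of weak$^\star$-closed sets inside a weak$^\star$-compact ball, and is therefore itself weak$^\star$-compact.
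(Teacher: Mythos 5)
Your proof follows essentially the same route as the paper's: convexity by the identical direct computation, and weak$^\star$-compactness via Banach--Alaoglu together with weak$^\star$-closedness of the core. Two small points of comparison. First, the pointwise bound $|P^{ex}(A)|\leq 1$ from (i*) does not place the core in the closed \emph{unit} ball of the dual you describe: the dual norm there is the total variation norm, and a pointwise bound of $1$ on set values only yields a variation bound of $2$. The paper makes exactly this step explicit, passing from $\sup_{A}|P^{ex}(A)|\leq k$ to $\|P^{ex}\|\leq 2k$ via Dunford--Schwartz before invoking Alaoglu; the conclusion is unaffected since any norm ball is weak$^\star$-compact, but the unit-ball claim as stated is not quite right. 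Second, your argument that (iii*) survives weak$^\star$ limits --- lower semicontinuity of the variation norm plus the pinning $P^{ex}(\Omega)=\underline{P}^{ex}(\Omega)$ --- only rules out a strict drop in $\sum_{E\in\mathcal{E}}|P^{ex}(E)|$ when $\underline{P}^{ex}(\Omega)=1$, because in general one only has $\sum_{E}|P^{ex}(E)|\geq |P^{ex}(\Omega)|$ and extended probabilities need not satisfy $P^{ex}(\Omega)=1$. The paper sidesteps this entirely by asserting closedness of the core ``using the properties of the weak$^\star$ topology,'' so you are in fact being more careful than the source in isolating where the nonlinearity of (iii*) matters, but that particular step still needs a genuine argument (or a restriction such as $\underline{P}^{ex}(\Omega)=1$) to close.
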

Being compact and convex, the core of $\underline{P}^{ex}$ is completely characterized by $\underline{P}^{ex}$. This means that it is enough to know $\underline{P}^{ex}$ to be able to retrieve every element in its core. So in our analysis we can focus on updating $\underline{P}^{ex}_t$ to $\underline{P}^{ex}_{t+1}$, and then require that $\mathcal{P}^{ex}_{t+1}=\text{core}(\underline{P}^{ex}_{t+1})$, instead of updating $\mathcal{P}^{ex}_{t}$ to $\mathcal{P}^{ex}_{t+1}$ elementwise. This justifies our focus in the remainder of this section on studying how to update lower extended probabilities. The procedure to update upper extended probability measures is going to be similar (their relation is described by equation \eqref{eq7}).

Recall that the conditions to perform the update in the additive case are given by (\ref{eq4_1}) and (\ref{eq41}). 
\begin{proposition}\label{prop12}
The sublinear counterpart of (\ref{eq4_1}) is the following,
\begin{align}\label{eq8}
\underline{P}_{t+1}^{ex}(E^+_{t+1,\omega})=\left| \overline{P}_t^{ex}(E^-_{t,\omega}) \right| \quad \text{and} \quad \overline{P}_{t+1}^{ex}(E^+_{t+1,\omega})=\left| \underline{P}_t^{ex}(E^-_{t,\omega}) \right|.
\end{align}
\end{proposition}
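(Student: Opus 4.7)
The plan is to reduce the statement to the elementwise update rule (\ref{eq4_1}) applied to every member of the set $\mathcal{P}^{ex}_t$, and then exchange the infimum/supremum with the sign flip. By the construction in section \ref{update}, the set $\mathcal{P}^{ex}_{t+1}$ consists exactly of the updates $P^{ex}_{t+1}$ of each $P^{ex}_t \in \mathcal{P}^{ex}_t$, so we have a natural bijection between the two sets that preserves the rule $P^{ex}_{t+1}(E^+_{t+1,\omega}) = |P^{ex}_t(E^-_{t,\omega})|$ on the newly observed element.

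First I would use condition (ii) of section \ref{main}, which guarantees that $P^{ex}_t(E^-_{t,\omega}) \leq 0$ for every $P^{ex}_t \in \mathcal{P}^{ex}_t$, since $E^-_{t,\omega} \subset \Omega^-_t$. Consequently $|P^{ex}_t(E^-_{t,\omega})| = -P^{ex}_t(E^-_{t,\omega})$, and the elementwise update reads $P^{ex}_{t+1}(E^+_{t+1,\omega}) = -P^{ex}_t(E^-_{t,\omega})$.

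Next I would take the infimum over $\mathcal{P}^{ex}_{t+1}$ in the first identity. Using the bijection above together with the elementary identity $\inf_{x} (-x) = -\sup_x x$, I obtain
\begin{align*}
\underline{P}^{ex}_{t+1}(E^+_{t+1,\omega})
&= \inf_{P^{ex}_{t+1}\in\mathcal{P}^{ex}_{t+1}} P^{ex}_{t+1}(E^+_{t+1,\omega}) \\
&= \inf_{P^{ex}_{t}\in\mathcal{P}^{ex}_{t}} \bigl(-P^{ex}_{t}(E^-_{t,\omega})\bigr)
= -\sup_{P^{ex}_{t}\in\mathcal{P}^{ex}_{t}} P^{ex}_{t}(E^-_{t,\omega})
= -\overline{P}^{ex}_t(E^-_{t,\omega}).
\end{align*}
Since $\overline{P}^{ex}_t(E^-_{t,\omega})$ is itself nonpositive (being a supremum of nonpositive quantities by condition (ii)), $-\overline{P}^{ex}_t(E^-_{t,\omega}) = |\overline{P}^{ex}_t(E^-_{t,\omega})|$, yielding the first identity. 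The second identity follows by the symmetric argument, taking the supremum instead and using $\sup_x (-x) = -\inf_x x$ together with $\underline{P}^{ex}_t(E^-_{t,\omega}) \leq 0$.

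The main conceptual point to be careful about is the bijection between $\mathcal{P}^{ex}_t$ and $\mathcal{P}^{ex}_{t+1}$: the update described in section \ref{update} is deterministic once $\omega$ is observed, so each $P^{ex}_t$ produces a unique $P^{ex}_{t+1}$ that still satisfies (i)--(iii*), and conversely every element of $\mathcal{P}^{ex}_{t+1}$ arises this way. This lets the infimum/supremum be passed through the update rule without any measure-theoretic complication, making the remainder of the argument purely an application of (\ref{eq4_1}) plus the sign conventions (i)--(ii). No additional analytic hypothesis is needed.
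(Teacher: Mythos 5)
Your proposal is correct and follows essentially the same route as the paper's proof: apply the elementwise rule (\ref{eq4_1}) with the sign convention (ii) to get $P^{ex}_{t+1}(E^+_{t+1,\omega})=-P^{ex}_t(E^-_{t,\omega})$, then pass to the infimum/supremum over the set, which is exactly what the paper's sandwich inequality $-\overline{P}_t^{ex}(E^-_{t,\omega}) \leq -{P}_t^{ex}(E^-_{t,\omega}) \leq -\underline{P}_t^{ex}(E^-_{t,\omega})$ encodes. Your version is in fact slightly more explicit than the paper's, since you spell out the bijection between $\mathcal{P}^{ex}_t$ and $\mathcal{P}^{ex}_{t+1}$ and the identity $\inf_x(-x)=-\sup_x x$ that the paper leaves implicit.
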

It holds when we learn a new element $\omega$ of our sample space, that is, when the new observation $\omega$ belongs to the latent space at time $t$, but ``moves'' to the actual space at time $t+1$. In formulas, $\omega \in \Omega^-_{t}$, but $\omega \in \Omega^+_{t+1}$. The lower extended probabilities assigned to the other elements of $\mathcal{E}$ are held constant.

Notice that we are working with the finest possible partition of $\Omega$, so, as before, the intersection $A \cap E$ between any $A \in \mathcal{F}$ and any $E=\{\omega\} \in \mathcal{E}$ is either the empty set, $A \cap E= \emptyset$, if $\omega \not\in A$, or the element $\omega$ itself, $A \cap E=\{\omega\}=E$, if $\omega \in A$. Hence, for any $t \in \mathbb{N}_0$, for any $E=\{\omega\} \in \mathcal{E}$, and for any $A \in \mathcal{F}$, we have that
\begin{align}\label{eq43}
\underline{P}^{ex}_t(A \cap E)=\begin{cases}
\underline{P}^{ex}_t(E) & \text{if } \omega \in A\\
\underline{P}^{ex}_t(\emptyset)=0 & \text{if } \omega \not\in A
\end{cases}.
\end{align}

Given any $A,B \in \mathcal{F}$, if $\underline{P}_t^{ex}(B) \neq 0$, we define
\begin{align}\label{eq20}
\underline{P}_t^{ex}(A \mid B):=\frac{\inf_{P^{ex} \in \mathcal{P}^{ex}_t} P^{ex}(A \cap B)}{\inf_{P^{ex} \in \mathcal{P}^{ex}_t} P^{ex}(B)}=\frac{\underline{P}_t^{ex}(A \cap B)}{\underline{P}_t^{ex}(B)},
\end{align}
for all $t \in \mathbb{N}_0$. We call it the \textit{extended Geometric rule}. We notice immediately that, combining \eqref{eq43} and \eqref{eq20}, we get
\begin{align}\label{eq44}
\underline{P}^{ex}_t(A \mid E)=\frac{\underline{P}_t^{ex}(A \cap E)}{\underline{P}_t^{ex}(E)}=\begin{cases}
\frac{\underline{P}_t^{ex}(E)}{\underline{P}_t^{ex}(E)}=1 & \text{if } \omega \in A\\
\frac{\underline{P}_t^{ex}(\emptyset)}{\underline{P}_t^{ex}(E)}=0 & \text{if } \omega \not\in A
\end{cases},
\end{align}
for all $E=\{\omega\} \in \mathcal{E}$ such that $\underline{P}_t^{ex}(E) \neq 0$ and all $A \in \mathcal{F}$. So, we have that $\underline{P}^{ex}_t(A \mid E)={P}^{ex}_t(A \mid E)$, for all $A$, all $E$, and all $P^{ex}_t \in \mathcal{P}^{ex}_t$. This is true only because we are working with the finest possible partition of $\Omega$, and because we endorse the extended Geometric rule.

The sublinear counterpart of (\ref{eq41}) is the following
\begin{align}\label{eq9}
\underline{P}^{ex}_{t+1}(E^+_{t+1,\omega}) = \underline{P}^{ex}_t(E^+_{t,\omega}) \geq 0,
\end{align}
for all $t \in \mathbb{N}_0$. This comes from the fact that we draw an element already belonging to the actual space, so we do not need to update its lower extended probability (similarly to what is described in equation \eqref{eq41}). The lower extended probabilities assigned to the other elements of $\mathcal{E}$ are held constant.

At this point, a natural question one may ask is how to compute $\underline{P}^{ex}_{t}(A)$, for any $t$, for any $A \in \mathcal{F}$. It would be tempting to write that
\begin{align*}
\begin{split}
\underline{P}^{ex}_{t}(A)=&\sum\limits_{E^+_{t,j} \in \mathcal{E}^+_t : \underline{P}^{ex}_{t}(E^+_{t,j})\neq 0} \underline{P}^{ex}_{t}(A \mid E^+_{t,j}) \underline{P}^{ex}_{t}(E^+_{t,j})\\ &+ \sum\limits_{E^-_{t,j} \in \mathcal{E}^-_t : \underline{P}^{ex}_{t}(E^-_{t,j})\neq 0} \underline{P}^{ex}_{t}(A \mid E^-_{t,j}) \underline{P}^{ex}_{t}(E^-_{t,j}).
\end{split}
\end{align*}
This would mimic exactly \eqref{eq5_1}, with lower extended probabilities in place of additive extended probabilities. Alas, that would not be true, since lower extended probabilities are not additive. Instead, we have the following.

Fix any $t \in \mathbb{N}_0$. Call $\mathfrak{E}_t \equiv \{E_{t,A}\}$ the collection of elements of $\mathcal{E}$ such that $A \cap E_{t,A} \neq \emptyset$. We index $\mathfrak{E}_t$ to time $t$ to highlight the fact that although its elements stay the same, some of them may ``move'' from the latent to the actual space as time goes by and we collect more obsevations. Since we are working with the finest possible partition of $\Omega$, we can write
\begin{equation}\label{eq50}
A=\bigsqcup_{E_{t,A} \in \mathfrak{E}_t} E_{t,A}.
\end{equation}
In the most general case, some of these $E_{t,A}$'s belong to the actual space, and some to the latent space. Let us denote the former by $E_{t,A}^+$'s and the latter by $E_{t,A}^-$'s. Formally, we have  that $E_{t,A}^+ \cap \Omega^+_t \neq \emptyset$ and $E_{t,A}^+ \cap \Omega^-_t = \emptyset$, and vice versa for the $E_{t,A}^-$'s. Hence, \eqref{eq50} can be rewritten as
\begin{equation}\label{eq51}
A=\bigsqcup_{E_{t,A}^+ \in \mathfrak{E}_t} E_{t,A}^+\sqcup \bigsqcup_{E_{t,A}^- \in \mathfrak{E}_t} E_{t,A}^-.
\end{equation}
Now, from \eqref{eq8} and \eqref{eq9}, we know how to update the lower extended probabilities of all the  elements of the patition $\mathcal{E}$ of $\Omega$, which implies that we know the value of $\underline{P}^{ex}_{t+1}(E_{t+1,A})$, for all $E_{t+1,A} \in \mathfrak{E}_{t+1}$. Then, consider now the summation 
$$\sum_{E_{t+1,A} \in \mathfrak{E}_{t+1}} \underline{P}^{ex}_{t+1}(E_{t+1,A}).$$ 
From equation \eqref{eq55}, we have that 
\begin{align*}
 \underline{P}^{ex}_{t+1}(A) &\geq \sum_{E_{t+1,A} \in \mathfrak{E}_{t+1}} \underline{P}^{ex}_{t+1}(E_{t+1,A})\\ &= \sum_{E_{t+1,A}^+ \in \mathfrak{E}_{t+1}} \underline{P}^{ex}_{t+1}(E^+_{t+1,A}) + \sum_{E_{t+1,A}^- \in \mathfrak{E}_{t+1}} \underline{P}^{ex}_{t+1}(E^-_{t+1,A}).   
\end{align*}
From equation \eqref{eq56}, we can give an upper bound for the upper extended probability of $A$,
\begin{align*}
    \overline{P}^{ex}_{t+1}(A) &\leq \sum_{E_{t+1,A} \in \mathfrak{E}_{t+1}} \overline{P}^{ex}_{t+1}(E_{t+1,A})\\ &= \sum_{E_{t+1,A}^+ \in \mathfrak{E}_{t+1}} \overline{P}^{ex}_{t+1}(E^+_{t+1,A}) + \sum_{E_{t+1,A}^- \in \mathfrak{E}_{t+1}} \overline{P}^{ex}_{t+1}(E^-_{t+1,A}).
\end{align*}

\section{Conclusion}\label{conclusion}
In this work  we give a definition of extended probability measures that does not depend on the field a scholar works in. We give some of their more interesting properties, and a behavioral interpretation to positive and negative values of extended probabilities. We then apply extended probabilities to statistical inference. Given the probability space $(\Omega,\mathcal{F},P)$ associated with the experiment we want to conduct, we use extended probabilities to express uncertainty about both the composition of the state space $\Omega$, and which probability measure $P$ to select. We develop an ex-ante analysis; our method describes how the researcher progressively discovers the true composition of the state space, so that, at the end of the process, a regular statistical analysis (that requires the knowledge of $\Omega$) can take place. We introduce the concept of extended Choquet capacities, and in particular of upper and lower extended probabilities that represent the ``borders'' of sets of extended probabilities, and we give bounds for the lower extended probability of any element $A \in \mathcal{F}$. We also apply our model to the fields of opinion dynamics and species sampling models. This paper is important because it gives a foundational definition of extended probability measures and makes these latter relevant to the field of statistics. 

In the future, we will deal with the possibility that in our ex-ante analysis we are not able to discover the whole composition of the state space associated with our experiment. In that case, the statistical analysis itself will have to be carried out using extended probabilities. We also plan to relax the assumption we made about working with a finite or countable state space. We will also provide direct behavioral motivation for the properties of extended probabilities so as to be able to relate our approach to those of de Finetti \cite{definetti1,definetti2} and Walley \cite{walley}. Furthermore, since it is one of the main benefits of a betting approach, we will study partially specified extended probabilities (that is, extended probabilities defined not on a sigma-algebra, but rather on a generic collection of events) and their coherent extension. Finally, we would like to deepen the study of lower extended probabilities.

\section*{Acknowledgments}\label{ackn}
This work is supported in part by the following grants: NSF CCF-193496, NSF DEB-1840223, NIH R01 DK116187-01, HFSP RGP0051/2017, NIH R21 AG055777-01A, NSF DMS 17-13012, NSF ABI 16-61386, ARO MURI W911NF2010080. The authors would like to thank Mark Burgin, Shounak Chattopadhyay, Roberto Corrao, Vittorio Orlandi, Glenn Shafer, Peter Walley, Marco Zaffalon and Alessandro Zito for helpful comments.


\appendix

\section{Proofs}\label{proofs}
\begin{proof}[Proof of Proposition \ref{prop1}]
Suppose for the sake of contradiction that for some $A=B$, 
\begin{align}\label{eq23}
P^{ex}(A) \neq P^{ex}(B).
\end{align}
Consider then $A \setminus B =  B \setminus B = \emptyset$; we have that
\begin{align}\label{eq11}
P^{ex}(A \setminus B) \neq P^{ex}(B \setminus B)=P^{ex}(\emptyset)=0.
\end{align}
This is true because $A=\{A \setminus B\} \sqcup \{A \cap B\}$ and $B=\{B \setminus B\} \sqcup \{B \cap B\}$; by (ii*), this implies that $P^{ex}(A \setminus B)=P^{ex}(A)-P^{ex}(A \cap B)$ and $P^{ex}(B \setminus B)=P^{ex}(B)-P^{ex}(B \cap B)$. Then, we have that $P^{ex}(A \setminus B) = P^{ex}(B \setminus B)$ if and only if
$$P^{ex}(A)=P^{ex}(B)-P^{ex}(B \cap B)+P^{ex}(A \cap B);$$
but $B \cap B=B$, and $A \cap B=B$ because we assumed $A=B$, so $P^{ex}(A \setminus B) = P^{ex}(B \setminus B)$ if and only if $P^{ex}(A)=P^{ex}(B)$. But by \eqref{eq23} $P^{ex}(A)\neq P^{ex}(B)$, so we have that the inequality in \eqref{eq11} holds.

By assumption we know that $A=B$, so $A \setminus B = \emptyset$, and so $P^{ex}(A\setminus B)=P^{ex}(\emptyset)$. Then, by \eqref{eq11}, we have that $P^{ex}(\emptyset)=P^{ex}(A\setminus B)\neq P^{ex}(B\setminus B)=P^{ex}(\emptyset)$, a contradiction.
\end{proof}

\begin{proof}[Proof of Proposition \ref{prop2}]
Let $A \subset B$; then, we can write $B$ as $B=A \sqcup (B \cap A^c)$. By (ii*) this implies that $P^{ex}(B)=P^{ex}(A)+P^{ex}(B \cap A^c)$. Then, if $P^{ex}(B)$, $P^{ex}(A)$, and $P^{ex}(B \cap A^c)$ are all nonnegative, then $P^{ex}(B)=P^{ex}(A)+P^{ex}(B \cap A^c) \geq P^{ex}(A)$. If instead they are all nonpositive, then $P^{ex}(B)=P^{ex}(A)+P^{ex}(B \cap A^c) \leq P^{ex}(A)$.
\end{proof}

\begin{proof}[Proof of Proposition \ref{prop3ext}]
Pick $A,B \in \mathcal{F}$; if they are disjoint, the equation follows immediately from (ii*). If they are not, consider $A \cup B=\{A \setminus B\} \sqcup \{A \cap B\} \sqcup \{B \setminus A\}$. Then, again by (ii*), we have that 
\begin{align}\label{eq12}
P^{ex}(A \cup B)=P^{ex}(A \setminus B)+P^{ex}(A \cap B)+P^{ex}(B \setminus A)
\end{align}
Notice then that $P^{ex}(A \setminus B)=P^{ex}(A)-P^{ex}(A\cap B)$; indeed
$$P^{ex}(A)=P^{ex}(\{A\cap B\} \sqcup \{A \setminus B\})=P^{ex}(A\cap B)+P^{ex}(A \setminus B).$$
Similarly, $P^{ex}(B \setminus A)=P^{ex}(B)-P^{ex}(B\cap A)$. So, \eqref{eq12} becomes
\begin{align*}
P^{ex}(A \cup B) &= P^{ex}(A)-P^{ex}(A\cap B) + P^{ex}(A\cap B) + P^{ex}(B)-P^{ex}(B\cap A) \\&= P^{ex}(A)+P^{ex}(B)-P^{ex}(A\cap B).
\end{align*}
\end{proof}

\begin{proof}[Proof of Theorem \ref{alw_coh}]
Notice 
that $P^{ex}$ restricted to $\mathscr{B}$ is a finite signed measure. By the Hahn-Jordan decomposition theorem \cite{fischer}, there exists a unique decomposition of $P^{ex}$ into a difference $P^{ex}=P^{ex}_+ - P^{ex}_-$ of two finite positive measures. Pick any $\{B_j\}_{j=1}^n \subset \mathscr{B}$, $\{s_j\}_{j=1}^n \subset \mathbb{R}$. We have that, for any payoff
\begin{align*}
\mathfrak{f}(\omega)&=\sum_{j=1}^n s_j \left[ \mathbbm{1}_{B_j}(\omega) - P^{ex}(B_j) \right]\\
&=\sum_{j=1}^n s_j \left[ \mathbbm{1}_{B_j}(\omega) - P^{ex}_+(B_j)+P^{ex}_-(B_j) \right],
\end{align*}
the following holds
\begin{align*}
\int_\Omega \mathfrak{f} \mathrm{d}P^{ex}&=\sum_{j=1}^n s_j \left[ \int_\Omega \mathbbm{1}_{B_j}\mathrm{d}P^{ex} - P^{ex}_+(B_j)+P^{ex}_-(B_j) \right]\\
&=\sum_{j=1}^n s_j \left[ \int_\Omega \mathbbm{1}_{B_j}\mathrm{d}P^{ex}_+ - \int_\Omega \mathbbm{1}_{B_j}\mathrm{d}P^{ex}_- - P^{ex}_+(B_j)+P^{ex}_-(B_j) \right]\\
&=\sum_{j=1}^n s_j \left[ \int_\Omega \mathbbm{1}_{B_j}\mathrm{d}P^{ex}_+ - P^{ex}_+(B_j) \right] - \sum_{j=1}^n s_j \left[ \int_\Omega \mathbbm{1}_{B_j}\mathrm{d}P^{ex}_- - P^{ex}_-(B_j) \right] = 0.
\end{align*}
So $\mathfrak{f}$ cannot have a negative supremum.
\end{proof}

\begin{proof}[Proof of Proposition \ref{prop4}]
We have that
\begin{align}\label{eq14_bis}
\begin{split}
P^{ex}_t(A) &= P^{ex}_t(A \cap \Omega_t^+) + P^{ex}_t(A \cap \Omega_t^-)\\
&=\sum\limits_{E_{t,j}^+ \in \mathcal{E}_t^+} P^{ex}_t ( A \cap E_{t,j}^+ ) + \sum\limits_{E_{t,j}^- \in \mathcal{E}_t^-} P^{ex}_t ( A \cap E_{t,j}^- )\\
&=\sum\limits_{E_{t,j}^+ \in \mathcal{E}_t^+ : P^{ex}_t(E_{t,j}^+) \neq 0} P^{ex}_t ( A \mid E_{t,j}^+ )P^{ex}_t(E_{t,j}^+)\\ &+ \sum\limits_{E_{t,j}^- \in \mathcal{E}_t^- : P^{ex}_t(E_{t,j}^-) \neq 0} P^{ex}_t ( A \mid E_{t,j}^- )P^{ex}_t(E_{t,j}^-),
\end{split}
\end{align}
where the first equality comes from $A=(A \cap \Omega_t^+)\sqcup(A \cap \Omega_t^-)$ and (ii*), the third equality comes from \eqref{eq13}, and the second equality comes from $\Omega^+_t = \sqcup_{E_{t,j}^+ \in \mathcal{E}_t^+} E_{t,j}^+$, so $$A \cap \Omega^+_t = A \cap (\sqcup_{E_{t,j}^+ \in \mathcal{E}_t^+} E_{t,j}^+ )=\sqcup_{E_{t,j}^+ \in \mathcal{E}_t^+} (A \cap E_{t,j}^+),$$
where the last equality comes from De Morgan laws.
\end{proof}

\begin{proof}[Proof of Proposition \ref{prop7}]
Pick any $A,B \in\mathcal{F}$. Since $\Omega_t^+ \cap \Omega^-_t=\emptyset$, we have that
\begin{align*}
A\cup B &= ((A\cup B)\cap \Omega^+_t)\sqcup ((A\cup B)\cap \Omega^-_t)\\
&= ((A\cap \Omega^+_t)\cup(B\cap \Omega^+_t)) \sqcup ((A\cap \Omega^-_t)\cup(B\cap \Omega^-_t))\\
&=(A^+_t \cup B^+_t)\sqcup (A^-_t \cup B^-_t).
\end{align*}
A similar argument shows that $A \cap B=(A^+_t \cap B^+_t)\cup (A^-_t \cap B^-_t)$, for all $t$.

For the second part, notice that $A \setminus B=\{\omega \in \Omega : \omega \in A \text{, } \omega \not\in B\}$. Then, $A \setminus B \cap  \Omega^+_t \equiv A^+_t \setminus B^+_t=\{\omega \in \Omega_t^+ : \omega \in A\text{, } \omega \not\in B\}$; similarly, $A \setminus B \cap  \Omega^-_t \equiv A^-_t \setminus B^-_t=\{\omega \in \Omega_t^- : \omega \in A, \omega \not\in B\}$. But $\Omega_t^+ \sqcup \Omega_t^-=\Omega$, so the claim follows.
\end{proof}

\begin{proof}[Proof of Proposition \ref{prop8}]
Let us focus on $\mathcal{F}_t^+$; it is closed with respect to countable intersections because it is a sigma-algebra. Then, pick $A,B \in \mathcal{F}^+_t$ such that $A \neq B$. If $A \cap B = \emptyset$, then $A\setminus B=A \in \mathcal{F}^+_t$; if $A \cap B \neq \emptyset$, then $A\setminus B=A \cap B^c$. Then, $B \in \mathcal{F}^+_t$ implies $B^c \in \mathcal{F}^+_t$ because $\mathcal{F}_t^+$ is a sigma-algebra; also, $A \cap B^c \in \mathcal{F}_t^+$ because $\mathcal{F}_t^+$ is closed with respect to countable intersections. So $A\setminus B \in \mathcal{F}_t^+$. Finally, the unit element is $\Omega^+_t$: for any $A \in \mathcal{F}^+_t$, $A \cap \Omega^+_t=A$, because $A \in\mathcal{F}^+_t$. Hence $\mathcal{F}^+_t$ is a set algebra. We show in a similar fashion that $\mathcal{F}^-_t$ is a set algebra as well.
\end{proof}

\begin{proof}[Proof of Proposition \ref{prop9}]
To ease notation, let $C \equiv \cup_{j \in \mathbb{N}_0} A_j$. $C$ belongs to $\mathcal{F}$, because $\mathcal{F}$ is a sigma-algebra. Then, let $C \in\mathcal{F}_t^+$. This implies that $P^{ex}_t(C) \geq 0$, but also that $P^{ex}(A) \geq 0$ and that $P^{ex}(C \cap A^c) \geq 0$, since $C=A \sqcup (C \cap A^c)$. Then, 
$$P^{ex}_t(A) \leq P^{ex}_t(C) \leq \sum_{j \in \mathbb{N}_0}P^{ex}_t(A_j),$$
where the first inequality comes from Proposition \ref{prop2}, and the second one from Proposition \ref{prop3ext}.

If $C \in\mathcal{F}_t^-$, then $P^{ex}_t(C) \leq 0$, and also that $P^{ex}(A) \leq 0$ and that $P^{ex}(C \cap A^c) \leq 0$. Then, 
$$P^{ex}_t(A) \geq P^{ex}_t(C) \geq \sum_{j \in \mathbb{N}_0}P^{ex}_t(A_j),$$
where again the first inequality comes from Proposition \ref{prop2}, and the second one from Proposition \ref{prop3ext}.

If $C\equiv \cup_{j \in \mathbb{N}_0} A_j$ is such that $C\cap \Omega_t^+\neq \emptyset \neq C\cap \Omega_t^-$, then we cannot say anything general about the relation between $P^{ex}_t(A)$ and $\sum_{j \in \mathbb{N}_0}P^{ex}_t(A_j)$.
\end{proof}

\begin{proof}[Proof of Proposition \ref{prop11}]
Notice that by \eqref{guidance_p} and\eqref{guidance_p2}, we have that, for all $t$,
\begin{equation*}
P^{ex}_t(A\cap\Omega^+_t)=P(A\cap\Omega^+_t)
\end{equation*}
and
\begin{equation*}
P^{ex}_t(A\cap\Omega^-_t)=-P(A\cap\Omega^-_t),
\end{equation*}
for all $A \in \mathcal{F}$, where $P \in \Delta(\Omega,\mathcal{F})$. Now, to save some notation, let us denote by $\Omega^+ \equiv \cup_{t \in \mathbb{N}_0} \Omega_t^+$ and by $\Omega^- \equiv \cap_{t \in \mathbb{N}_0} \Omega_t^-$. Then, we have the following.
\begin{align*}
d_{ETV}(P^{ex}_t,P^{ex}_\infty) &= \sup_{A \in \mathcal{F}} |P^{ex}_t(A)-P^{ex}_\infty(A)| \equiv |P^{ex}_t(\mathbf{A})-P^{ex}_\infty(\mathbf{A})|.
\end{align*}
Let us denote by $\mathbf{A}^+ \equiv \mathbf{A} \cap \Omega^+$, and by $\mathbf{A}^- \equiv \mathbf{A} \cap \Omega^-$, so $\mathbf{A}=\mathbf{A}^+ \sqcup \mathbf{A}^-$. Notice that even though in the limit we fully discover $\mathbf{A}^+$, there may be some $t \in \mathbb{N}_0$ such that $\mathbf{A}^+ \cap \Omega^+_t \neq \emptyset \neq \mathbf{A}^+ \cap \Omega^-_t$. So we have 
\begin{align}
|P^{ex}_t(\mathbf{A})-P^{ex}_\infty(\mathbf{A})| &= |P^{ex}_t(\mathbf{A}^+)+P^{ex}_t(\mathbf{A}^-)-P^{ex}_\infty(\mathbf{A}^+)-P^{ex}_\infty(\mathbf{A}^-)| \label{from_eq_2}\\
&=|P^{ex}_t(\mathbf{A}^+)-P^{ex}_\infty(\mathbf{A}^+)| \label{alw_latent}\\
&=|P(\mathbf{A}^+ \cap \Omega^+_t)-P(\mathbf{A}^+ \cap \Omega^-_t)-P(\mathbf{A}^+ \cap \Omega^+_t)-P(\mathbf{A}^+ \cap \Omega^-_t)| \label{by_constr}\\
&=2 P(\mathbf{A}^+ \cap \Omega^-_t) \nonumber.
\end{align}
Equation \eqref{from_eq_2} comes from $\mathbf{A}=\mathbf{A}^+ \sqcup \mathbf{A}^-$ and (ii*). Equation \eqref{alw_latent} comes from the fact that, for all $t$, $P^{ex}_t(\mathbf{A}^-)=P^{ex}_\infty(\mathbf{A}^-)$, because $\mathbf{A}^-$ is the portion of $\mathbf{A}$ that never leaves the latent space. Equation \eqref{by_constr} comes from the updating procedure described in Section \ref{update}, from the countable additivity of $P$, and from $\mathbf{A}^+=(\mathbf{A}^+ \cap \Omega^+_t)\sqcup(\mathbf{A}^+ \cap \Omega^-_t)$. 

Now, notice that the limit as $t \rightarrow \infty$ of $\mathbf{A}^+ \cap \Omega^-_t$ is
$$\mathbf{A}^+ \cap \bigcap_{t \in \mathbb{N}_0}\Omega^-_t=\mathbf{A}^+ \cap \Omega^- =\emptyset,$$
where the last equality is by construction. Then, by the continuity of $P$ we have that 
$$\lim_{t \rightarrow \infty} d_{ETV}(P^{ex}_t,P^{ex}_\infty)=2\lim_{t \rightarrow \infty}  P(\mathbf{A}^+ \cap \Omega^-_t)=2P(\emptyset)=0,$$
which concludes the proof.
\end{proof}

\begin{proof}[Proof of Proposition \ref{prop10}]
Suppose that $P^{ex}_\infty(\Omega)\neq 1$. This of course implies that $P^{ex}_\infty(\Omega)< 1$ because, from the definition of extended probabilities, $P^{ex}_\infty(A)\in [-1,1]$, for all $A \in \mathcal{F}$. Then, this means that there exists a set $A \in \mathcal{F}$ such that $P^{ex}_\infty(A) \leq 0$, which implies $A \subset \Omega^-_\infty$. But we know that, if $\Omega^+_t \uparrow \Omega$, then $\Omega^-_t \downarrow \Omega^-_\infty=\emptyset$, which contradicts $A \subset \Omega^-_\infty$. 
\end{proof}

\begin{proof}[Proof of Theorem \ref{lepc}]
Pick any lower extended probability $\underline{P}^{ex}$ and suppose there exists $\emptyset \neq \mathcal{P}^{ex} \subset \Delta^{ex}(\Omega,\mathcal{F})$ such that $\underline{P}^{ex}(A)=\inf_{P^{ex} \in \mathcal{P}^{ex}}P^{ex}(A)$, for all $A \in \mathcal{F}$. Now, by Theorem \ref{alw_coh}, we know that $P^{ex}$ is coherent, for all $P^{ex} \in \mathcal{P}^{ex}$. This means that for all $\{B_j\}_{j=1}^n \subset \mathscr{B}^\prime$, all $\{s_j\}_{j=1}^n \subset \mathbb{R}$, and all $P^{ex} \in \mathcal{P}^{ex}$,
\begin{align}\label{interm_coh}
\sup_{\omega\in\Omega}\left\lbrace{\sum_{j=1}^n s_j \left[ \mathbbm{1}_{B_j}(\omega) - P^{ex}(B_j) \right]}\right\rbrace \geq 0.
\end{align}
Notice that we use $\mathscr{B}^\prime$ because that is the sigma-algebra generated by the events we can enter a bet about. Then, \eqref{interm_coh} entails that
\begin{align}\label{interm_coh2}
\sup_{\omega\in\Omega} \left[\sum_{j=1}^n s_j \mathbbm{1}_{B_j}(\omega)\right] \geq \sum_{j=1}^n s_j  P^{ex}(B_j).
\end{align}
The inequality in \eqref{interm_coh2} implies that 
\begin{align}\label{interm_coh3}
\begin{split}
\sup_{\omega\in\Omega} \left[\sum_{j=1}^n s_j \mathbbm{1}_{B_j}(\omega)\right] &\geq \inf_{P^{ex} \in \mathcal{P}^{ex}} \sum_{j=1}^n s_j  P^{ex}(B_j)\\
&\geq \sum_{j=1}^n s_j  \inf_{P^{ex} \in \mathcal{P}^{ex}} P^{ex}(B_j)\\
&=\sum_{j=1}^n s_j  \underline{P}^{ex}(B_j).
\end{split}
\end{align}
The results in \eqref{interm_coh3} hold if and only if $\sup_{\omega\in\Omega}\{\sum_{j=1}^n s_j \left[ \mathbbm{1}_{B_j}(\omega) - \underline{P}^{ex}(B_j) \right]\} \geq 0$. The claim follows.
\end{proof}


\begin{proof}[Proof of Corollary \ref{cor_coh}]
Pick a generic lower extended probability $\underline{P}^{ex}$, and suppose $\text{core}(\underline{P}^{ex})$ is nonempty. Then, $\underline{P}^{ex}$ avoids sure loss by Theorem \ref{lepc}. 
\end{proof}


\begin{proof}[Proof of Proposition \ref{core_cpct}]
We first show that core$(\underline{P}^{ex})$ is convex. Pick any $P^{ex}_1,P^{ex}_2$ in the core of $\underline{P}^{ex}$, any $\alpha \in (0,1)$, and any $A \in \mathcal{F}$. We have
$$\alpha P_1^{ex}(A)+(1-\alpha)P_2^{ex}(A) \geq \alpha \underline{P}^{ex}(A) + (1-\alpha) \underline{P}^{ex}(A)=\underline{P}^{ex}(A),$$
so $\alpha P_1^{ex}+(1-\alpha)P_2^{ex} \in \text{core}(\underline{P}^{ex})$.

We then show that core$(\underline{P}^{ex})$ is  weak$^\star$-compact. Recall that, in the weak$^\star$ topology, a net $(P^{ex}_\alpha)_{\alpha \in I}$ converges to $P^{ex}$ if and only if $P^{ex}_\alpha(A) \rightarrow P^{ex}(A)$, for all $A \in \mathcal{F}$. This proof is similar to the proof of \cite[Proposition 3]{marinacci_ambig}, where the authors prove the same claim for the core of a bounded game. Pick any $P^{ex} \in \text{core}(\underline{P}^{ex})$, and let $k:=2\sup_{A \in \mathcal{F}} |\underline{P}^{ex}(A)|$. For any $A \in \mathcal{F}$, it holds that $P^{ex}(A) \geq \underline{P}^{ex}(A) \geq -k$. On the other hand, for all $A \in \mathcal{F}$, we have that
$$P^{ex}(A)=P^{ex}(\Omega)-P^{ex}(A^c) \leq \underline{P}^{ex}(\Omega) - \underline{P}^{ex}(A^c) \leq 2 \sup_{A \in \mathcal{F}} |\underline{P}^{ex}(A)|.$$
This implies that $|P^{ex}(A)|\leq k$, for all $A \in \mathcal{F}$. By \cite[Page 94]{dunford}, we have that
\begin{equation}\label{eq_proof_core}
\|P^{ex}\|:=\sup \sum_{j=1}^n \left| P^{ex}(A_j)-P^{ex}(A_{j-1}) \right| \leq 2k,
\end{equation}
where the supremum is taken over all finite chains $\emptyset=A_0 \subset A_1 \subset \cdots \subset A_n=\Omega$. Then, \eqref{eq_proof_core} implies that 
$$\text{core}(\underline{P}^{ex}) \subset \left\lbrace{P^{ex} \in \Delta^{ex}(\Omega,\mathcal{F}):\|P^{ex}\| \leq 2k}\right\rbrace.$$
By the Alaoglu Theorem \cite[Theorem 2, Page 424]{dunford}, we know that $\{P^{ex} \in \Delta^{ex}(\Omega,\mathcal{F}):\|P^{ex}\| \leq 2k\}$ is weak$^\star$-compact. Hence, to complete the proof, we are left to show that core$(\underline{P}^{ex})$ is  weak$^\star$-closed. Let then $(P^{ex}_\alpha)_{\alpha \in I}$ be a net in core$(\underline{P}^{ex})$ that weak$^\star$-converges to $P^{ex} \in \Delta^{ex}(\Omega,\mathcal{F})$. Using the properties of the weak$^\star$ topology, it is easy to see that $P^{ex} \in \text{core}(\underline{P}^{ex})$. Hence, $\text{core}(\underline{P}^{ex})$ is weak$^\star$-closed. The claim follows.
\end{proof}

\begin{proof}[Proof of Proposition \ref{prop12}]
Equation \eqref{eq8} comes from the fact that, for all $t$, $${P}_{t+1}^{ex}(E^+_{t+1,\omega})=|{P}_t^{ex}(E^-_{t,\omega})|,$$
that is, ${P}_{t+1}^{ex}(E^+_{t+1,\omega})=-{P}_t^{ex}(E^-_{t,\omega})$, and that
$$-\overline{P}_t^{ex}(E^-_{t,\omega}) \leq -{P}_t^{ex}(E^-_{t,\omega}) \leq -\underline{P}_t^{ex}(E^-_{t,\omega}). $$
\end{proof}

\section{Application to opinion dynamics}\label{opdin}
This example comes from the model in \cite{allah}. There, opinion dynamics between a persuaded and a persuading agents is studied, in particular when the persuaded agent evaluates new information in a way that is consistent with her own preexisting belief. In this example we are going to adoperate extended probabilities to model the boomerang effect. This phenomenon corresponds to the empirical observation that sometimes persuasion yields the opposite effect: the persuaded agents moves her opinion away from the opinion of the persuading agent. That is, she enforces her old opinion. 

In \cite{allah}, the authors assume that the state of the world does not change, that the agents are aware of this fact, and that the persuaded agent changes her opinion only under the influence of the opinion of the persuading agent. They model this dynamic in a linear fashion. The first iteration is the following
\begin{align}\label{eq30}
\hat{P}_1(\{\omega_k\})=\epsilon P_0(\{\omega_k\})+(1-\epsilon)Q(\{\omega_k\}), \quad \epsilon \in [0,1], 
\end{align}
where $\omega_k \in \Omega$, a finite sample space, and $P_0$ and $Q$ are regular probability measures that represent the initial opinions of the persuaded and the persuading agents, respectively. $Q$ is not indexed to time because they make the simplifying assumption that the persuading agent does not change his mind: he tries to persuade the other agent of the same thing at every iteration. $\epsilon$ is a weight, and several qualitative factors contribute to its subjective assessment: egocentric attitude of the persuaded agent, the fact that the persuaded agent has access to internal reasons for choosing her opinion, while she is not aware of the internal reasons of the persuading agent, and many more. The authors relate $\epsilon$ to the credibility of the persuading agent: the higher $\epsilon$, the less credible he is. The successive iterations are modeled as follows
\begin{align}\label{eq38}
\hat{P}_{t+1}(\{\omega_k\})=\epsilon \hat{P}_t(\{\omega_k\})+(1-\epsilon)Q(\{\omega_k\}), 
\end{align}
for $t \geq 1$. This means that at every iteration, the persuading agent tries to shift the persuaded agent's opinion closer to his own. This continues until either the persuaded agent is content with her opinion (and hence does not further change her beliefs), or the persuading agent completely convinces the other agent.

Now, one way to model the boomerang effect is to consider $\epsilon>1$. This conveys the idea that the persuading agent has an extremely low credibility. This results in the updated opinion of the persuaded agent to be an extended probability measure. Indeed, for $\epsilon>1$, $\hat{P}_{t+1}(\{\omega_k\})$ is negative whenever $\epsilon \hat{P}_t(\{\omega_k\}) < |1-\epsilon| Q(\{\omega_k\})$. In \cite{allah} the authors consider the induced regular probability measure to avoid working with extended probabilities.

We modify slightly the linear model in \cite{allah}. The main differences are  three: we allow the  use of extended probabilities, we describe a state space that is divided in latent and known, and whose composition is gradually discovered as the time passes by, and we do not let  $\epsilon$ be a free parameter. It depends on both the element $\omega_k$ of the state space  we examine and on the iteration $t$ we are considering.

Mathematically, we can describe the low credibility of the persuading agent through hidden states:
the persuaded agent may think that she does not know the state space well enough, that is, that there are some hidden portions of $\Omega$ she is not (yet) aware of.


We assume that the state space $\Omega=\{\omega_1,\ldots,\omega_N\}$ is a finite set (a common simplifying assumption in opinion dynamics). Notice that the finest possible partition of $\Omega$ is $\mathcal{E}=\{\{\omega_j\}\}_{j=1}^N$. At the beginning of the interaction between persuading and persuaded agents, the former is fully aware of the composition of the state space, while the latter is only aware of the composition of $\Omega^+_0 \subsetneq \Omega$, but suspects that the actual state space is larger. This corresponds to having suspects on the persuading agent hiding some pieces of information. In particular, she correctly guesses that the true state space is $\Omega$. This correct guess is without loss of generality for our analysis: we are in the $\Omega_t^+ \uparrow \Omega$ case; we also assume that the discovering procedure is equivalent to an urn without replacement. She then defines an extended probability on $\Omega$ the way we explained in section \ref{main}. That is, she specifies the oracle probability distribution $P$ on the whole $\Omega$ and then flips the sign to the probabilities of the latent events: $P^{ex}_0(\{\omega_k\})=P(\{\omega_k\})$ if $\omega_k \in \Omega^+_0$, and $P^{ex}_0(\{\omega_k\})=-P(\{\omega_k\})$ if $\omega_k \not\in \Omega^+_0$.  

To obtain the influenced extended probability at any time $t \geq 0$, we modify slightly equation \eqref{eq38} to get
\begin{align}\label{eq33}
\hat{P}^{ex}_{t}(\{\omega_k\})=\epsilon_{k,t} P^{ex}_t(\{\omega_k\})+(1-\epsilon_{k,t})Q(\{\omega_k\}).
\end{align} 
The nonnegative $\epsilon_{k,t}$'s have to be chosen such that $\hat{P}^{ex}_{t}$ is an extended probability measure, that is, $\hat{P}^{ex}_{t}(\{\omega_k\}) \in [-1,1]$ for all $k$, for all $t$, and 
$$\hat{P}^{ex}_{t}(\Omega)=\sum_{k=1}^N \hat{P}^{ex}_{t}(\{\omega_k\}) \leq 1.$$
Here, $\hat{P}^{ex}_{t}$ denotes the influenced extended probability at time $t$. Notice that $\hat{P}^{ex}_{t}$ is analytically similar to an $\epsilon$-contaminated probability measure. There are of course two major differences: for some $\omega_k$, $\epsilon_{k,t}$ is greater than $1$, and also one of the elements of the mixture is an extended probability measure (rather than a regular one). 
Notice also that $Q$ is not indicized to time because we too make the simplifying assumption that the persuading agent does not change his mind. Another characteristic worth noting is that in our model, at every iteration $t$, the persuaded agent combines her {updated} belief (expressed via the extended probability $P^{ex}_t$) with the other agent's belief to obtain $\hat{P}^{ex}_t$. This is different from the model in \cite{allah} where at every iteration $t$ the persuaded agent combines her {influenced} belief at iteration $t-1$ (expressed through $\hat{P}_{t-1}$) with the other agent's belief to obtain $\hat{P}_t$. This because in their model the world does not change, so the only way of describing an opinion dynamics is the one the authors illustrate.

The persuaded agent updates $P^{ex}_t$ as specified in section \ref{main}. That is, when at time $t$ she observes $\omega_k$ that used to belong to the latent space, $P^{ex}_t(\{\omega_k\})=|P^{ex}_{t-1}(\{\omega_k\})|$, while the extended probabilities for $\omega_s \neq \omega_k$ are kept constant. Let us be more precise about the differences with equation \eqref{eq30}; $\epsilon_{k,t}$ depends on both $k$ and $t$. The persuaded agent has a different perception of the opinion of the persuading agent depending on whether she is not sure the topic they are debating about belongs to the state space $\Omega$, so for $\omega_k$ in the latent space, $\epsilon_{k,t}$ is greater than $1$. In addition, as time passes by, the hidden elements of the state space become known, so (part of) the credibility of the persuading agent is restored. The $\epsilon_{k,t}$ associated with $\omega_k$ observed at time $t$ becomes smaller than $1$, for all $\omega_k$. 

Notice that, for $t \geq N$, $P^{ex}_t$ is a regular probability measure, because the persuaded agent discovers the composition of the whole state space, so the latent space shrinks to the empty set.

Throughout this section we made the tacit assumption that $\mathcal{P}$, the set of probability measures on $\Omega$ that induces the set of extended probabilities $\mathcal{P}^{ex}_0$ at time $t=0$, is the singleton $\{P\}$, so that $\mathcal{P}^{ex}_0=\{P^{ex}_0\}$. This simplifying assumption can be dropped, and the analysis stays the same. The only difference is that we have to repeat it for all the elements of $\mathcal{P}^{ex}_t$, for all $t \in \mathbb{N}_0$. Every set $\mathcal{P}^{ex}_t$ of extended probabilities induces a set $\hat{\mathcal{P}}^{ex}_t$ of influenced extended probabilities. For all $t \geq N$,  $\mathcal{P}^{ex}_t$ is a set of regular probability measures, and $\hat{\mathcal{P}}^{ex}_t$ is a set of influenced regular probability measures.

\section{DeFinettian interpretation of subjective probability}\label{definetti}

De Finetti admits that it might have been better to adopt the seemingly more general approach of Ramsey and Savage of defining bets whose payoffs are in utils \cite[Page 79]{definetti1}: 

\textit{
The formulation [...] could be made watertight [...] by working in terms of the utility instead of with monetary value. This would undoubtedly be the best course from
the theoretical point of view, because one could construct, in an integrated fashion,
a theory of decision-making [...] whose meaning would be unexceptionable from
an economic viewpoint, and which would establish simultaneously and in parallel
the properties of probability and utility on which it depends. 
}

Nevertheless, he found ``other reasons for preferring'' the money
bet approach \cite[Page 81]{definetti1}:

\textit{
The main motivation lies in being able to refer, in a natural way to combinations of
bets, or any other economic transactions, understood in terms of monetary value
(which is invariant). If we referred ourselves to the scale of utility, a transaction
leading to a gain of amount $S$ if the event $E$ occurs would instead appear as
a variety of different transactions, depending on the outcome of other random
transactions. These, in fact, cause variations in one’s fortune, and therefore in
the increment of utility resulting from the possible additional gain $S$: conversely,
suppose that in order to avoid this one tried to consider bets, or economic transactions, expressed, let us say, in ``utiles'' (units of utility, definable as the increment
between two fixed situations). In this case, it would be practically impossible to
proceed with the transactions, because the real magnitudes in which they have to
be expressed (monetary sums or quantities of goods, etc.) would have to be adjusted to the continuous and complex variations in a unit of measure that nobody
would be able to observe. 
}

\bibliographystyle{plain}
\bibliography{References}

\begin{thebibliography}{10}

\bibitem{allah}
Armen~E. Allahverdyan and Aram Galstyan.
\newblock Opinion dynamics with confirmation bias.
\newblock {\em {PLoS One}}, 9(7), 2014.

\bibitem{allen}
Edward~H. Allen.
\newblock Negative probabilities and the uses of signed probability theory.
\newblock {\em Philosophy of Science}, 43(1):53--70, 1976.

\bibitem{bartlett}
Maurice~Stevenson Bartlett.
\newblock Negative probability.
\newblock {\em Mathematical Proceedings of the Cambridge Philosophical
  Society}, 41:71--73, 1945.

\bibitem{benavoli}
Alessio Benavoli, Alessandro Facchini, and Marco Zaffalon.
\newblock Computational complexity and the nature of quantum mechanics
  (extended version).
\newblock {\em Available at
  \href{https://arxiv.org/abs/1902.03513}{arXiv:1902.03513}}, 2019.

\bibitem{benavoli2}
Alessio Benavoli, Alessandro Facchini, and Marco Zaffalon.
\newblock The weirdness theorem and the origin of quantum paradoxes.
\newblock {\em Foundations of Physics}, 51(95), 2021.

\bibitem{berger}
James~O. Berger.
\newblock The robust {B}ayesian viewpoint.
\newblock In Joseph~B. Kadane, editor, {\em Robustness of Bayesian Analyses}.
  Amsterdam : North-Holland, 1984.

\bibitem{blass}
Andreas Blass and Yuri Gurevich.
\newblock Negative probabilities: What they are and what they are for.
\newblock {\em Available at
  \href{https://arxiv.org/abs/2009.10552}{arXiv:2009.10552}}, 2022.

\bibitem{bollen}
Kenneth~A. Bollen.
\newblock Latent variables in psychology and the social sciences.
\newblock {\em Annual Review of Psychology}, 53:605--634, 2002.

\bibitem{burgin1}
Mark Burgin.
\newblock Integrating random properties and the concept of probability.
\newblock {\em Integration: Mathematical Theory and Applications},
  3(2):137--181, 2012.

\bibitem{burgin2}
Mark Burgin.
\newblock Negative probability in the framework of combined probability.
\newblock {\em Available at
  \href{https://arxiv.org/abs/1306.1166}{arXiv:1306.1166}}, 2013.

\bibitem{burgin3}
Mark Burgin and Gunter Meissner.
\newblock Negative probabilities in modeling random financial processes.
\newblock {\em Integration: Mathematical Theory and Applications},
  2(3):305--322, 2011.

\bibitem{prob.kin}
Michele Caprio and Ruobin Gong.
\newblock Dynamic precise and imprecise probability kinematics.
\newblock {\em Available on
  \href{https://arxiv.org/abs/2110.04382}{arXiv:2110.04382}}, 2022.

\bibitem{prob.kin.ergo}
Michele Caprio and Sayan Mukherjee.
\newblock Ergodic theorems for dynamic imprecise probability kinematics.
\newblock {\em Available on
  \href{https://arxiv.org/abs/2003.06502}{arXiv:2003.06502}}, 2022.

\bibitem{definetti3}
Bruno de~Finetti.
\newblock La pr\'evision : ses lois logiques, ses sources subjectives.
\newblock {\em Annales de l'institut Henri Poincar\'e}, 7(1):1--68, 1937.

\bibitem{definetti1}
Bruno de~Finetti.
\newblock {\em {Theory of Probability}}, volume~1.
\newblock New York : Wiley, 1974.

\bibitem{definetti2}
Bruno de~Finetti.
\newblock {\em {Theory of Probability}}, volume~2.
\newblock New York : Wiley, 1975.

\bibitem{dempster}
Arthur~Pentland Dempster.
\newblock Upper and lower probabilities induced by a multivalued mapping.
\newblock {\em The Annals of Mathematical Statistics}, 38(2):325--339, 1967.

\bibitem{dirac}
Paul A.~M. Dirac.
\newblock Note on exchange phenomena in the {T}homas atom.
\newblock {\em Mathematical Proceedings of the Cambridge Philosophical
  Society}, 26:376--395, 1930.

\bibitem{dunford}
Nelson Dunford and Jacob~T. Schwartz.
\newblock {\em Linear operators, part {I}: general theory}.
\newblock London : Wiley Interscience, 1958.

\bibitem{ellsberg}
Daniel Ellsberg.
\newblock Risk, ambiguity, and the {S}avage axioms.
\newblock {\em The Quarterly Journal of Economics}, 75(4):643--669, 1961.

\bibitem{feintzeig}
Benjamin~H. Feintzeig and Samuel~C. Fletcher.
\newblock On noncontextual, non-{K}olmogorovian hidden variable theories.
\newblock {\em Foundations of Physics}, 47:294---315, 2017.

\bibitem{ferrie}
Christopher Ferrie.
\newblock Quasi-probability representations of quantum theory with applications
  to quantum information science.
\newblock {\em Reports on Progress in Physics}, 74(11):116001, 2011.

\bibitem{fischer}
Tom Fischer.
\newblock Existence, uniqueness, and minimality of the {J}ordan measure
  decomposition.
\newblock {\em Available at
  \href{https://arxiv.org/abs/1206.5449}{arXiv:1206.5449}}, 2012.

\bibitem{gell-mann}
Murray Gell-Mann and James~B. Hartle.
\newblock Decoherent histories quantum mechanics with one ``real'' fine-grained
  history.
\newblock {\em Physical Review A}, 85:062120, 2012.

\bibitem{gong2}
Ruobin Gong.
\newblock {Modeling uncertainty with sets of probabilities.}
\newblock In {\em Foundations of Probability seminar series}. {Rutgers
  University}, 2018.

\bibitem{vovk}
Yuri Gurevich and Vladimir Vovk.
\newblock Betting with negative probabilities.
\newblock {\em The Game-Theoretic Probability and Finance Project,
  \href{http://probabilityandfinance.com/articles/58.pdf}{Working Paper \#58}},
  2021.

\bibitem{hartle1}
James~B. Hartle.
\newblock Linear positivity and virtual probability.
\newblock {\em Phisical Review A}, 70:022104, 2004.

\bibitem{hartle2}
James~B. Hartle.
\newblock Quantum mechanics with extended probabilities.
\newblock {\em Physical Review A}, 78:012108, 2008.

\bibitem{heisenberg}
Werner~Karl Heisenberg.
\newblock {\"U}ber die inkoh{\"a}rente {S}treuung von {R}{\"o}ntgenstrahlen.
\newblock {\em Physikalische {Z}eitschrift}, 32:737--740, 1931.

\bibitem{hu}
Yingyao Hu.
\newblock The econometrics of unobservables: Applications of measurement error
  models in empirical industrial organization and labor economics.
\newblock {\em Journal of Econometrics}, 200(2):154--168, 2017.

\bibitem{jarrow}
Robert~A. Jarrow and Stuart~M. Turnbull.
\newblock Pricing derivatives on financial securities subject to credit risk.
\newblock {\em Journal of Finance}, L(1):53--85, 1995.

\bibitem{khrennikov}
Andrei Khrennikov.
\newblock {\em Interpretations of probability}.
\newblock Berlin/New York : Walter de Gruyter, 2009.

\bibitem{kronz}
Fred Kronz.
\newblock Non-monotonic probability theory and photon polarization.
\newblock {\em Journal of Philosophical Logic}, 36:449--472, 2007.

\bibitem{lowe}
David Lowe.
\newblock Machine learning, uncertain information, and the inevitability of
  negative `probabilities'.
\newblock {\em Available at
  \href{http://videolectures.net/mlws04_lowe_mluii/}{Machine Learning Workshop
  2004}}, 2004/2007.

\bibitem{marinacci_ambig}
Massimo Marinacci and Luigi Montrucchio.
\newblock Introduction to the mathematics of ambiguity.
\newblock In Itzhak Gilboa, editor, {\em Uncertainty in economic theory: a
  collection of essays in honor of David Schmeidler's 65th birthday}. London :
  Routledge, 2004.

\bibitem{nau}
Robert~F. Nau.
\newblock De {F}inetti was right: Probability does not exist.
\newblock {\em Theory and Decision}, 51:89--124, 2001.

\bibitem{rabe}
Sophia Rabe-Hesketh and Anders Skrondal.
\newblock Classical latent variable models for medical research.
\newblock {\em Statistical Methods in Medical Research}, 17(1):5--32, 2008.

\bibitem{ramsey}
Frank~P. Ramsey.
\newblock {Truth and Probability}.
\newblock In Henry E.~Kyburg Jr. and Howard~E. Smokler, editors, {\em Studies
  in Subjective Probability}, pages 61--92. New York : Wiley, 1964.

\bibitem{rao}
Kopparty P. S.~Bhaskara Rao and Marepalli~Bhaskara Rao.
\newblock {\em Theory of charges, a study of finitely additive measures}.
\newblock London : Academic Press, 1983.

\bibitem{savage}
Larry~J. Savage.
\newblock {\em The Foundations of Statistics}.
\newblock New York : John Wiley and Sons, 1954.

\bibitem{szekeley}
G{\'a}bor~J. Sz{\'e}kely.
\newblock Half of a coin: Negative probabilities.
\newblock {\em Wilmott Magazine}, pages 66--68, 2005.

\bibitem{tijms}
Henk~C. Tijms and Koen Staats.
\newblock Negative probabilities at work in the {M/D/1} queue.
\newblock {\em Probability in the Engineering and Informational Sciences},
  21(1):67--76, 2007.

\bibitem{tsitsiklis}
John Tsitsiklis.
\newblock {Sample spaces}.
\newblock In {\em Introduction to Probability (online course)}. {Massachusetts
  Institute of Technology}, 2018.

\bibitem{walley}
Peter Walley.
\newblock {\em Statistical reasoning with imprecise probabilities}, volume~42
  of {\em Monographs on Statistics and Applied Probability}.
\newblock London : Chapman and Hall, 1991.

\bibitem{walley_bom}
Peter Walley.
\newblock Inferences from multinomial data: Learning about a bag of marbles.
\newblock {\em Journal of the Royal Statistical Society. Series B
  (Methodological)}, 58(1):3--57, 1996.

\bibitem{wiewiora}
Eric~Walter Wiewiora.
\newblock {\em Modeling probability distributions with predictive state
  representations}.
\newblock PhD thesis, University of California, San Diego, 2008.

\bibitem{wigner}
Eugene~Paul Wigner.
\newblock On the quantum correction for thermodinamic equilibrium.
\newblock {\em Physical Review}, 40:749--759, 1932.

\end{thebibliography}
\end{document}